\numberwithin{equation}{section}
\newtheorem{theorem}{Theorem}[section]
\newtheorem{proposition}[theorem]{Proposition}
\newtheorem{lemma}[theorem]{Lemma}
\newtheorem{corollary}[theorem]{Corollary}
\newcommand{\V}{\operatorname{Var}}
\newcommand{\C}{\operatorname{Cov}}
\def\R{\mathbb{R}}
\def\N{\mathbb{N}}
\def\Z{\mathbb{Z}}
\def\P{\mathbb{P}}
\def\1{\mathbf{1}}
\def\cX{{\mathcal X}}
\def\I{\1}
\DeclareMathOperator{\card}{card}
\newcommand{\BP}{\mathbb{P}}
\newcommand{\BE}{\mathbb{E}}
\newcommand{\BQ}{\mathbb{Q}}
\newcommand{\BM}{\mathbb{M}}
\newcommand{\bN}{{\mathbf N}}
\newcommand{\bX}{{\mathbf X}}
\newcommand{\bY}{{\mathbf Y}}
\newcommand{\bM}{{\mathbf M}}
\newcommand{\bG}{{\mathbf G}}
\newcommand{\llra}{\longleftrightarrow}
\newcommand{\lra}{\leftrightarrow}
\begin{document}

\title{The random connection model and functions of edge-marked Poisson processes: second order properties and normal approximation}

\date{\today}
\renewcommand{\thefootnote}{\fnsymbol{footnote}}

\author{G\"unter Last\footnotemark[1]\,, Franz Nestmann\footnotemark[1]\,
and Matthias Schulte\footnotemark[2]}

\footnotetext[1]{guenter.last@kit.edu, franz.nestmann2@kit.edu,
Karlsruhe Institute of Technology}
\footnotetext[2]{matthias.schulte@stat.unibe.ch, University of Bern}

\maketitle

\begin{abstract}
\noindent
The random connection model is a random graph
whose vertices are given by the points of
a Poisson process and whose edges are obtained by randomly connecting
pairs of Poisson points in a position dependent but independent way.
We study first and second order properties of the numbers of components isomorphic
to given finite connected graphs. For increasing observation windows in an Euclidean setting we
prove qualitative multivariate and quantitative univariate
central limit theorems for these component counts as well as a qualitative central limit theorem
for the total number of finite components.
To this end we first derive general results
for functions of edge marked Poisson processes, which we believe to be of independent interest.
\end{abstract}

\maketitle

\noindent\textbf{Key words:} Random connection model, component count, covariance structure, central limit theorem, Poisson process, edge marking, Gilbert graph,
random geometric graph

\noindent\textbf{MSC 2010 subject classifications:} 60D05, 60F05, 05C80, 60G55

\section{Introduction}\label{secintro}

For many decades random graphs have attracted much interest because of both their mathematical beauty and the importance of complex networks, i.e., large graphs with a highly non-trivial structure, in many other sciences; see \cite{Bollobas,vdHofstad,JansonLuczakRucinski} and the references therein. Two prominent models for random graphs are \emph{Erd\H os-R\'enyi graphs} and \emph{random geometric graphs}.  In the Erd\H os-R\'enyi model, which goes back to \cite{ErdoesRenyi59,Gilbert59} and has been studied extensively ever since (see e.g.\ \cite{Bollobas,vdHofstad,JansonLuczakRucinski}), the vertex set is $[n]:=\{1,\hdots,n\}$ for some $n\in\N$, and pairs of distinct vertices are independently connected by an edge with probability $p\in[0,1]$. While the Erd\H os-R\'enyi graph is a  purely combinatorial object, the vertices of a random geometric graph are points in $\R^d$ and are given by a random point sample or, more precisely, a point process $\zeta$ on $\R^d$. Two distinct vertices $x,y\in\zeta$ are connected by an edge whenever their Euclidean distance $|x-y|$ does not exceed a given threshold $r>0$. The random geometric graph was introduced in \cite{Gilbert61} and is also called Gilbert graph. For a comprehensive investigation of random geometric graphs we refer to the monograph \cite{Penrose03}.

The {\em random connection model} (RCM) studied in the present paper can be seen as a combination of the Erd\H os-R\'enyi graph and the random geometric graph and is also known as \emph{soft random geometric graph}. As in case of random geometric graphs we start with an underlying point process $\zeta$ on $\R^d$. Between two distinct vertices $x,y\in\zeta$ we draw an edge with the probability $\varphi(x,y)$ depending on the positions of $x$ and $y$ in $\R^d$. For different pairs of vertices these decisions are made independently. For the choice $\varphi(x,y):=\mathbf{1}\{|x-y|\leq r\}$, $x,y\in\R^d$, with $r>0$ the vertices are connected in a deterministic way and the resulting graph is a random geometric graph. In case that $\varphi(x,y):=p\in(0,1)$, $x,y\in\R^d$, the decisions which vertices are connected are not affected by the locations of the points of $\zeta$ at all. If, additionally, $\zeta$ consists of a fixed finite number of points, the resulting graph is isomorphic to an Erd\H os-R\'enyi graph. Because of these extreme choices for $\varphi$, where the edges are completely determined by the geometry of $\zeta$ or are completely independent of the geometry of $\zeta$, one can think of the RCM as an interpolation between the Erd\H os-R\'enyi graph and the random geometric graph.
The results of this paper will include the random geometric graph as a special case but not the Erd\H os-R\'enyi graph.

In this paper the underlying point process $\zeta$ will always be a Poisson process. The independence properties of Poisson processes are important for our analysis of the RCM and, in contrast to a binomial point process consisting of a fixed number of i.i.d.\ points, this choice allows to construct a stationary RCM in $\R^d$. As far as we know, the RCM with an underlying (stationary) Poisson process was first considered in \cite{Penrose91}, where the percolation behaviour and first-order properties of components were studied. For further percolation results involving more general underlying stationary point processes we refer to \cite{BurtonMeester93,MeesterRoy96}.
The connectivity of RCMs on finite Poisson processes, the closely related number of isolated vertices and more general degree counts were studied in \cite{DettGeorg16,GilesGeorgiouDettmann,Iyer2018, MaoAnderson, Penrose16, Penrose17} (see also the references therein), while the diameter was investigated in \cite{DevroyeFraimanDiameter}.
Some of this work was motivated by applications in wireless communication networks; see e.g.\ \cite{FranMees08}.
The RCM can also be seen as a continuous version of discrete long-range percolation models where edges between the points of $\mathbb{Z}^d$ are drawn according to a connection function
$\varphi$; see \cite{DeijfenHofstadHooghiemstra,DeprezWuethrich,GrimmettKeaneMastrand} and the references therein.

The general RCM (defined and studied in Sections 2--4)
is based on a Poisson process $\eta$ on a Borel space $(\bX,\mathcal{X})$
with diffuse $\sigma$-finite intensity measure $\lambda$. We interpret
$\eta$ as a random discrete subset of $\bX$.
Fix a measurable and symmetric {\em connection function} $\varphi\colon \bX^2\rightarrow[0,1]$.
Given $\eta$, suppose any two distinct points $x,y\in\eta$
are connected with probability $\varphi(x,y)$ independently of all
other pairs. This yields the
RCM, an undirected random graph $\Gamma(\eta)$
with vertex set $\eta$.
A {\em component} ({\em cluster}) of this graph is a maximally connected subset of $\eta$.

We next present some of our results on the stationary (unmarked) Euclidean RCM $\Gamma(\eta)$ (studied in Sections 7--9),
which arises in the case where $\bX=\R^d$ and where $\lambda$ is a positive multiple of the
$d$-dimensional Lebesgue measure $\lambda_d$, i.e., $\eta$ is a stationary Poisson process of intensity $\beta>0$ in $\R^d$. We then assume that $\varphi(x,y)$
depends only on $y-x$ and that $0<\int \varphi(x)\,dx<\infty$,
where $\varphi(x):=\varphi(0,x)$, $x\in\R^d$.
For $k\in\N$ we can label the components of order $k$ ($k$-components)
of $\Gamma(\eta)$ by their lexicographic minima, i.e., their smallest vertices according to the lexicographic order. Let $\eta_k$ denote the
resulting stationary point process.
In the following we denote by $\mathcal{K}^d$ the set
of all compact convex subsets of $\R^d$ with non-empty interior and refer to its elements as convex bodies.
We write $r(W)$ for the inradius of $W\in\mathcal{K}^d$.
We shall consider sequences of convex bodies $(W_n)_{n\in\N}$ such that
$r(W_n)\to\infty$ as $n\to\infty$, and we denote the resulting asymptotic
regime by $r(W)\to\infty$. In the case of random geometric graphs this is basically the same as the thermodynamic limit (see e.g.\ \cite[p.\ 9]{Penrose03}).
Throughout the paper $N$ denotes a standard normal
random variable and $\overset{d}{\longrightarrow}$ stands for convergence in distribution.
We will prove the following
central limit theorem for $\eta_k(W)$, the number of $k$-components of $\Gamma(\eta)$ whose lexicographic minima belong to $W$.

\begin{theorem}\label{t1.1} Let $k\in\N$ and consider
the point process $\eta_k$ of $k$-components in a stationary RCM on $\R^d$ as described above.
Then
\begin{align*}
(\V \eta_k(W))^{-1/2}(\eta_k(W)-\BE \eta_k(W))\overset{d}{\longrightarrow} N
\quad \text{as} \quad r(W)\to\infty.
\end{align*}
\end{theorem}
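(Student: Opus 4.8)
The plan is to identify $\eta_k(W)$ as a function of an edge-marked Poisson process in the sense of Sections 2--4 and then to invoke the general normal approximation bound (second order Poincaré inequality) developed there, supplemented by a matching variance lower bound. Fix a sequence $(W_n)$ of convex bodies with $r(W_n)\to\infty$ and write $F_n:=\eta_k(W_n)$. Since $W_n$ contains a ball of radius $r(W_n)$ we have $\Vol(W_n)\to\infty$, so it suffices to bound the Wasserstein (equivalently Kolmogorov) distance between $(F_n-\BE F_n)(\V F_n)^{-1/2}$ and $N$ by a quantity of order $\Vol(W_n)^{-1/2}$. The RCM is encoded by $\eta$ together with, for each point, a mark recording which other points it is joined to; $F_n$ is a measurable functional of this marked process, and by the thinning/colouring property of Poisson processes the points of $\eta$ that get connected to a fixed further point $x$ form a Poisson process of intensity $\beta\varphi(x,\cdot)\lambda_d$, so their number $N_x$ is Poisson distributed with the finite parameter $\beta\int\varphi\,d\lambda_d$, with law independent of $x$.

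The first ingredient is moment control of the difference operators. Adding $x$ (with its edge marks) to $\eta$ merges exactly the components of $\Gamma(\eta)$ to which $x$ gets connected into one component, so the only effect on the number of $k$-components with lexicographic minimum in $W_n$ is the loss of the merged $k$-components (at most $N_x$ of them) and the gain of at most one new $k$-component; hence $|D_x F_n|\le 1+N_x$, and $\BE|D_x F_n|^p\le\BE(1+N_x)^p<\infty$ uniformly in $x$ and $n$ for every $p\ge 1$, in fact with a uniform exponential moment bound. Moreover $D_x F_n=0$ unless $x$ is connected to a component with lexicographic minimum in $W_n$, or $x$ together with at most $k-1$ points of $\eta$ forms a $k$-component counted by $F_n$; by the Mecke equation and $\int\varphi\,d\lambda_d<\infty$ this forces $\int\BP(D_xF_n\neq 0)\,\lambda_d(dx)=O(\Vol(W_n))$, and likewise $\int\BE(D_xF_n)^2\,\lambda_d(dx)=O(\Vol(W_n))$. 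For the second difference operator one uses that a component of $\Gamma(\eta)$ of order exceeding $k$ contributes nothing to $D_xF_n$ and still contributes nothing after a further insertion; tracing the possible changes shows that $D^2_{x_1,x_2}F_n$ vanishes unless $x_1$ and $x_2$ are joined either directly or through a single component of $\Gamma(\eta)$ of order at most $k$, more generally through a chain of connection edges whose number is bounded by a constant $c(k)$. Since $|D^2_{x_1,x_2}F_n|$ is again dominated by a Poisson-type variable with uniformly bounded exponential moments, Hölder's inequality together with the Mecke equation and the facts $\|\varphi\ast\cdots\ast\varphi\|_1=\|\varphi\|_1^m$ and $\varphi\le 1$ bound the mixed integrals of $D^2F_n$ occurring in the Poincaré inequality by $O(\Vol(W_n))$; the non-compactness of the support of $\varphi$ requires some care here but causes no real difficulty because the relevant probabilities are controlled by iterated convolutions of the integrable function $\varphi$.

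The remaining, and in my view main, obstacle is the variance lower bound $\V F_n\ge c\,\Vol(W_n)$ for some $c>0$ and all large $n$; this is delicate precisely because $\varphi$ need not have bounded support, so widely separated regions are not independent. The idea is to tile a fixed fraction of $W_n$ by $m_n\asymp\Vol(W_n)$ translates $Q_1,\dots,Q_{m_n}$ of a large cube $Q$ and, for each $j$, to consider the event that $\eta$ has exactly $k$ points in $Q_j$, that these form a connected graph with lexicographic minimum in $Q_j\cap W_n$, and that none of them is joined to a point of $\eta$ outside $Q_j$; for $Q$ large enough each such event has probability bounded below (the expected number of connection edges leaving $Q_j$ being uniformly small) and changes $F_n$ by a controlled amount. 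Conditioning on $\eta$ and on all edge marks outside $Q_j$, the conditional contribution of $Q_j$ to $F_n$ still carries conditional variance bounded below on an event of probability bounded below, and these conditional fluctuations are almost independent across $j$; a martingale difference decomposition, or a direct second moment estimate exploiting this near-independence, then gives $\V F_n\ge c\,m_n\asymp\Vol(W_n)$. Alternatively one may invoke the general lower variance bound for translation invariant functionals of the stationary RCM established earlier in the paper.

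Feeding $\V F_n\asymp\Vol(W_n)$ together with the $O(\Vol(W_n))$ bounds on the first and second order integrals into the second order Poincaré inequality shows that the distance between $(F_n-\BE F_n)(\V F_n)^{-1/2}$ and $N$ is $O(\Vol(W_n)^{-1/2})$, which tends to $0$ since $r(W_n)\to\infty$ forces $\Vol(W_n)\to\infty$; this yields the asserted convergence in distribution.
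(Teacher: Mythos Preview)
Your overall strategy---encode $\eta_k(W)$ as a functional of an edge-marked Poisson process, bound the first and second difference operators via degrees and short-path indicators, secure a linear variance lower bound, and feed everything into the second order Poincar\'e bound of Theorem~\ref{thm:General.CLT}---is exactly the paper's scheme for the \emph{quantitative} result, but there is a genuine gap at the step you dismiss as causing ``no real difficulty''. The term $\gamma_1$ (and the integrability hypotheses \eqref{eqn:Integrability1}, \eqref{eqn:Integrability2}) involve integrals of \emph{square roots} of expectations, for instance $\big[\BE(\Delta^2_{x_1,x_3}F)^2(\Delta^2_{x_2,x_3}F)^2\big]^{1/2}$. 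After bounding $\Delta^2_{x,y}F$ by a degree variable times $\mathbf{1}\{x\overset{\le k+1}{\llra}y\}$ and applying Mecke, one is left integrating fractional powers of path probabilities, essentially quantities of the type $\int\big(\sum_{j\le k+1}\varphi^{*j}(z)\big)^{1/2}\,dz$. These are \emph{not} controlled by iterated convolutions of the integrable function $\varphi$: if $\varphi(x)\asymp |x|^{-d-\epsilon}$ with $0<\epsilon<d$ then $m_\varphi<\infty$ while $\int\varphi^{1/2}=\infty$, and the relevant integrals diverge. So under the sole assumption $0<m_\varphi<\infty$ your $O(\Vol(W_n))$ bound on the $\gamma_1$-integral is unjustified, and Theorem~\ref{thm:General.CLT} cannot be applied directly.

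The paper confronts exactly this obstruction, and this explains the architecture of Section~\ref{seccounts}. The quantitative bounds (Theorems~\ref{thm:QuantitativeGeneral} and~\ref{thm:CLTWasserstein}) are proved only under the extra hypothesis~\eqref{eqn:TildeVarphi}, a decreasing radial majorant $\tilde\varphi$ with $\int\tilde\varphi(|x|)^{1/3}\,dx<\infty$, which is precisely what is needed to integrate the fractional powers arising in Lemma~\ref{lem.diff.bound}. The qualitative Theorem~\ref{t1.1} is then obtained (via Theorem~\ref{thm:CLTGeneral}) by truncating $\varphi$ to $\psi_n(x)=\mathbf{1}\{|x|\le n\}\varphi(x)$, for which \eqref{eqn:TildeVarphi} holds trivially, applying the quantitative CLT to the coupled truncated RCM, and passing to the limit in Wasserstein distance by an $L^2$-approximation: Lemma~\ref{lem:L2Approximation} shows $\sigma_{\varphi,\psi_n}\to\sigma_{\varphi,\varphi}$ and $\sigma_{\psi_n,\psi_n}\to\sigma_{\varphi,\varphi}$, whence $\V\big(S_{\varphi,a,G}(W)-S_{\psi_n,a,G}(W)\big)/\lambda_d(W)\to 0$ as $n\to\infty$. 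Your proposal is missing this truncation-and-approximation layer, and without it (or without assuming the stronger integrability of $\varphi$) the argument does not close.
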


For an isotropic and monotone connection function Theorem \ref{t1.1} was proved in \cite{BrugMeester04}.
In the case $k\geq2$ the authors assumed the connection function to have bounded support.
For the special case $k=1$, i.e., the number of isolated vertices, a central limit theorem was stated in \cite{RoySarkar2003}, whose proof was erroneous as discussed in \cite{BrugMeester04}. For a different asymptotic regime, Poisson and central limit theorems for component counts in some general RCMs are derived in \cite[Theorem 2.3]{Penrose17}.

Under the stronger assumption that there is a monotonously decreasing function $\tilde\varphi\colon [0,\infty)\to[0,1]$ such that
\begin{equation}\label{eqn:AssumptionTildeVarphi}
\varphi(x)\le \tilde\varphi(|x|),\quad x\in\R^d, \quad \text{and} \quad \int_{\R^d}\tilde\varphi(|x|)^{1/3}\,dx<\infty,
\end{equation}
we shall prove the following quantitative version of
Theorem \ref{t1.1}. We use the Kolmogorov distance $d_K(X,Y)$ between
the distributions of two random variables $X$ and $Y$ (see \eqref{eKolmogorov}), which is the supremum norm of the difference of the distribution functions.

\begin{theorem}\label{t1.2} Let the assumptions of Theorem \ref{t1.1} be satisfied and assume that \eqref{eqn:AssumptionTildeVarphi} holds.
Then there exist constants $C,\tau>0$ only depending on $\beta$, $\varphi$, $\tilde{\varphi}$ and $k$ such that
\begin{equation}\label{eqn:BerryEsseen}
d_K\big((\V \eta_k(W))^{-1/2}(\eta_k(W)-\BE \eta_k(W)),N\big)\le C \lambda_d(W)^{-1/2}
\end{equation}
for all $W\in\mathcal{K}^d$ with $r(W)\ge \tau$.
\end{theorem}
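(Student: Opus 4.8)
The plan is to apply the Malliavin--Stein method for functionals of edge-marked Poisson processes developed in Sections 2--4. Write $F:=\eta_k(W)$ and view it as a functional of the edge-marked Poisson process obtained by equipping the stationary Poisson process $\eta$ with the independent connection marks. Under \eqref{eqn:AssumptionTildeVarphi} the moment estimates for cluster counts guarantee that $F$ has finite moments of every order and lies in the domain of the relevant difference operators, so we may invoke our general normal approximation bound (equivalently, the second-order Poincar\'e-type inequality for such functionals). That bound controls $d_K\big((F-\BE F)/\sqrt{\V F},N\big)$ by a finite sum of terms of the form $(\V F)^{-1}\gamma_i$ and $(\V F)^{-3/2}\gamma_j$, where each $\gamma$ is an integral over $\R^d$ (and the mark space) of a product of low-order moments of the first difference operator $D_xF$ and the second difference operator $D^2_{x,y}F$. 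It therefore suffices to establish (a) $\gamma_i\le C\lambda_d(W)^{1/2}$ for the terms of the first kind and $\gamma_j\le C\lambda_d(W)$ for those of the second kind, and (b) $\V F\ge c\,\lambda_d(W)$ for all $W$ with $r(W)$ large, where $C,c>0$ depend only on $\beta,\varphi,\tilde\varphi,k$; dividing then yields \eqref{eqn:BerryEsseen}.

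For (a) the key structural fact is that inserting a marked point at $x$ can create at most one new $k$-component --- the one containing $x$ --- and destroys precisely those $k$-components of $\Gamma(\eta)$ that are joined to $x$ by an edge; hence $|D_xF|\le 1+\deg_\eta(x)$, where $\deg_\eta(x)$ is the degree of the inserted point in $\Gamma(\eta+\delta_x)$, and $D^2_{x,y}F$ obeys an analogous bound which, crucially, vanishes unless $x$ and $y$ lie in a common component of $\Gamma(\eta+\delta_x+\delta_y)$ or are both adjacent to a common $k$-component of $\Gamma(\eta)$. Under the Palm distribution the neighbours of $x$ form a Poisson process of intensity $\beta\varphi(\cdot-x)$ and total mass $\beta\int\varphi<\infty$, so $1+\deg_\eta(x)$ has all moments bounded uniformly in $x$. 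To make the integrands integrable over $\R^d$ one uses that $D_xF\ne0$ forces $x$ to be adjacent to a vertex lying in a finite $k$-cluster whose lexicographic minimum belongs to $W$; by the Mecke equation the probability of this event is bounded by a convolution of $\varphi$ with the probability that a given vertex sits in such a cluster, and since a $k$-cluster spanning a large distance requires a long sequence of connections, this quantity is dominated --- after a monotone coupling with the RCM driven by $\tilde\varphi$ --- by a convergent sum of convolution powers of $\beta\tilde\varphi(|\cdot|)$, which decays polynomially fast. Applying H\"older's inequality to the products of difference operators appearing in the $\gamma$'s --- each worst-case term involving three source points and being split with exponents $(3,3,3)$ --- reduces the finiteness of the $\gamma$'s to $\int_{\R^d}\tilde\varphi(|x|)^{1/3}\,dx<\infty$, which is exactly \eqref{eqn:AssumptionTildeVarphi}; a routine bookkeeping then also yields the claimed orders $\lambda_d(W)^{1/2}$ and $\lambda_d(W)$.

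For (b) one needs strict positivity of the asymptotic variance density: $\V \eta_k(W)\ge c\,\lambda_d(W)$ for $r(W)$ large. This can be read off from the second-order analysis, either from an asymptotic identity $\V \eta_k(W)=\sigma_k^2\lambda_d(W)(1+o(1))$ together with $\sigma_k^2>0$, or directly by a disjoint-blocks argument: partition $W$ into $\Theta(\lambda_d(W))$ unit cubes and note that within each cube a bounded local modification of the edge-marked process changes $\eta_k(W)$ by exactly one with probability bounded away from $0$ (using only $0<\int\varphi<\infty$), so the Efron--Stein/Poincar\'e decomposition over these independent local resamplings gives the bound. Combining (a) and (b), the general inequality yields $d_K\big((\V \eta_k(W))^{-1/2}(\eta_k(W)-\BE \eta_k(W)),N\big)\le C\lambda_d(W)^{1/2}/(c\,\lambda_d(W))+C\lambda_d(W)/(c\,\lambda_d(W))^{3/2}\le C'\lambda_d(W)^{-1/2}$ once $r(W)\ge\tau$, which is \eqref{eqn:BerryEsseen}. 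The main obstacle is step (a): squeezing enough spatial decay out of the moments of $D_xF$ and $D^2_{x,y}F$ to make all the $\gamma$'s converge at the right order, which is precisely where the unbounded range of $\varphi$ bites and where the cube-root integrability in \eqref{eqn:AssumptionTildeVarphi} --- matching the three-point structure of the worst Stein term --- is needed; establishing $\sigma_k^2>0$ is a second, softer point.
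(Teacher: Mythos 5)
Your proposal follows essentially the same route as the paper: the general Kolmogorov bound for functionals of edge-marked Poisson processes (Theorem \ref{thm:General.CLT}, with the six $\gamma$-terms), pointwise bounds $|\Delta_x\eta_k(W)|\le \deg(x,\Gamma(\eta_x))+1$ localized by the event $x\overset{\le k}{\llra}W$, moment bounds extracted from the cube-root integrability of $\tilde\varphi$ via H\"older with exponents matching the three-point structure of the worst term, and a lower variance bound of order $\lambda_d(W)$ (Lemmas \ref{lem.first.diff}--\ref{lem.min}, Corollary \ref{cor:lowerBoundVar}). Two points in your sketch deserve a caveat. First, the claim that the relevant connection probabilities ``decay polynomially fast'' is neither true in general nor needed: the paper instead uses monotonicity of $\tilde\varphi$ and a pigeonhole argument (some hop on a path of length $\le k$ from $x$ to $W$ must span at least $d(x,W)/k$) to peel off a factor $\tilde\varphi(d(x,W)/k)^{2/3}$, leaving a $\tilde\varphi^{1/3}$-product that integrates by \eqref{eqn:AssumptionTildeVarphi}, and then integrates the peeled factor via the Steiner-formula estimate of Lemma \ref{lem.min}. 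Second, the lower variance bound is not the ``softer point'' you suggest: an Efron--Stein/Poincar\'e decomposition over disjoint blocks only gives \emph{upper} bounds on the variance, and because $\varphi$ has unbounded range the effect of a local modification is not measurable with respect to the block, so its conditional expectation given the far field can be washed out. The paper circumvents this with the exact martingale variance identity of Theorem \ref{thm:VarianceRepresentation} (birth times), restricting to late birth times $t\ge t_0$ so that the conditional expectation of the add-one effect given $\hat\eta_t$ stays bounded away from zero on an explicit event; your alternative route via the asymptotic covariance formula plus $\sigma^{(k,k)}_{\varphi,\varphi}>0$ is exactly what the paper does, but that positivity is itself the content of Theorem \ref{thm:VarianceS} and requires the same machinery.
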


The rate of convergence in \eqref{eqn:BerryEsseen} seems to be quite good since in the classical central limit theorem for the sum of $n$ i.i.d.\ random variables one obtains $1/\sqrt{n}$ and $\lambda_d(W)$ and $n$ play a similar role.

In later results we will not only consider the number of components of the RCM consisting of $k$ vertices, but also count components that are isomorphic to a given finite connected graph.
We will derive multivariate central limit theorems similar to those proved in \cite[Theorem 3.11]{Penrose03} in the special case of random geometric graphs.
In the course of this we will show that the limits
\begin{align*}
\sigma_{\varphi,\varphi}^{(i,j)}:=\lim_{r(W)\to\infty} \frac{\C(\eta_i(W),\eta_j(W))}{\lambda_d(W)},\quad i,j\in\N,
\end{align*}
exist and we will provide explicit formulas for these asymptotic covariances.
Moreover, for all $k\in\N$ the asymptotic variance $\sigma_{\varphi,\varphi}^{(k,k)}$ is positive and for all $m\in\N$ the asymptotic covariance matrix $\big(\sigma_{\varphi,\varphi}^{(i,j)}\big)_{i,j\in[m]}$ is positive definite.

We will also consider the total number of components of the RCM, for which a strong law of large numbers is established in \cite[Theorem 2]{Penrose91}. For $W\in \mathcal{K}^d$
let $\bar\eta(W)$ denote the
number of finite components of $\Gamma(\eta)$ whose vertices
are all in $W$.

\begin{theorem}\label{t1.3} Let the assumptions of Theorem \ref{t1.1}
be satisfied. Then the limit
$\lim_{r(W)\to\infty} (\lambda_d(W))^{-1}\V(\bar\eta(W))$
exists as a positive and finite number and
\begin{align*}
(\V \bar\eta(W))^{-1/2}(\bar\eta(W)-\BE \bar\eta(W))\overset{d}{\longrightarrow} N
\quad \text{as} \quad r(W)\to\infty.
\end{align*}
\end{theorem}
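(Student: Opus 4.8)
The plan is to reduce $\bar\eta(W)$ to the component counts already controlled by the multivariate theory, to truncate the component order, and to treat large finite components as a negligible perturbation. For $k\in\N$ let $\bar\eta_k(W)$ denote the number of components of $\Gamma(\eta)$ consisting of exactly $k$ vertices, all lying in $W$, so that $\bar\eta(W)=\sum_{k\ge1}\bar\eta_k(W)$. Each $\bar\eta_k(W)$ is a functional of the edge-marked Poisson process of the same type as $\eta_k(W)$ and differs from it only through boundary effects; a Mecke-type computation gives $\BE|\bar\eta_k(W)-\eta_k(W)|=o(\lambda_d(W))$ and, more to the point, shows that $\bar\eta_k(W)$ obeys the same second-order asymptotics and the same multivariate central limit theorem as $\eta_k(W)$. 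Fixing $K\in\N$ and writing $\bar\eta^{\le K}(W):=\sum_{k=1}^{K}\bar\eta_k(W)$, the multivariate central limit theorem and the covariance asymptotics announced in the introduction then yield, by summing coordinates, $\lambda_d(W)^{-1/2}\big(\bar\eta^{\le K}(W)-\BE\bar\eta^{\le K}(W)\big)\overset{d}{\longrightarrow}s_K N$ as $r(W)\to\infty$, where $s_K:=\big(\sum_{i,j=1}^{K}\sigma_{\varphi,\varphi}^{(i,j)}\big)^{1/2}\in[0,\infty)$.

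The second step is to control the remainder $R_K(W):=\bar\eta(W)-\bar\eta^{\le K}(W)=\sum_{k>K}\bar\eta_k(W)$, the number of finite components of order larger than $K$ contained in $W$. By the Poincaré inequality for Poisson functionals, $\V(R_K(W))\le\beta\int_{\R^d}\BE\big[(D_xR_K(W))^2\big]\,dx$, and $|D_xR_K(W)|$ is bounded by one plus the number of large finite components that the added point $x$ links to; this quantity has moments tending to $0$ as $K\to\infty$ — uniformly in $x$ for $x$ well inside $W$, and equal to $0$ once $x$ is far from $W$ — because $\BP(0\text{ lies in a finite component of order}>K)\to0$ (continuity from above, using that the finite component of a typical point has finite order or none) while the total density of finite components is finite by \cite{Penrose91}. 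Hence $\limsup_{r(W)\to\infty}\V(R_K(W))/\lambda_d(W)\le\varepsilon(K)$ with $\varepsilon(K)\to0$. Writing $\V(\bar\eta(W))=\V(\bar\eta^{\le K}(W))+2\,\C(\bar\eta^{\le K}(W),R_K(W))+\V(R_K(W))$ and estimating the covariance by Cauchy--Schwarz, one obtains that $\lambda_d(W)^{-1}\V(\bar\eta(W))$ converges to some $\sigma^2\in[0,\infty)$ and that $s_K\to\sigma$ as $K\to\infty$.

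For the central limit theorem itself I would write $\lambda_d(W)^{-1/2}(\bar\eta(W)-\BE\bar\eta(W))$ as the sum of $\lambda_d(W)^{-1/2}(\bar\eta^{\le K}(W)-\BE\bar\eta^{\le K}(W))$ and $\lambda_d(W)^{-1/2}(R_K(W)-\BE R_K(W))$. The first term converges in distribution to $s_KN$ as $r(W)\to\infty$, the second has variance at most $\varepsilon(K)+o(1)$, and $s_K\to\sigma$; the standard approximation lemma for convergence in distribution of triangular arrays then gives $\lambda_d(W)^{-1/2}(\bar\eta(W)-\BE\bar\eta(W))\overset{d}{\longrightarrow}\sigma N$. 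Together with the variance asymptotics and Slutsky's theorem this proves the theorem, \emph{provided} $\sigma^2>0$.

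Establishing $\sigma^2>0$ is the main obstacle. The first-order lower bound $\V(\bar\eta(W))\ge\beta\int_{\R^d}(\BE[D_x\bar\eta(W)])^2\,dx$ is not enough, since for $x$ deep inside $W$ the quantity $\BE[D_x\bar\eta(W)]$ converges to the derivative in $\beta$ of the asymptotic density of finite components, which may vanish — for instance at an interior maximum of that density, which exists because the density tends to $0$ both as $\beta\to0$ and as $\beta\to\infty$, where percolation sets in. Instead I would argue by conditioning. Fixing a ball $B$ well inside $W$ and letting $\mathcal E$ be the $\sigma$-field generated by $\eta$ and its edge marks outside $B$, one has $\V(\bar\eta(W))\ge\BE[\V(\bar\eta(W)\mid\mathcal E)]$; comparing the events $\{\eta\cap B=\emptyset\}$, on which $\bar\eta(W)$ is $\mathcal E$-measurable, and $\{\eta\cap B=\{p\}\text{ with }p\text{ joined to no point of }\eta\cap B^{c}\}$, on which $\bar\eta(W)$ increases by exactly one, and observing that both events have $\mathcal E$-conditional probability bounded below (here $\int\varphi<\infty$ enters), one finds that the conditional law of $\bar\eta(W)$ given $\mathcal E$ charges two consecutive integers with probability bounded below, so $\V(\bar\eta(W)\mid\mathcal E)$ is bounded away from $0$. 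The hard part is to boost this constant lower bound to the required order $\lambda_d(W)$: this calls for running the construction inside a block-by-block martingale over $\asymp\lambda_d(W)$ well-separated bulk regions and showing that each martingale increment genuinely fluctuates, which forces one to exhibit, for (almost) every configuration of the already revealed blocks, two local modifications of the current block whose effects on $\bar\eta(W)$ differ by a bounded-below amount whatever the unrevealed blocks do — and here one should distinguish the degenerate case $\varphi\in\{0,1\}$ almost everywhere, where the fluctuation must come from the geometry of $\eta$ as for random geometric graphs \cite{Penrose03}, from the case where $\varphi$ takes values in $(0,1)$ on a set of positive measure, where an edge-level (Hoeffding-type) variance decomposition already produces $\asymp\lambda_d(W)$ independent sources of fluctuation. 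This lower bound of the correct order is the technical heart of the proof.
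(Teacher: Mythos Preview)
Your truncation-plus-remainder scheme is exactly the paper's strategy: it defines $\tilde\eta_{\le m}(W)$ and $\tilde\eta_{>m}(W)$, shows via the Poincar\'e inequality (Theorem~\ref{tpoincare}) that $\limsup_{r(W)\to\infty}\lambda_d(W)^{-1}\V\tilde\eta_{>m}(W)\le C_\varphi\sqrt{q_{\varphi,m}}\to0$ (your $\varepsilon(K)$), deduces that $(\sigma_{\varphi,\le m})_m$ is Cauchy, and then runs precisely your three-term triangle inequality for the CLT. So on that part you and the paper agree.

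The genuine divergence is in the positivity $\sigma^2>0$, and here the paper's route is substantially simpler than your block-martingale programme. The paper does not attempt to boost a constant lower bound to order $\lambda_d(W)$ via spatial blocking; instead it exploits its exact variance representation (Theorem~\ref{thm:VarianceRepresentation}) for edge-marked Poisson processes, which already carries an outer integral $\int\lambda(dx)$ and hence the volume scaling for free. Concretely, Lemma~\ref{lem:LowerBoundVarianceBall} restricts the inner integrand to the event $B(x,t,M)$ that in $\widetilde\Gamma(\hat\eta_t\cup\{(x,t,M)\})$ there exist two vertices $y_1,y_2\in B^d(0,r)$ each connected only to $x$; on this event, removing $x$ splits one component into two, so the conditional increment is bounded away from zero for $t$ close to $1$. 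Computing $\BP(B(x,t,M))$ via the Mecke equation gives an explicit density integrated over $x\in B^d(0,r)$, yielding the order-$\lambda_d(W)$ lower bound directly. This avoids both your block martingale and your case distinction between deterministic and genuinely random $\varphi$; the only point where $m_\varphi>0$ enters is in checking that $\int\varphi(y_1)\varphi(y_2)\bar\varphi(y_1-y_2)\,d(y_1,y_2)>0$, handled by an elementary measure-theoretic argument.

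In short: your outline is correct and would work, but your ``technical heart'' is an artefact of not having the variance formula of Theorem~\ref{thm:VarianceRepresentation} at hand. With that tool the hard part dissolves into a single explicit integral.
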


In the case of random geometric graphs a central limit theorem for the total number of components is shown in
Theorem 13.27 in \cite{Penrose03}. Unlike \cite{Penrose03}, our proof does not use
percolation theory. Instead we are using a multivariate version
of Theorem \ref{t1.1} and approximation arguments.

As opposed to the random geometric graph treated in \cite{Penrose03}, in the RCM the edges are drawn randomly, whereby the connection function can inject far reaching dependencies.
As a consequence our proofs require significant new ideas, even though there are several ways to embed a RCM into a marked Poisson process with a sufficiently rich state space.

To prove Theorems \ref{t1.1}-\ref{t1.3} we shall derive
variance inequalities and normal approximation bounds in a much more
general setting. As for the general RCM we let $\eta$ be a
Poisson process on a Borel space $\bX$ with diffuse $\sigma$-finite intensity
measure $\lambda$. Each pair of Poisson points is marked
with a random element taking values in another Borel space $\bM$.
Given $\eta$, these random elements are assumed to be
independent and identically distributed.
We call the resulting point process $\xi$
an {\em edge marking} of $\eta$. For a square integrable
random variable $F$ depending measurably on $\xi$ we introduce difference operators and
derive a variance representation and a \emph{Poincar\'e inequality} as well as quantitative versions
of the central limit theorem from similar results for Poisson functionals.
We achieve this by applying the results from \cite{LaPeSchu16} and \cite{LastPenrose2011} to alternative Poisson representations of edge marked Poisson processes with suitable monotonicity properties and by using some approximation arguments.
We believe that our approach is of independent interest and can be applied to other problems as well. Here, we have, in particular, percolation models with underlying Poisson processes in mind. For example if one independently deletes or colours the edges of a Poisson-Delaunay tessellation, one also obtains an edge marking of a Poisson process. Moreover, we expect that our approach can be generalized to the case where for some $k\in\N$ all $k$-tuples of points of an underlying Poisson process are marked instead of pairs of points.

Our definition of the general RCM includes for $\mathbf{X}=\R^d\times[0,\infty)$ the case of an Euclidean RCM with weights, where one has an underlying marked Poisson process in $\R^d$, i.e., the points are equipped with i.i.d.\ random weights, which are now considered in the connection function as well.
For instance, vertices with larger weights have a higher probability to be connected by edges.
An important example of a RCM with weights is the Boolean model (see e.g.\ \cite{MeesterRoy96}), where balls with i.i.d.\ radii are put around the vertices and two distinct vertices are connected by an edge whenever the corresponding balls intersect.
Another example of the RCM with weights is studied in \cite{DeprezWuethrich16}, as a continuous counterpart to the discrete scale-free percolation model considered in \cite{DeijfenHofstadHooghiemstra}.
It is possible to use our general results to prove
versions of Theorems \ref{t1.1}-\ref{t1.3} for the weighted case, but we confine ourselves to the unweighted case to avoid further technical issues in the proofs. The weighted case will be treated elsewhere.

The paper is organized as follows. In Section \ref{secpre}
we present rigorous definitions of an edge marking and
the general RCM and fix some notation. In Section \ref{secsecond}
we derive formulas for the first and second moments of the
component counts of the general RCM. Section \ref{sanother} gives
another (distributionally equivalent) construction of the edge marking $\xi$ of $\eta$
in terms of an independent marking of the Poisson
process $\eta$. This and a related construction are crucial for proving the results
in Sections \ref{secdiffop} and \ref{sec:GeneralResultNormalApproximation}.
In Section \ref{secdiffop} we consider square integrable random variables $F$ depending measurably on $\xi$
and derive a variance representation in terms of conditional expectations and difference operators as well as a Poincar\'e inequality. In Section \ref{sec:GeneralResultNormalApproximation}
we prove quantitative bounds for the Wasserstein and the Kolmogorov distance
between a standardized $F$ and a standard normal random variable.
In the three final sections we study the RCM with respect to a stationary Poisson process on $\R^d$.
In Section \ref{sasymptoticcov}
we prove the existence and positive definiteness of the asymptotic covariance matrices for component counts,
while Section \ref{seccounts} presents more general versions of
Theorems \ref{t1.1} and \ref{t1.2}. The final section is devoted to the
proof of Theorem \ref{t1.3}.
In the appendix we provide a variance representation for functionals of Poisson processes, which could be of independent interest.

\section{Preliminaries}\label{secpre}

Let $(\bX,\cX)$ be a Borel space, i.e., $(\bX,\cX)$ is a measurable space and there is a Borel measurable bijection $T$ from $\bX$ to a Borel subset of $(0,1]$ with measurable inverse. By $\lambda$ we denote a diffuse and $\sigma$-finite measure on $\bX$.
We assume that $\bX$ is equipped with
a transitive binary relation $\prec$ such that
$\{(y,z):y\prec z\}$ is a measurable subset of $\bX^2$
and such that $x\prec x$ fails for all $x\in\bX$.
We also require
that $\lambda([x])=0$, $x\in\bX$,
where $[x]:=\bX\setminus \{z\in\bX:\text{$z\prec x$ or $x\prec z$}\}$.
Note that $x\in[x]$ for all $x\in\bX$.
The diffuseness of $\lambda$ and the existence of the
partial order $\prec$ are no restrictions of generality.
Indeed, let $(\bX',\cX')$ be an arbitrary Borel space equipped
with a $\sigma$-finite measure $\lambda'$.
Then we can extend the underlying space to $\bX:=\bX'\times[0,1]$ and
$\lambda:=\lambda'\otimes \lambda_1|_{[0,1]}$,
where $\lambda_1|_{[0,1]}$ is the restriction of the Lebesgue measure on $\R$ to the unit interval $[0,1]$,
and define the order relation on $\bX$ by $(x',s)\prec(y',t)$ if $s<t$.

All random objects ocurring in this paper are defined
over a fixed probability space $(\Omega,\mathcal{F},\BP)$.
A {\em point process} (on $\mathbf{X}$) is a random element $\eta$ of the space
$\bN(\bX)$ of all at most countably infinite subsets $\mu$ of  $\bX$, equipped with
the smallest $\sigma$-field $\mathcal{N}(\bX)$ making the mappings
$\mu\mapsto \mu(B):=\card(\mu \cap B)$ measurable for all $B\in\mathcal{X}$.
Then $\eta(B)$ (defined as the mapping $\omega\mapsto\eta(\omega,B):=\eta(\omega)(B)$)
is a random variable for each $B\in\mathcal{X}$. In fact,
$\eta$ is a {\em simple} point process, i.e., $\eta$ can be interpreted as a random counting measure without multiplicities; see
e.g.\ \cite[Chapter 6]{LastPenrose}. The {\em intensity measure} of
a point process $\eta$ is the measure $\cX\ni B\mapsto \mathbb{E}\eta(B)$ on $\mathbf{X}$.

Throughout the paper $\eta$ will denote
a Poisson (point) process with
intensity measure $\lambda$, which is defined by the following two properties (see e.g.\ Definition 3.1 and
Proposition 6.9 in \cite{LastPenrose} or \cite{Kallenberg17}):
\begin{enumerate}[(i)]
\item For every $B\in\mathcal{X}$ the distribution of $\eta(B)$ is Poisson with parameter $\lambda(B)$.
\item For every $n\in\N$ and pairwise disjoint sets $B_1,\ldots,B_n\in\mathcal{X}$ the random variables $\eta(B_1),\ldots,\eta(B_n)$ are independent.
\end{enumerate}
For $k \in \N_0\cup\{\infty\}$ we set $[k] := \{n\in\N : n \le k\}$ and $[k]_0 := [k] \cup\{0\}$.
Note that $[k] = \emptyset$ if $k=0$ and $[k] = \N$ if $k = \infty$.
Since $\bX$ is a Borel space and $\lambda$ is $\sigma$-finite,
\cite[Corollary 6.5]{LastPenrose} shows that
$\eta$ can be almost surely represented as
$$
  \eta=\{X_n:n\in [\kappa]\},
$$
where the $X_n$, $n\in\N$, are random elements of $\bX$, $\kappa:=\eta(\bX)$ is
a random element of $\N_0\cup\{\infty\}$, and
the  $X_n$ are measurable functions of  $\eta$.

The space $\bX^{[2]}:=\{e\in\bN(\bX):e(\bX)=2\}$ is a measurable subset
of $\bN$. When restricting the $\sigma$-field on $\mathbf{N}(\bX)$
to subsets of $\bX^{[2]}$,
this space becomes a Borel space; see \cite[Lemma 1.7]{Kallenberg17}.
Later, any $e\in \bX^{[2]}$ is a potential edge of the RCM.
Let $(\bM,\mathcal{M})$ be a further Borel space
and let $Z_{m,n}$, $m,n\in\N$, be independent random elements on $\bM$
with common distribution $\BM$. Assume that
the double sequence $(Z_{m,n})_{m,n\in\N}$ is independent of $\eta$.
Then
\begin{align}\label{e2.3}
\xi:=\{(\{X_m,X_n\},Z_{m,n}):X_m\prec X_n,\, m,n\in[\kappa]\}
\end{align}
is a point process on $\bX^{[2]}\times\bM$, namely an
({\em independent}) {\em edge marking of $\eta$}.
Note that
the Poisson process $\eta$ can be reconstructed from $\xi$.

Let $\varphi\colon\mathbf{X}^2\to[0,1]$ be a measurable and symmetric connection function.
To define the RCM we use $\xi$ in
the case $\bM=[0,1]$ and $\BM=\lambda_1|_{[0,1]}$.
Then
\begin{align}\label{e2.5}
\chi:=\{\{X_m,X_n\}:X_m\prec X_n,\, Z_{m,n}\le \varphi(X_m,X_n),\, m,n\in[\kappa]\}
\end{align}
is a point process on $\bX^{[2]}$, namely the set of edges.
The pair $\Gamma(\eta):=(\eta,\chi)$ is our RCM.
If several connection functions occur, we indicate the dependence on the connection function by writing $\Gamma_\varphi(\eta)$ instead of $\Gamma(\eta)$.
Note that the random graph $\Gamma(\eta)$ does not only depend on $\eta$ but on the whole
point process $\xi$. In the same way we define
$\Gamma(\zeta)=\Gamma_\varphi(\zeta)$ for any simple point process $\zeta$ on $\bX$.

A {\em component} of $\Gamma(\eta)$ is a set $\mu\subset\eta$  such
that the graph with vertex set $\mu$
and edges induced by $\Gamma(\eta)$
is connected and, moreover, no point of $\mu$ is connected to a point
of $\eta\setminus\mu$.

For any graph $G=(V,E)$ let $|G|:=\text{card}(V)$ denote the \emph{order} of $G$.
Two graphs $G=(V,E)$ and $G'=(V',E')$ are {\em isomorphic}
if there is a bijection $T\colon V\rightarrow V'$ such
that $\{x,y\}\in E$ if and only if $\{T(x),T(y)\}\in E'$
for all $x,y\in V$ with $x\ne y$. In this case we write
$G\simeq G'$.
For $k\in\N$ let $\bG_k$ be a set of connected graphs with vertex set
$[k]$, containing exactly one member of each equivalence
class. Then $\bG:=\bigcup_{k=1}^\infty \bG_k$ is up to isomorphy the
set of all finite connected graphs.

We denote by $\mathbf{G}_{k,\varphi}$ and
$\mathbf{G}_\varphi$ the graphs of $\mathbf{G}_{k}$ and $\mathbf{G}$,
respectively, which occur as components in the RCM $\Gamma_\varphi(\eta)$ with
positive probability. Note that these can be strict subsets
depending on the choice of $\varphi$. For example if
$\varphi(x,y)={\bf 1}\{|x-y|\leq r\}$, $x,y\in\R^2$, for some fixed $r>0$,
the resulting RCM in the plane, which is the random geometric graph, has almost surely
no components that are isomorphic to the graph
$([7],\{\{1,i\}: i\in\{2,\hdots,7\}\})$.
For $m\in\N$ we let
\begin{equation}\label{eqn:set.G.ne}
\mathbf{G}_\varphi^{m,\ne} := \big\{(G_1,\ldots,G_m) \in \mathbf{G}_\varphi^m : G_1,\ldots,G_m \text{ are distinct}\big\}.
\end{equation}

For $k\in\N$ and $G\in\bG_k$ let $\eta_{G}$ denote the point process
\begin{align*}
\eta_G :=\eta_{\varphi,G} := &\big\{ x_1 \in \eta : \exists (x_2,\ldots,x_{k}) \in \eta^{k-1} \text{ with } x_1\prec x_2\prec\cdots\prec x_k \text{ and} \\
&\quad \{x_1,x_2,\ldots,x_k\} \text{ is a component of } \Gamma_\varphi(\eta) \text{ isomorphic to } G \big\},
\end{align*}
i.e., the point process of lexicographic minima of components of the RCM isomorphic to $G$. Similarly, for each $k\in\N$ let
$$
\eta_{k}:=\eta_{\varphi,k}:=\bigcup_{G\in\bG_k}\eta_{\varphi,G}
$$
be the point process of lexicographic minima of $k$-components of the RCM.

We finish this section with some notation that is used in the sequel. Since we will often consider the probability that two vertices of our RCM are not connected by an edge, we introduce the abbreviation $\bar\varphi:=1-\varphi$.

For $x\in\R^d$ and $r>0$ we denote by $B^d(x,r)$ the closed $d$-dimensional ball with centre $x$ and radius $r$. For $x\in\R^d$ and a compact set $A \subset \R^d$ we define $d(x,A):=\min_{y\in A}|x-y|$.

For a graph $G$, vertices $x_1$ and $x_2$ of $G$ and $m\in\N$ we mean by $x_1 \overset{\le m}{\llra}x_2 \textit{ in }G$ that $x_1$ and $x_2$ are connected via a path in $G$ with at most $m$ edges.
If $m=1$, we write $x_1\leftrightarrow x_2 \textit{ in }G$.
Similarly, we write $x_1\not\lra x_2 \textit{ in } G$ if $x$ and $y$ are not connected by an edge in $G$.
For some set $W$ and a vertex $x$ of $G$ let $x\overset{\le m}{\llra}W \textit{ in }G$ denote the event that there exists a vertex $y$ of $G$ such that $y \in W$ and $x\overset{\le m}{\llra}y \textit{ in } G$, or $x\in W$.
For a vertex $x$ of $G$ let $\deg(x,G)$ be the degree of $x$ in $G$, i.e., the number of edges incident to $x$ in $G$.

\section{First and second order properties}\label{secsecond}

In this section we consider the RCM $\Gamma(\eta)$ introduced
in Section \ref{secpre}.
We shall study first and second moment properties of the component counts.
The next result shows that for $G\in\bG_k$ with $k\in\N$ the intensity measure of $\eta_{G}$ can be expressed in terms of the function
\begin{align}\label{e3.2}
p_{\varphi,G}(x_1,\ldots,x_k)
:=\I\{x_1\prec\cdots \prec x_k\}\BP(\Gamma_\varphi(\{x_1,\ldots,x_k\})\simeq G),
\quad x_1,\ldots,x_k\in\bX.
\end{align}
 Recall that $\bar\varphi:=1-\varphi$.

\begin{proposition}\label{p1.2} Let $k\in\N$, $G\in\bG_k$ and $W\in\cX$. Then
\begin{align}\label{intcluster}
\BE&\eta_G(W)\\ \notag
&=\int\I\{x_1\in W\}
p_{\varphi,G}(x_1,\ldots,x_k)
\exp\Bigg[\int \Bigg(\prod^k_{i=1}\bar\varphi(x_i,y)-1\Bigg)\,\lambda(dy)\Bigg]
\,\lambda^k(d(x_1,\dots,x_k)).
\end{align}
\end{proposition}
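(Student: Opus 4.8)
The plan is to compute $\BE\eta_G(W)$ via the multivariate Mecke equation for the Poisson process $\eta$, after rewriting the indicator that $x_1$ is the lexicographic minimum of a $G$-component as a product of three factors: a combinatorial/geometric factor depending only on the $k$ points themselves, a factor encoding that none of these $k$ points is connected to any other point of $\eta$, and (implicitly) the requirement that the connection marks among the $k$ points realise $G$. Concretely, by definition
\begin{align*}
\eta_G(W)=\sum_{(x_1,\ldots,x_k)\in\eta^k_{\ne}}\I\{x_1\in W\}\,\I\{x_1\prec\cdots\prec x_k\}\,\I\{\{x_1,\ldots,x_k\}\text{ is a component of }\Gamma(\eta)\text{ iso.\ to }G\},
\end{align*}
where $\eta^k_{\ne}$ denotes the $k$-tuples of distinct points. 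The event that $\{x_1,\ldots,x_k\}$ is a $G$-component splits into (a) the subgraph induced on $\{x_1,\ldots,x_k\}$ is isomorphic to $G$, which depends only on the marks $Z$ among these $k$ points, and (b) no $x_i$ is connected to any point of $\eta\setminus\{x_1,\ldots,x_k\}$, which depends on the marks between $\{x_1,\ldots,x_k\}$ and the rest of $\eta$.

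First I would apply the multivariate Mecke formula (Theorem~4.4 in \cite{LastPenrose}, say) to move the $k$ summation points out into a $\lambda^k$-integral: for a suitable nonnegative measurable integrand $f(x_1,\ldots,x_k,\eta)$,
\begin{align*}
\BE\sum_{(x_1,\ldots,x_k)\in\eta^k_{\ne}}f(x_1,\ldots,x_k,\eta)=\int\BE\, f(x_1,\ldots,x_k,\eta\cup\{x_1,\ldots,x_k\})\,\lambda^k(d(x_1,\ldots,x_k)),
\end{align*}
using that $\lambda$ is diffuse so the added points are a.s.\ distinct from each other and from $\eta$. Here one must be slightly careful: the edge marking $\xi$ must be re-expressed accordingly, but the marks attached to the $\binom{k}{2}$ pairs among $x_1,\ldots,x_k$ and the marks attached to pairs $\{x_i,y\}$ with $y\in\eta$ are, conditionally on the point configuration, independent i.i.d.\ $\BM$-distributed (here $\BM=\lambda_1|_{[0,1]}$), and independent of the marks internal to $\eta$. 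So after conditioning on the point positions, the integrand factorises.

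Next, taking the conditional expectation given $\eta$ and the positions $x_1,\ldots,x_k$: the indicator that the induced graph on $\{x_1,\ldots,x_k\}$ is isomorphic to $G$ has conditional expectation exactly $p_{\varphi,G}(x_1,\ldots,x_k)$ by the definition \eqref{e3.2} (the factor $\I\{x_1\prec\cdots\prec x_k\}$ is already included there). The indicator that no $x_i$ connects to a given point $y\in\eta$ has conditional probability $\prod_{i=1}^k\bar\varphi(x_i,y)$, and by independence across distinct $y$, the conditional probability that no $x_i$ connects to any point of $\eta$ equals $\prod_{y\in\eta}\prod_{i=1}^k\bar\varphi(x_i,y)$. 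Taking expectation over $\eta$ and using the Poisson probability generating functional, $\BE\prod_{y\in\eta}g(y)=\exp\big[\int(g(y)-1)\,\lambda(dy)\big]$ with $g(y):=\prod_{i=1}^k\bar\varphi(x_i,y)$, produces exactly the exponential factor in \eqref{intcluster}. Multiplying the three factors and integrating against $\lambda^k$ yields the claim.

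The main obstacle I anticipate is bookkeeping rather than conceptual: one must justify carefully that, under the Mecke-type manipulation, the relevant marks (pairs within $\{x_1,\ldots,x_k\}$; pairs between $\{x_1,\ldots,x_k\}$ and $\eta$; pairs within $\eta$) are genuinely conditionally independent with the stated distributions, so that the event "$\{x_1,\ldots,x_k\}$ is a $G$-component" really does factor into the $p_{\varphi,G}$ term times the "no outward edge" term. This is where the explicit construction \eqref{e2.3}--\eqref{e2.5} of $\xi$ via the i.i.d.\ array $(Z_{m,n})$ is used: the marks indexed by pairs involving at least one of the inserted points are fresh and independent of everything in the original configuration. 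A secondary technical point is measurability/$\sigma$-finiteness to license Fubini and the pgfl identity, but since $\bar\varphi\le1$ the integrand of the exponential is nonpositive and everything is finite, so no integrability issue arises beyond what $\sigma$-finiteness of $\lambda$ already gives.
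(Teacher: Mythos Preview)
Your proposal is correct and follows essentially the same route as the paper: write $\eta_G(W)$ as a sum over ordered $k$-tuples of distinct points, factor the component event into ``induced subgraph $\simeq G$'' and ``no outward edges,'' note their conditional independence given the point configuration, apply the multivariate Mecke equation, and evaluate the expectation of the no-outward-edge product via the Poisson probability generating functional. The only cosmetic difference is the order of presentation (the paper conditions on $\eta$ before invoking Mecke, whereas you invoke Mecke first and then condition), and your explicit discussion of the mark-independence bookkeeping is a welcome elaboration of what the paper leaves implicit.
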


For the Euclidean case similar formulas as \eqref{intcluster} for the expected number of $k$-components can be found in \cite[Proposition 1]{Penrose91} as well as for $k=1$ in \cite[Lemma 4]{RoySarkar2003} and \cite[Lemma 3.2]{BrugMeester04}, while the asymptotic expectations in the special case of random geometric graphs are considered in \cite[Proposition 3.3]{Penrose03}.

\begin{proof}[Proof of Proposition \ref{p1.2}]
For $k$ distinct points $x_1,\dots,x_k\in\eta$  let
$G(x_1,\ldots,x_k,\Gamma(\eta))$ denote the graph with vertices
$x_1,\ldots,x_k$ and edges induced by $\Gamma(\eta)$.
This graph is a component isomorphic to $G$ if and
only if $G(x_1,\dotsc,x_k,\eta)\simeq G$ and none
of the $x_i$ is connected to a point in
$\eta\setminus\{x_1,\ldots,x_k\}$.
Given $\eta$, these two events are independent and have
probabilities
$$
\BP(\Gamma(\{x_1,\ldots,x_k\})\simeq G) \qquad \text{and} \qquad\prod_{y\in\eta\setminus\{x_1,\ldots,x_k\}}\prod^k_{i=1}\bar\varphi(x_i,y),
$$
respectively. Using the multivariate Mecke equation (see e.g.\ \cite[Theorem 4.4]{LastPenrose})
it follows that
\begin{align*}
\BE\eta_G(W)=
&\int\I\{x_1\in W\}\, \1\{x_1\prec\cdots\prec x_k\}\,\BP(\Gamma(\{x_1,\ldots,x_k\})\simeq G)\\
&\times\BE\Bigg[\prod_{y\in\eta}\prod^k_{i=1}\bar\varphi(x_i,y)\Bigg]
\,\lambda^k(d(x_1,\dots,x_k)).
\end{align*}
The formula for the generating functional of a Poisson
process (see \cite[Exercise 3.6]{LastPenrose}) implies the result.
\end{proof}

Together with $\Gamma_\varphi(\eta)$ we consider a second RCM
$\Gamma_\psi(\eta)$ based on another connection function $\psi: \mathbf{X}^2\to[0,1]$.
The edges of $\Gamma_\psi(\eta)$ are defined by \eqref{e2.5} with
$\psi$ in place of $\varphi$. As a result, the two
random graphs $\Gamma_\varphi(\eta)$ and $\Gamma_\psi(\eta)$ are strongly
coupled. If, for instance, $\psi\le\varphi$, then each edge
of $\Gamma_\psi(\eta)$ is also an edge of $\Gamma_\varphi(\eta)$.

\begin{proposition}\label{p3.7} Let $k,l\in\N$,
$G\in\bG_k$ and $H\in\bG_l$. Let $\varphi,\psi: \mathbf{X}^2\to[0,1]$ be two connection
functions with $\psi\le \varphi$ and let $W,W'\in\cX$. Then
\begin{align*}
&\BE\eta_{\varphi,G}(W)\eta_{\psi,H}(W')=\int\I\{x_1\in W,x_{k+1}\in W'\}p_{\varphi,G}(x_1,\ldots,x_k)
p_{\psi,H}(x_{k+1},\ldots,x_{k+l})\\
&\times \prod^k_{i=1}\prod^{k+l}_{j=k+1}\bar\varphi(x_i,x_j)
\exp\Bigg[\int \Bigg(\prod^k_{i=1}\bar\varphi(x_i,y)\prod^{k+l}_{j=k+1}\bar\psi(x_j,y)-1\Bigg)\,
\lambda(dy)\Bigg]
\,\lambda^{k+l}(d(x_1,\dots,x_{k+l}))\\
&+\I\{k=l\}\int\I\{x_1\in W\cap W'\} \BP(\Gamma_\varphi(\{x_1,\ldots,x_k\})\simeq G, \Gamma_\psi(\{x_1,\ldots,x_k\})\simeq H)\\
&\qquad\qquad\qquad \times \1\{x_1\prec\cdots\prec x_k\} \exp\Bigg[\int \Bigg(\prod^k_{i=1}\bar\varphi(x_i,y)-1\Bigg)\,
\lambda(dy)\Bigg]\,\lambda^k(d(x_1,\dots,x_k)).
\end{align*}
\end{proposition}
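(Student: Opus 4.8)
The plan is to compute $\BE\,\eta_{\varphi,G}(W)\,\eta_{\psi,H}(W')$ by expanding the product of the two counts as a sum over ordered tuples of Poisson points and then applying the multivariate Mecke equation, just as in the proof of Proposition \ref{p1.2}, but now keeping track of two interacting clusters. Write $\eta_{\varphi,G}(W)=\sum^{\ne}_{x_1,\ldots,x_k\in\eta}\I\{x_1\in W\}\,\I\{x_1\prec\cdots\prec x_k\}\,\I\{G(x_1,\ldots,x_k,\Gamma_\varphi(\eta))\simeq G,\ \text{no }x_i\text{ joined to }\eta\setminus\{x_1,\ldots,x_k\}\text{ in }\Gamma_\varphi\}$, and similarly for $\eta_{\psi,H}(W')$ with a tuple $(x_{k+1},\ldots,x_{k+l})$. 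Multiplying, the pair of tuples either is disjoint (giving $k+l$ distinct Poisson points) or describes the \emph{same} lexicographically minimal point, hence — since the lexicographic minimum of a component determines the whole vertex set — the \emph{same} set of vertices $\{x_1,\ldots,x_k\}$; the latter forces $k=l$ and $\{x_1,\ldots,x_k\}=\{x_{k+1},\ldots,x_{k+l}\}$ as ordered tuples, which is the source of the diagonal term $\I\{k=l\}$. Any partial overlap is impossible: two distinct components cannot share a vertex, and if the two tuples have the same minimum they are identical.

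For the off-diagonal contribution I would fix $k+l$ distinct points $x_1,\ldots,x_{k+l}\in\eta$ and condition on $\eta$. The event that $\{x_1,\ldots,x_k\}$ is a $\varphi$-component isomorphic to $G$ and $\{x_{k+1},\ldots,x_{k+l}\}$ is a $\psi$-component isomorphic to $H$ splits, given $\eta$, into independent pieces governed by disjoint sets of edge-marks: the internal edges of the first cluster (probability $\BP(\Gamma_\varphi(\{x_1,\ldots,x_k\})\simeq G)$, yielding the factor $p_{\varphi,G}(x_1,\ldots,x_k)$ together with the order indicator), the internal edges of the second cluster (the factor $p_{\psi,H}(x_{k+1},\ldots,x_{k+l})$), the cross edges between the two tuples, which must all be absent — and here the coupling $\psi\le\varphi$ matters, because the cross edge between $x_i$ and $x_j$ is an edge of $\Gamma_\varphi$ as soon as it is an edge of $\Gamma_\psi$, so the stronger constraint $\bar\varphi(x_i,x_j)$ is the binding one, giving $\prod_{i=1}^k\prod_{j=k+1}^{k+l}\bar\varphi(x_i,x_j)$ — and finally the edges from the remaining Poisson points $y\in\eta\setminus\{x_1,\ldots,x_{k+l}\}$ to the two clusters, contributing $\prod_{y}\prod_{i=1}^k\bar\varphi(x_i,y)\prod_{j=k+1}^{k+l}\bar\psi(x_j,y)$. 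Taking the expectation over $\eta$ via the multivariate Mecke equation turns the sum over tuples into a $\lambda^{k+l}$-integral, and the generating functional formula for the Poisson process (as in Proposition \ref{p1.2}) converts $\BE\prod_{y\in\eta}(\cdots)$ into the exponential $\exp[\int(\prod_{i=1}^k\bar\varphi(x_i,y)\prod_{j=k+1}^{k+l}\bar\psi(x_j,y)-1)\,\lambda(dy)]$. This reproduces the first term in the statement.

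The diagonal contribution is handled the same way with a single tuple $x_1,\ldots,x_k$: given $\eta$, the event that $\{x_1,\ldots,x_k\}$ is simultaneously a $\varphi$-component isomorphic to $G$ and a $\psi$-component isomorphic to $H$ factors into the internal part, with probability $\BP(\Gamma_\varphi(\{x_1,\ldots,x_k\})\simeq G,\ \Gamma_\psi(\{x_1,\ldots,x_k\})\simeq H)$ — these two internal events are \emph{not} independent, since they depend on the same edge marks, which is why this joint probability cannot be factored — and the external part, where every $y\in\eta\setminus\{x_1,\ldots,x_k\}$ must be non-adjacent to the cluster in $\Gamma_\varphi$ (which, by $\psi\le\varphi$, automatically gives non-adjacency in $\Gamma_\psi$), contributing $\prod_y\prod_{i=1}^k\bar\varphi(x_i,y)$; Mecke plus the generating functional formula then give the stated exponential. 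I expect the main obstacle — really more a point requiring care than a genuine difficulty — to be the bookkeeping of which $\sigma$-algebras the various sub-events depend on, so as to justify the conditional independence splitting: one must check that internal edges of cluster one, internal edges of cluster two, cross edges, and external edges use pairwise disjoint families of the i.i.d.\ marks $Z_{m,n}$, and that the coupling $\psi\le\varphi$ (both RCMs built from the \emph{same} uniform marks via \eqref{e2.5}) makes the $\varphi$-non-adjacency conditions subsume the $\psi$-ones, so that no double counting or missing constraint occurs. Everything else is a routine repetition of the argument in Proposition \ref{p1.2}.
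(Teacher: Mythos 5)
Your overall route is the same as the paper's: expand the product as a double sum over ordered tuples of Poisson points, classify the pairs of tuples by their overlap, compute the conditional probability given $\eta$ by exploiting the disjointness of the four families of marks involved (internal to each cluster, cross, external), and finish with the multivariate Mecke equation and the generating functional. The computations you describe for the disjoint case and for the identical case are right, including the observation that under the coupling the $\varphi$-non-adjacency constraints subsume the $\psi$-ones for the cross edges and for the external edges in the diagonal term, and that the two internal isomorphism events in the diagonal term share marks and therefore only yield a joint probability.

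The gap is the sentence ``Any partial overlap is impossible: two distinct components cannot share a vertex, and if the two tuples have the same minimum they are identical.'' Both justifications fail here because the two clusters are components of two \emph{different} graphs, $\Gamma_\varphi(\eta)$ and $\Gamma_\psi(\eta)$. What the coupling $\Gamma_\psi(\eta)\subset\Gamma_\varphi(\eta)$ actually gives is that the $\psi$-components refine the $\varphi$-components: if the two vertex sets meet, then $\{x_{k+1},\ldots,x_{k+l}\}\subset\{x_1,\ldots,x_k\}$, and this forces equality only when $l=k$ (or when $\psi=\varphi$). For $l<k$ proper containment occurs with positive probability. For instance, take $d=1$, $\varphi=\1\{|x|\le 1\}$, $\psi=\varphi/2$, $G$ the path on three vertices and $H$ the graph with one edge; on the event that $\eta$ has exactly three points $a\prec b\prec c$ in a region isolated from the rest of the process with $|a-b|,|b-c|\le 1<|a-c|$ and the marks satisfy $Z_{a,b}\le 1/2<Z_{b,c}$, the set $\{a,b,c\}$ is a $\varphi$-component isomorphic to $G$ while $\{a,b\}$ is a $\psi$-component isomorphic to $H$, and the two ordered tuples are neither disjoint nor identical (they need not even share their lexicographic minimum: replace $\{a,b\}$ by $\{b,c\}$). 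Such configurations are covered by neither term of the asserted identity, so your case analysis does not exhaust the double sum when $l<k$. To be fair, the paper's own proof asserts exactly the same dichotomy (``if and only if one of the following two cases occurs'') without justification, so you have reproduced rather than introduced the difficulty; but the reasons you give for excluding partial overlap are not valid, and your argument is complete only under the additional hypothesis $k=l$ or $\psi=\varphi$.
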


\begin{proof}
Let $x_1,\dots,x_{k+l}\in\eta$ be such that $x_1,\hdots,x_k$ are distinct and $x_{k+1},\hdots,x_{k+l}$ are distinct.
Then $G(x_1,\ldots,x_k,\Gamma_\varphi(\eta))$
is a component isomorphic to $G$ and
$G(x_{k+1},\ldots,{x_{k+l}},\Gamma_\psi(\eta))$
is a component isomorphic to $H$ if and only if one of the following two cases occurs.
In the first case we have
$\{x_1,\dots,x_{k}\}\cap\{x_{k+1},\dots,x_{k+l}\}=\emptyset$,
$G(x_1,\ldots,x_k,\Gamma_\varphi(\eta))\simeq G$,
$G(x_{k+1},\ldots,{x_{k+l}},\Gamma_\psi(\eta))\simeq H$,
no point from $\{x_1,\dots,x_{k}\}$ is connected with a point
from $\{x_{k+1},\dots,x_{k+l}\}$ via edges in $\Gamma_\varphi(\eta)$,
no point from $\{x_1,\dots,x_{k}\}$ is connected to $\eta\setminus\{x_1,\ldots,x_{k+l}\}$
via edges in $\Gamma_\varphi(\eta)$, and
no point from $\{x_{k+1},\dots,x_{k+l}\}$ is connected to $\eta\setminus\{x_1,\ldots,x_{k+l}\}$
via edges in $\Gamma_\psi(\eta)$. Given $\eta$, the conditional probability of this event
equals
\begin{align*}
&\BP(\Gamma_\varphi(\{x_1,\ldots,x_k\})\simeq G)
\BP(\Gamma_\psi(\{x_{k+1},\ldots,x_{k+l}\})\simeq H)\\
&\quad\times \prod^k_{i=1}\prod^{k+l}_{j=k+1}\bar\varphi(x_i,x_j)\prod_{y\in\eta\setminus\{x_1,\ldots,x_{k+l}\}}
\prod^k_{i=1}\bar\varphi(x_i,y)\prod^{k+l}_{j=k+1}\bar\psi(x_j,y).
\end{align*}
In the second case we have that $k=l$,
$\{x_1,\dots,x_{k}\}=\{x_{k+1},\dots,x_{2k}\}$,
$G(x_1,\ldots,x_k,\Gamma_\varphi(\eta))\simeq G$, $G(x_1,\ldots,x_k,\Gamma_\psi(\eta))\simeq H$  and none
of $x_1,\hdots,x_k$ is connected to a point in
$\eta\setminus\{x_1,\ldots,x_k\}$. The conditional probability of this event equals
$$
\BP(\Gamma_\varphi(\{x_1,\ldots,x_k\})\simeq G, \Gamma_\psi(\{x_1,\ldots,x_k\})\simeq H)
\prod_{y\in\eta\setminus\{x_1,\ldots,x_k\}}\prod^k_{i=1}\bar\varphi(x_i,y).
$$
The multivariate Mecke equation and the formula for the generating functional of a Poisson
process (combined with a symmetry argument) yield the result similarly as in the proof of Proposition \ref{p1.2}.
\end{proof}

\section{Another description of the RCM and difference operators}\label{sanother}

For our later purposes it is useful to define a version of $\xi$ (see \eqref{e2.3})
in terms of an independent marking $\eta^*$ of $\eta$ and to use
that $\eta^*$ is a Poisson process. As mark space
we take $\bY:= \bM^{\N\times\N}$, that is the set
of all double sequences $(u_{m,n})^\infty_{m,n=1}$ with values in $\bM$.
We define $\BQ:=\BM^{\N\times\N}$ so that $\BQ$ is the
distribution of a double sequence of independent random elements
with distribution $\BM$. Let $\eta^*$ be an independent
$\BQ$-marking of $\eta$ and note that $\eta^*$ is a Poisson process
on $\bX\times\bY$ with intensity measure $\lambda\otimes\BQ$;
see e.g.\ \cite[Theorem 5.6]{LastPenrose}.

We assert that for any $\varepsilon\in(0,1)$ there is a measurable partition $\{B^\varepsilon_k:k\in\N\}$
of the space $\bX$ such that $\lambda(B^\varepsilon_k)\le\varepsilon$
for each $k\in\N$. For a fixed $\varepsilon\in(0,1)$ this can be shown as follows. By the Borel property
of $\bX$ we may assume that $\bX$ is a Borel subset of the
interval $(0,1]$. Since $\lambda$ is $\sigma$-finite, there
exists a measurable partition $\{C_k:k\in\N\}$ of
$\bX$ such that $\lambda(C_k)<\infty$ for each $k\in\N$.
Now the assertion follows from the observation that one can choose for any Borel set $A\subset (0,1]$ with $\lambda(A)<\infty$ two disjoint Borel sets $A_1,A_2\subset(0,1]$ with $A_1\cup A_2=A$ and $\lambda(A_1)=\lambda(A_2)=\lambda(A)/2$. Indeed, the measure $\nu(B):=\lambda(A\cap B)$ for Borel sets $B\subset (0,1]$ is diffuse and finite so that there exists a $t\in(0,1]$ such that $\nu((0,t])=\nu((t,1])= \nu((0,1])/2=\lambda(A)/2$. Thus, the Borel sets $A_1:=A\cap (0,t]$ and $A_2:=A\cap (t,1]$ have the desired properties.
For $x \in \bX$ let $B^\varepsilon(x)$ be the unique element of the
partition $\{B^\varepsilon_k:k\in\N\}$
containing $x$. We can assume that
\begin{equation}\label{eqn:MonotonicityPartition}
B^{\varepsilon_1}(x)\subset B^{\varepsilon_2}(x)
\end{equation}
for all $x\in \bX$ and all $0<\varepsilon_1\le \varepsilon_2<1$. This can be achieved as follows.
First we refine the partitions such that
$B^{1/(n+1)}(x)\subset B^{1/n}(x)$ holds for all $x\in \bX$ and all
$n\in\N$.
Then we define $B^{\varepsilon}_k:=B^{1/(n+1)}_k$ for
$1/(n+1)\le\varepsilon< 1/n$
and $k\in\N$.

Take $(x,u)\in\eta^*$, where $u=(u_{m,n})_{m,n=1}^\infty$. Let $k\in\N$ and set
$$
r:=\card\big\{x'\in \eta\cap B^\varepsilon_k:x'\prec x\big\}.
$$
Let $x_1,\ldots,x_r\in\eta\cap B^\varepsilon_k$ be such that
$x_1\prec\cdots \prec x_r\prec x$.
For each $i\in[r]$
we define $U_\varepsilon(\eta^*,x,x_i):=u_{k,i}$.
Almost surely we have that for all distinct $x,x'\in\eta$ either
$x'\prec x$ or $x\prec x'$. In the first case $U_\varepsilon(\eta^*,x,x')$
and in the second $U_\varepsilon(\eta^*,x',x)$ is well defined by the above procedure.
For all other $x,x'\in\bX$ we let $U_\varepsilon(\eta^*,x',x):=u_0$
for some fixed $u_0\in \bM$.
Then
$$
\xi^*_\varepsilon:=\big\{\big(\{x,x'\},U_\varepsilon(\eta^*,x,x')\big):(x,x')\in\eta^2,\, x'\prec x \big\}
$$
is a point process on $\bX^{[2]}\times\bM$ satisfying $\xi^*_\varepsilon\overset{d}{=}\xi$, where $\overset{d}{=}$ denotes equality in distribution.
In fact we have $\xi^*_\varepsilon=T_\varepsilon(\eta^*)$ for a well-defined measurable mapping $T_\varepsilon\colon \bN(\bX\times\bY)\to \bN(\bX^{[2]}\times\bM)$, whence
\begin{equation} \label{eqn:XiEtaStar}
\xi \overset{d}{=} T_\varepsilon(\eta^*).
\end{equation}

Consider the edge marking $\xi$ of the Poisson process
$\eta$, defined by \eqref{e2.3}.
Difference operators play a fundamental role in the stochastic analysis of Poisson functionals, see e.g.\ \cite{LaPeSchu16,LastPenrose,PeccatiReitzner}.
In the following we generalize these operators, so as to apply
to functions of the point process $\xi$.
Let $L_\xi$ denote the space of all $\sigma(\xi)$-measurable
random elements of $\R$. For each $F\in L_\xi$ there is a measurable $f\colon \bN(\bX^{[2]}\times\bM)\to\R$
such that $F=f(\xi)$ almost surely. We call $f$ a {\em representative} of $F$.
Our results will not depend on the choice of $f$.

We extend the (double) sequence
$(Z_{m,n})_{m,n=1}^\infty$ featuring in \eqref{e2.3} to a sequence
$(Z_{m,n})_{m,n \in \Z}$ of independent $\bM$-valued random elements
with distribution $\BM$, independent of the Poisson process $\eta$.
Let $k\in\N$, $x_1,\ldots, x_k \in \bX$ and $I\subset [k]$.
We define a point process $\xi_{(x_i)_{i \in I}}$ on $\bX^{[2]}\times\bM$ by
\begin{align*}
\xi_{(x_i)_{i \in I}}
:=\{(\{X_m,X_n \}, Z_{m,n}):X_m \prec X_n, \,m,n \in[\kappa] \cup \{ -i : i \in I \}\}
\end{align*}
where $X_{-i} := x_i$, $i\in I$, and $X_i$, $i\in[\kappa]$, are the points of $\eta$. Note that $\xi_{(x_i)_{i \in I}}=\xi$
if $I=\emptyset$. In the case $I=[k]$ we write
$\xi_{x_1,\ldots,x_k}$ instead of $\xi_{(x_i)_{i \in I}}$.
Given $x_1,\ldots,x_k \in \bX$ and $I\subset[k]$, the point process $\xi_{(x_i)_{i \in I}}$
is a measurable function of $\xi_{x_1,\ldots,x_k}$.
The joint distribution of $(\eta, \xi_{x_1,\ldots,x_k})$ is a measurable function of $x_1,\ldots,x_k \in \mathbf{X}$ that is invariant under permutations.

The  {\em multivariate Mecke equation} for Poisson processes (see e.g.\ \cite[Theorem 4.4]{LastPenrose})
can easily be extended to measurable functions
$g\colon \bX^k\times\bN(\bX^{[2]}\times\bM)\to [0,\infty)$.
We have
\begin{align}\label{Mecke}
\BE \sideset{}{^{\ne}}\sum_{(x_1,\ldots,x_k)\in\eta^k} g(x_1,\ldots,x_k,\xi)
=\int \BE g(x_1,\ldots,x_k,\xi_{x_1,\ldots,x_k})\,\lambda^k(d(x_1,\ldots,x_k)),
\end{align}
where the sum on the left extends over all $(x_1,\ldots,x_k)\in\eta^k$
such that $x_i\ne x_j$ for $i\ne j$.

Let $F\in L_\xi$ have representative $f$.
For each $k\in\N$ and all $x_1,\ldots, x_k \in \bX$ we
define a random variable $\Delta^k_{x_1,\ldots,x_k}F$ by
\begin{align}\label{diffop}
\Delta^k_{x_1,\ldots,x_k}F:= \sum_{I\subset [k]} (-1)^{k -|I|} f\big(\xi_{(x_i)_{i \in I}}\big).
\end{align}
In particular, this gives us
$$
\Delta_{x_1} F := \Delta^1_{x_1} F :=f(\xi_{x_1})-f(\xi) \quad \text{and} \quad
\Delta^2_{x_1,x_2} F:=f(\xi_{x_1,x_2})-f(\xi_{x_1})-f(\xi_{x_2})+f(\xi).
$$
The definition \eqref{diffop} is justified since for two representatives $f_1$ and $f_2$ of $F$ by the Mecke equation \eqref{Mecke} for any $m\in\N$, $f_1(\xi_{x_1,\hdots,x_m})=f_2(\xi_{x_1,\hdots,x_m})$ $\P$-a.s.\ for $\lambda^{m}$-a.e.\ $(x_1,\ldots,x_m)\in\bX^m$.

For $\varepsilon>0$ and $F\in L_\xi$ with representative $f$ let $F^*_\varepsilon=f\big(T_\varepsilon(\eta^*)\big)$, see \eqref{eqn:XiEtaStar}. For $k\in\N$, $x_1,\hdots,x_k\in\bX$ and $y_1,\hdots,y_k\in\bY$ define
$$
F^*_{\varepsilon,(x_i,y_i)_{i\in I}}:=f\circ T_\varepsilon\big(\eta^*\cup\{(x_i,y_i):i\in I\}\big)
$$
for $I\subset [k]$, which equals $F^*_\varepsilon$ in the case $I=\emptyset$. Further let
$$
D^k_{(x_1,y_1),\hdots,(x_k,y_k)}F^*_\varepsilon:=\sum_{I\subset [k]} (-1)^{k -|I|}F^*_{\varepsilon,(x_i,y_i)_{i\in I}}.
$$
Note that this is the usual iterated difference operator for functions of the Poisson process $\eta^*$, see e.g.\ \cite[Equation (18.3)]{LastPenrose}.
In the following we link the difference operators $D$ and $\Delta$ in order to transfer results for the Poisson process $\eta^*$ to the edge marked Poisson process $\xi$.

From now on let $(Y_n)_{n\in\N}$ be independent random elements of $\bY$ with distribution $\mathbb{Q}$ which are independent from everything else. For $\varepsilon>0$ and $k\in\N$ assume that $x_1\in B^\varepsilon_{n_1},\hdots, x_k\in B^\varepsilon_{n_k}$ with distinct $n_1,\hdots,n_k\in\N$.
Then we obtain from the preceding construction that
\begin{equation}\label{eqn:DistributionalIdentity}
\1\big\{\eta\big(B_{n_i}^\varepsilon\big)=0, i\in[k]\big\} \big(1, (F^*_{\varepsilon, (x_i,Y_i)_{i\in I}})_{I\subset [k]} \big) \overset{d}{=} \1\big\{\eta\big(B_{n_i}^\varepsilon\big)=0, i\in[k]\big\} \big(1, (f\big(\xi_{(x_i)_{i\in I}}\big))_{I\subset [k]} \big).
\end{equation}
Assume now that $f$ is bounded, that is $\|f\|_\infty:=\sup\{|f(\mu)|: \mu\in\mathbf{N}(\bX^{[2]}\times\bM)\}<\infty$, and let $g: \R^{2^k}\to\R$ be measurable and bounded. Then \eqref{eqn:DistributionalIdentity} implies that
\begin{align} \label{eqn:IdentityDDelta} \notag
\BE \1\big\{\eta\big(B^\varepsilon(x_i)\big)=0, &\, i\in[k]\big\} g\Big(\big(D^{|I|}_{(x_i,Y_i)_{i\in I}}F^*_\varepsilon\big)_{I\subset[k]}\Big) \\
& = \BE \1\big\{\eta\big(B^\varepsilon(x_i)\big)=0, i\in[k]\big\}g\Big(\big(\Delta^{|I|}_{(x_i)_{i\in I}}F\big)_{I\subset[k]}\Big) .
\end{align}
Note that \eqref{eqn:DistributionalIdentity} and \eqref{eqn:IdentityDDelta} are both subject to the condition that $B^\varepsilon(x_1),\hdots,B^\varepsilon(x_k)$ are distinct. But because of $\lambda(B^{\varepsilon}(x))\to 0$ as $\varepsilon\to 0$ and the monotonicity property \eqref{eqn:MonotonicityPartition}, we have that
\begin{equation}\label{eqn:x1..xk}
\lim_{\varepsilon\to 0} \1\big\{B^\varepsilon(x_1),\hdots,B^\varepsilon(x_k) \text{ are distinct}\big\}=1
\end{equation}
for $\lambda^k$-a.e.\ $(x_1,\hdots,x_k)\in\bX^k$.

\section{Variance formulas}\label{secdiffop}

In order to deduce an exact variance representation for a function of an edge-marked Poisson process, we use a further construction to obtain an edge marking of a Poisson process. In the following let $\hat{\eta}$ be a Poisson process on $\mathbf{X}\times [0,1]\times \bM^{\N\times\N}$ with intensity measure $\lambda \otimes \lambda_1|_{[0,1]}\otimes \mathbb{Q}$.
For a point $\hat{x}\in\hat{\eta}$ we interpret the first component as a location in $\mathbf{X}$, the second component as birth time and the third component as a double sequence of marks.
For $\mu\in\mathbf{N}\big(\mathbf{X}\times [0,1]\times \bM^{\N\times\N}\big)$ and $s,t\in[0,1]$ with $s<t$ we denote by $\mu_{[s,t)}$ the restriction of $\mu$ to $\mathbf{X}\times[s,t)\times \bM^{\N\times\N}$. For $t\in[0,1]$ we write $\mu_t:=\mu_{[0,t)}$ and $\BE[\cdot | \hat{\eta}_t]$ stands for the conditional expectation with respect to the sigma-field generated by $\hat{\eta}_t$.

For some $\varepsilon\in(0,1)$ let $\{B_k^{\varepsilon}: k\in\N\}$ be a measurable partition of $\bX$ such that $\lambda(B_k^{\varepsilon})\leq \varepsilon$ for $k\in\mathbb{N}$ as in Section \ref{sanother}. From $\hat{\eta}$ we can derive an independent edge marking in the following way. For $\big(x_1,t_1,(u^{(1)}_{i,j})\big), \big(x_2,t_2,(u^{(2)}_{i,j})\big) \in\hat{\eta}$ with $0\leq t_1<t_2\leq 1$, we mark the edge $\{x_1,x_2\}$ according to the following rule. If $x_1\in B^{\varepsilon}_n$, we order all points of $\hat{\eta}$ in $B^{\varepsilon}_n$ according to their birth times. Assume that $x_1$ is the $m$-th oldest of these points. Then we mark the edge $\{x_1,x_2\}$ with $u^{(2)}_{n,m}$. Formally, we can think of this construction as a measurable map
$T: \mathbf{N}\big(\bX \times [0,1]\times \bM^{\N\times\N}\big)\to \mathbf{N}\big(\bX^{[2]} \times \bM\big)$.
For $\mu\in\mathbf{N}\big(\bX\times [0,1]\times \bM^{\N\times\N}\big)$ and $(x,t,M)\in \mu$ we define $T(\mu)\setminus\{x\}$ as $T(\mu)$ without the point $x$ and all corresponding edge marks.

\begin{theorem}\label{thm:VarianceRepresentation}
Let $F=f(T(\hat{\eta}))$ with $f:\bN(\bX^{[2]}\times\bM)\to \R$ measurable and $\BE F^2<\infty$. Then
$$
\V F = \int \int_0^1  \int \BE\big[ \BE\big[ f(T(\hat{\eta}\cup \{(x,t,M)\})) - f(T(\hat{\eta}\cup \{(x,t,M)\})\setminus\{x\}) | \hat{\eta}_t \big]^2 \big] \, \mathbb{Q}(dM) \,\, dt \,  \lambda(dx).
$$
\end{theorem}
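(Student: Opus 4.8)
The plan is to regard $F=f(T(\hat\eta))$ as a square-integrable functional of the Poisson process $\hat\eta$ on $\bX\times[0,1]\times\bM^{\N\times\N}$ with intensity measure $\lambda\otimes\lambda_1|_{[0,1]}\otimes\mathbb{Q}$, where the $[0,1]$-coordinate is read as a birth time, and to apply the variance representation for functionals of Poisson processes established in the appendix. Using the filtration $(\mathcal F_t)_{t\in[0,1]}$ with $\mathcal F_t:=\sigma(\hat\eta_t)$ and writing $D_{(x,t,M)}F:=f(T(\hat\eta\cup\{(x,t,M)\}))-f(T(\hat\eta))$ for the add-one-cost operator, that representation gives
\[
\V F=\int\int_0^1\int \BE\big[\BE[D_{(x,t,M)}F\mid\hat\eta_t]^2\big]\,\mathbb{Q}(dM)\,dt\,\lambda(dx),
\]
which is applicable because $\BE F^2<\infty$. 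Comparing this with the asserted formula, it then suffices to prove that for $\lambda\otimes\lambda_1|_{[0,1]}\otimes\mathbb{Q}$-almost every $(x,t,M)$ one has
\[
\BE\big[f(T(\hat\eta))\mid\hat\eta_t\big]=\BE\big[f(T(\hat\eta\cup\{(x,t,M)\})\setminus\{x\})\mid\hat\eta_t\big];
\]
subtracting this identity from $\BE[f(T(\hat\eta\cup\{(x,t,M)\}))\mid\hat\eta_t]$ turns $\BE[D_{(x,t,M)}F\mid\hat\eta_t]$ into exactly the inner conditional expectation appearing in the theorem.

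To prove the displayed identity I would condition on the $\sigma$-field $\mathcal G$ generated by $\hat\eta_t$ together with the locations and birth times of all points of $\hat\eta$ born after time $t$. Given $\mathcal G$, the double mark sequences $(S^{(y)})_y$ of the points $y$ of $\hat\eta$ born after $t$ are independent with distribution $\mathbb{Q}$, and both $T(\hat\eta)$ and $T(\hat\eta\cup\{(x,t,M)\})\setminus\{x\}$ are, for $\mathcal G$ fixed, deterministic functions of $(S^{(y)})_y$ — the mark sequences of the earlier points being part of $\hat\eta_t$, and the second object not depending on $M$ since all edges incident to $x$ have been deleted. Both are complete edge-marked graphs on the same vertex set, namely the projection of $\hat\eta$ onto $\bX$, so only the edge marks need to be compared. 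Inserting the extra point $x\in B^\varepsilon_n$ with birth time $t$ raises by one the birth-rank, within the cell $B^\varepsilon_n$, of every point of $\hat\eta$ lying in $B^\varepsilon_n$ and born after $t$, and assigns to $x$ the rank $j_n+1$, where $j_n$ denotes the ($\mathcal G$-measurable) number of points of $\hat\eta$ in $B^\varepsilon_n$ born before $t$; the ranks of all points born before $t$ are unchanged, hence so are the marks of all edges whose older endpoint is born before $t$. Tracing through the definition of $T$, the only edge marks that change are those of edges $\{x_1,x_2\}$ whose older endpoint $x_1$ lies in $B^\varepsilon_n$ and is born after $t$ (so that $x_2$ too is born after $t$): such a mark is an entry of the $n$-th row of the i.i.d.\ sequence $S^{(x_2)}$, and the change consists only of shifting the index by one past the slot $j_n+1$, which in $T(\hat\eta\cup\{(x,t,M)\})$ is the slot used by the now-deleted edge $\{x,x_2\}$.

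This observation can be encoded by an explicit transformation $\Theta$ on $\prod_{y}\bM^{\N\times\N}$ that acts on each later point's double sequence by deleting the $(j_n+1)$-th entry of its $n$-th row and shifting the remaining entries of that row, leaving all other rows untouched; by the preceding analysis, if $\hat\eta'$ denotes $\hat\eta$ with the mark sequences of the points born after $t$ replaced by their images under $\Theta$, then $T(\hat\eta')=T(\hat\eta\cup\{(x,t,M)\})\setminus\{x\}$ for the given realisation. Since deleting a fixed coordinate from an i.i.d.\ sequence leaves it i.i.d.\ with the same marginal, $\Theta$ maps the (product) law of $(S^{(y)})_y$ to itself, so $f(T(\hat\eta\cup\{(x,t,M)\})\setminus\{x\})$ and $f(T(\hat\eta))$ have the same conditional distribution given $\mathcal G$, hence the same conditional expectation; applying $\BE[\,\cdot\mid\hat\eta_t]$ and using $\sigma(\hat\eta_t)\subseteq\mathcal G$ yields the required identity. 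The exceptional events $\{(x,t,M)\in\hat\eta\}$ and $\{t\text{ is a birth time of some point of }\hat\eta\}$ occur with probability zero for $\lambda\otimes\lambda_1|_{[0,1]}$-almost every $(x,t)$ and are absorbed into the ``almost every'' in the conclusion.

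The main obstacle is the combinatorial bookkeeping in the second and third paragraphs: one must verify carefully that inserting $x$ affects only the edge marks described above — in particular that the birth-ranks of all points born before $t$ really are left unchanged, so that every edge whose older endpoint is born before $t$ keeps its mark — and that the rearrangement $\Theta$ intertwines the two edge-markings realisation by realisation while preserving the independence of the points' mark sequences and the common law $\mathbb{M}$ of their coordinates. Once this single-realisation identity $T(\hat\eta')=T(\hat\eta\cup\{(x,t,M)\})\setminus\{x\}$ is in place, the measure-preserving argument and the appeal to the appendix's variance representation are routine.
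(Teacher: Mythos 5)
Your proposal is correct and takes essentially the same route as the paper: first apply the appendix variance representation to $\hat\eta$ viewed as a Poisson process with birth times, then replace $f(T(\hat\eta))$ by $f(T(\hat\eta\cup\{(x,t,M)\})\setminus\{x\})$ inside the conditional expectation using the fact that these two random elements have the same conditional distribution given $\hat\eta_t$. The only difference is one of detail: the paper asserts that this distributional identity ``follows from the construction of the edge marking,'' whereas you verify it explicitly via the rank-shift bookkeeping and the measure-preserving deletion map $\Theta$ on the i.i.d.\ mark sequences of the points born after $t$ — a correct and welcome elaboration of the step the paper leaves implicit.
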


\begin{proof}
It follows from Theorem \ref{thm:appendix.variance.repre} in the appendix that
$$
\V F = \int \int_0^1 \int \BE\big[ \BE\big[ f(T(\hat{\eta}\cup \{(x,t,M)\})) - f(T(\hat{\eta})) | \hat{\eta}_t \big]^2 \big] \, \mathbb{Q}(dM) \,  dt \, \lambda(dx).
$$
For $t\in[0,1]$ let $\mathbb{P}_{[t,1)}$ be the distribution of $\hat{\eta}_{[t,1)}$. For $(x,t,M)\in \mathbf{X}\times [0,1]\times \bM^{\N\times\N}$, $\zeta$ distributed according to $\mathbb{P}_{[t,1)}$ and $\mu\in \mathbf{N}\big(\bX \times [0,t)\times \bM^{\N\times\N}\big)$ it follows from the above construction of the edge marking that $T(\mu\cup\zeta)$ and $T(\mu\cup\zeta\cup\{(x,t,M)\})\setminus\{x\}$ have the same distribution. Thus, we obtain that
\begin{align*}
& \BE\big[ f(T(\hat{\eta}\cup \{(x,t,M)\})) - f(T(\hat{\eta})) | \hat{\eta}_t \big] \\
& = \int f\big(T\big(\hat{\eta}_t\cup\zeta\cup \{(x,t,M)\}\big)\big) - f\big(T\big(\hat{\eta}_t\cup\zeta\big)\big) \, \mathbb{P}_{[t,1)}(d\zeta) \\
& = \int f\big(T\big(\hat{\eta}_t\cup\zeta\cup \{(x,t,M)\}\big)\big) - f\big(T\big(\hat{\eta}_t\cup\zeta\cup \{(x,t,M)\}\big)\setminus\{x\}\big) \, \mathbb{P}_{[t,1)}(d\zeta) \\
& = \BE\big[ f(T(\hat{\eta}\cup \{(x,t,M)\})) - f(T(\hat{\eta}\cup \{(x,t,M)\})\setminus\{x\}) | \hat{\eta}_t \big],
\end{align*}
which completes the proof.
\end{proof}

From Theorem \ref{thm:VarianceRepresentation} we can deduce the following {\em Poincar\'e inequality}
for square integrable functionals of $\xi$. For the Poincar\'e inequality for Poisson functionals see e.g.\ \cite[Theorem 18.7]{LastPenrose}.

\begin{theorem}\label{tpoincare} Let $F\in L_\xi$ satisfy
$\BE F^2<\infty$. Then
\begin{align}\label{epoincare}
\V F &\le \int \BE (\Delta_xF)^2 \, \lambda(dx).
\end{align}
\end{theorem}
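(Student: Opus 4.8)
The plan is to derive the inequality from the exact variance representation in Theorem \ref{thm:VarianceRepresentation} by bounding the conditional expectation using Jensen's inequality, and then to identify the resulting quantity with the expected squared difference operator $\Delta_x F$. First I would reduce to the case of a bounded representative $f$: given $F\in L_\xi$ with $\BE F^2<\infty$, truncate $f$ at level $L$ to obtain $f_L$ with $\|f_L\|_\infty\le L$, set $F_L := f_L(T(\hat\eta))$, observe that $F_L\to F$ in $L^2$ (so $\V F_L\to\V F$), and note that the right-hand side of \eqref{epoincare} for $F_L$ is dominated by that for $F$ (the difference operator of a truncation is controlled by the original), so it suffices to prove the inequality for bounded $f$.

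For bounded $f$, I would start from the identity in Theorem \ref{thm:VarianceRepresentation}, written with the integrand
$$
\BE\big[\,\BE[\,f(T(\hat\eta\cup\{(x,t,M)\})) - f(T(\hat\eta\cup\{(x,t,M)\})\setminus\{x\})\mid\hat\eta_t\,]^2\,\big].
$$
By the conditional Jensen inequality, $\BE[\,\cdot\mid\hat\eta_t\,]^2\le\BE[(\cdot)^2\mid\hat\eta_t]$, so after taking the outer expectation the integrand is at most
$$
\BE\big[\big(f(T(\hat\eta\cup\{(x,t,M)\})) - f(T(\hat\eta\cup\{(x,t,M)\})\setminus\{x\})\big)^2\big].
$$
Now the key point: by the Mecke equation for the Poisson process $\hat\eta$ on $\bX\times[0,1]\times\bM^{\N\times\N}$, adding a point $(x,t,M)$ and then integrating $t$ over $[0,1]$ and $M$ over $\mathbb{Q}$ and $x$ over $\lambda$ exactly reproduces the law of $\hat\eta$ with a distinguished point; more precisely, for any nonnegative measurable $h$,
$$
\int\!\int_0^1\!\int \BE\big[h(x,t,M,\hat\eta\cup\{(x,t,M)\})\big]\,\mathbb{Q}(dM)\,dt\,\lambda(dx)
= \BE\!\!\sum_{(x,t,M)\in\hat\eta}\!\! h(x,t,M,\hat\eta).
$$
Applying this with $h$ equal to the squared difference above, and recalling from Section \ref{sanother} that the construction $T$ followed by deletion of the point $x$ is exactly the mechanism turning the marking $\hat\eta$ at $x$ on or off, I get that $\int\BE(\Delta_x F)^2\,\lambda(dx)$ equals (or rather, by the distributional identity \eqref{eqn:DistributionalIdentity} linking $T(\hat\eta)$ with $\xi$ and its point-added versions $\xi_x$) the double integral bound obtained above. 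Combining the Jensen bound with this identification yields $\V F\le\int\BE(\Delta_x F)^2\,\lambda(dx)$.

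The main obstacle I anticipate is the bookkeeping in the last step: one must carefully match the object "$f(T(\hat\eta\cup\{(x,t,M)\})) - f(T(\hat\eta\cup\{(x,t,M)\})\setminus\{x\})$", where $x$ is a genuine point of the enlarged configuration with a birth time $t$ and mark sequence $M$, with the difference operator "$f(\xi_x) - f(\xi) = \Delta_x F$" defined in Section \ref{secdiffop} for the edge-marked process, where $x$ is an external point not in $\eta$. This requires invoking the distributional identity \eqref{eqn:DistributionalIdentity} (and \eqref{eqn:x1..xk} to discard the null event where $x$ falls in a partition cell already occupied), together with the Mecke equation \eqref{Mecke}, to see that the $t$- and $M$-averaged conditional-free version of the first expression has, for $\lambda$-a.e.\ $x$, the same distribution as $\Delta_x F$. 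Once this correspondence is set up cleanly, the inequality follows immediately; the analytic content is entirely in Theorem \ref{thm:VarianceRepresentation} and conditional Jensen.
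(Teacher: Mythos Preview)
Your approach is essentially the paper's: start from Theorem \ref{thm:VarianceRepresentation}, apply conditional Jensen, and identify the resulting expression with $\int\BE(\Delta_xF)^2\,\lambda(dx)$. The paper's argument is shorter in two respects. First, no truncation is needed: Theorem \ref{thm:VarianceRepresentation} already applies to any square-integrable $F$, and the subsequent Jensen step is purely an inequality. Second, and more importantly, the identification step requires neither the Mecke equation nor the identity \eqref{eqn:DistributionalIdentity}. The latter pertains to the construction $T_\varepsilon$ of Section \ref{sanother}, whereas Theorem \ref{thm:VarianceRepresentation} uses the birth-time construction $T$ of Section \ref{secdiffop}; these are different objects. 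For the $T$-construction one checks directly that, for each fixed $(x,t)$ and $M$ drawn from $\BQ$, the pair
\[
\big(T(\hat\eta\cup\{(x,t,M)\}),\,T(\hat\eta\cup\{(x,t,M)\})\setminus\{x\}\big)
\]
has the same joint distribution as $(\xi_x,\xi)$, so that the squared difference has the law of $(\Delta_xF)^2$ for every $t$. Integrating $t$ over $[0,1]$ and $M$ over $\BQ$ then gives $\int\BE(\Delta_xF)^2\,\lambda(dx)$ immediately, without any detour through a sum over $\hat\eta$.
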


\begin{proof}
Let $f$ be a representative of $F$. Using the Jensen inequality and the fact that $f(T(\hat{\eta}\cup \{(x,t,M)\})) - f(T(\hat{\eta}\cup \{(x,t,M)\})\setminus\{x\})$ has the same distribution as $\Delta_x F$, we obtain from Theorem \ref{thm:VarianceRepresentation} that
\begin{align*}
\V F & \leq \int \int_0^1 \int \BE ( f(T(\hat{\eta}\cup \{(x,t,M)\})) - f(T(\hat{\eta}\cup \{(x,t,M)\})\setminus\{x\}) )^2 \, \Lambda(dM) \, dt \, \lambda(dx) \\
&  = \int\BE ( \Delta_xF )^2  \, \lambda(dx),
\end{align*}
which is the desired inequality.
\end{proof}

\section{Normal approximation}\label{sec:GeneralResultNormalApproximation}\label{sec:normal.approx}

As before let $\xi$ be the edge marking  of the Poisson process
$\eta$,  defined by \eqref{e2.3}.
We consider a random variable $F\in L_\xi$ with $\BE F^2<\infty$
and denote by $N$ a standard normal
random variable. In this section we derive upper bounds for the Wasserstein distance
$$
d_1(F,N):=\sup_{h\in {\operatorname{Lip}}(1)} |\BE h(F)-\BE h(N)|
$$
between $F$ and $N$,
where ${\operatorname{Lip}}(1)$ is the set of all functions
$h:\R\to\R$ with a Lipschitz constant less than or equal to one.
We also study the Kolmogorov distance
\begin{align}\label{eKolmogorov}
d_K(F,N):=\sup_{t \in \R} |\BP(F\leq t)-\BP(N\leq t)|.
\end{align}
Our aim is to extend Theorems 1.1 and 1.2 from \cite{LaPeSchu16}, treating functions of Poisson processes,
to the present (more general) setting of underlying edge marked Poisson processes.

Our bounds on the Wasserstein distance are based on the
following three terms:
\begin{align*}
\gamma_1 & :=  2\bigg[\int \big[\BE (\Delta_{x_1}F)^2 (\Delta_{x_2}F)^2\big]^{1/2}
\Big[\BE \big(\Delta_{x_1,x_3}^2F\big)^2\big(\Delta_{x_2,x_3}^2F\big)^2\Big]^{1/2}\, \lambda^3(d(x_1,x_2,x_3))
\bigg]^{1/2}, \allowdisplaybreaks \\
\gamma_2 & := \bigg[\int \BE \big(\Delta_{x_1,x_3}^2F\big)^2\big(\Delta_{x_2,x_3}^2F\big)^2 \,\lambda^3(d(x_1,x_2,x_3))
\bigg]^{1/2}, \allowdisplaybreaks \\
\gamma_3 & := \int \BE |\Delta_xF|^3 \, \lambda(dx).
\end{align*}
The bounds on the Kolmogorov distance involve three more terms:
\begin{align*}
\gamma_4 & := \frac{1}{2} \big[\BE F^4\big]^{1/4} \int \big[\BE (\Delta_xF)^4\big]^{3/4} \, \lambda(dx), \allowdisplaybreaks\\
\gamma_5 & := \bigg[\int \BE(\Delta_xF)^4 \, \lambda(dx)\bigg]^{1/2}, \allowdisplaybreaks\\
\gamma_6 & := \bigg[\int 6\big[\BE(\Delta_{x_1}F)^4\big]^{1/2} \Big[\BE\big(\Delta^2_{x_1,x_2}F\big)^4\Big]^{1/2}
+3\,\BE\big(\Delta^2_{x_1,x_2}F\big)^4 \, \lambda^{2}(d(x_1,x_2))\bigg]^{1/2}.
\end{align*}
Since the quantities $\gamma_1,\hdots,\gamma_6$ depend basically only on the first two difference operators, the following result says that the first two difference operators are sufficient to control the closeness to a standard normal random variable $N$.

\begin{theorem}\label{thm:General.CLT}
Let $F\in L_\xi$ be
such  that $\BE F=0$, $\V F=1$ and $\BE F^4<\infty$.
Assume also that $\gamma_5,\gamma_6<\infty$, that
\begin{equation}\label{eqn:Integrability1}
\int \big[\BE (\Delta_{x_1}F)^4\big]^{1/4} \big[\BE (\Delta_{x_2}F)^4\big]^{1/4}
\Big[\BE \big(\Delta_{x_1,x_3}^2F\big)^4\Big]^{1/4}\Big[\BE  \big(\Delta_{x_2,x_3}^2F\big)^4\Big]^{1/4}\, \lambda^3(d(x_1,x_2,x_3)) < \infty
\end{equation}
and that
\begin{equation}\label{eqn:Integrability2}
\int \Big[\BE \big(\Delta_{x_1,x_3}^2F\big)^4\Big]^{1/2} \Big[\BE\big(\Delta_{x_2,x_3}^2F\big)^4\Big]^{1/2} \,\lambda^3(d(x_1,x_2,x_3) <\infty.
\end{equation}
Then
$$
d_1(F,N)\le\gamma_1+\gamma_2+\gamma_3 \quad \text{and} \quad d_K(F,N)\le\gamma_1+\gamma_2+\gamma_3+\gamma_4+\gamma_5+\gamma_6.
$$
\end{theorem}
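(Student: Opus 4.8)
The plan is to reduce the statement to the normal approximation bounds for Poisson functionals from \cite{LaPeSchu16} by means of the representation $\xi\overset{d}{=}T_\varepsilon(\eta^*)$ in \eqref{eqn:XiEtaStar}. Fix a representative $f$ of $F$. For $\varepsilon\in(0,1)$ the random variable $F^*_\varepsilon=f(T_\varepsilon(\eta^*))$ is a measurable function of the Poisson process $\eta^*$ on $\bX\times\bY$ with intensity $\lambda\otimes\BQ$, and $F^*_\varepsilon\overset{d}{=}F$; in particular $\BE F^*_\varepsilon=0$, $\V F^*_\varepsilon=1$ and $\BE(F^*_\varepsilon)^4=\BE F^4<\infty$. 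Let $\gamma^*_1(\varepsilon),\dots,\gamma^*_6(\varepsilon)$ be the quantities of \cite[Theorems~1.1 and 1.2]{LaPeSchu16} for the functional $F^*_\varepsilon$ of $\eta^*$, i.e.\ the expressions defining $\gamma_1,\dots,\gamma_6$ with the operators $\Delta$ replaced by the difference operators $D$ of $\eta^*$ and $\lambda$ replaced by $\lambda\otimes\BQ$, and define the $D$-analogues of conditions \eqref{eqn:Integrability1} and \eqref{eqn:Integrability2} correspondingly. Once these $D$-analogues are finite, \cite[Theorems~1.1 and 1.2]{LaPeSchu16} applied to $F^*_\varepsilon$ give, for every $\varepsilon\in(0,1)$,
\begin{equation*}
d_1(F,N)=d_1(F^*_\varepsilon,N)\le\gamma^*_1(\varepsilon)+\gamma^*_2(\varepsilon)+\gamma^*_3(\varepsilon),\qquad
d_K(F,N)=d_K(F^*_\varepsilon,N)\le\sum_{j=1}^{6}\gamma^*_j(\varepsilon).
\end{equation*}
Hence it suffices to prove that the $D$-analogues of \eqref{eqn:Integrability1}, \eqref{eqn:Integrability2} are finite for all small $\varepsilon$ and that
\begin{equation*}
\limsup_{\varepsilon\to0}\gamma^*_j(\varepsilon)\le\gamma_j,\qquad j=1,\dots,6.
\end{equation*}
We may assume throughout that the right-hand sides of the two bounds are finite; note that by the Cauchy--Schwarz inequality $\gamma_1$ and $\gamma_2$ are dominated by the square roots of the left-hand sides of \eqref{eqn:Integrability1} and \eqref{eqn:Integrability2}, which are finite by hypothesis, so this requires only the additional assumptions $\gamma_3<\infty$ (resp.\ $\gamma_3,\gamma_4<\infty$), the corresponding bound being trivial otherwise.

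For $\gamma_2,\gamma_3,\gamma_5$ there is no inner root between the difference operators and the $\lambda\otimes\BQ$-integration, so carrying out the $\BQ$-integration rewrites $\gamma^*_j(\varepsilon)$ as an integral over $\bX^k$ ($k\le3$) of $\BE$ of an expression in the operators $D^{|I|}_{(x_i,Y_i)_{i\in I}}F^*_\varepsilon$, where $Y_1,\dots,Y_k$ are now independent $\BQ$-distributed and independent of $\eta^*$. For $\gamma_1,\gamma_4,\gamma_6$, which contain an inner root, an application of the Cauchy--Schwarz (resp.\ Jensen) inequality to the $\BQ$-integrals bounds $\gamma^*_j(\varepsilon)$ from above by a quantity of exactly this shape. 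In every case we then insert $1=\1\{\text{good}\}+\1\{\text{bad}\}$ into the innermost expectations, where ``good'' denotes the event that the cells $B^\varepsilon(x_1),\dots,B^\varepsilon(x_k)$ are pairwise distinct and $\eta$-empty. On $\{\text{good}\}$ the distributional identity \eqref{eqn:DistributionalIdentity} (equivalently \eqref{eqn:IdentityDDelta}) identifies the joint law of $(D^{|I|}_{(x_i,Y_i)_{i\in I}}F^*_\varepsilon)_{I\subset[k]}$ with that of $(\Delta^{|I|}_{(x_i)_{i\in I}}F)_{I\subset[k]}$, so the ``good part'' is the corresponding $\Delta$-expression carrying the extra indicator factor $\1\{\text{good}\}$. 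Since the partitions are nested by \eqref{eqn:MonotonicityPartition} and $\lambda(B^\varepsilon(x))\le\varepsilon$, we have $\1\{\eta(B^\varepsilon(x_i))=0,\,i\in[k]\}\uparrow1$ for $\lambda^k$-a.e.\ $(x_1,\dots,x_k)$ as $\varepsilon\downarrow0$, while $\1\{B^\varepsilon(x_1),\dots,B^\varepsilon(x_k)\text{ distinct}\}\to1$ $\lambda^k$-a.e.\ by \eqref{eqn:x1..xk}; as the indicator sits inside the innermost expectations and is monotone, monotone convergence shows that the good part converges to $\gamma_j$.

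What remains — and is the main obstacle — is to show that the ``bad part'', namely the contribution of configurations with some $B^\varepsilon(x_i)$ $\eta$-non-empty or two cells coinciding (including the mixed ``good--bad'' cross terms produced by the subadditivity of the roots), is finite for all small $\varepsilon$ and tends to $0$ as $\varepsilon\to0$. Two facts are available. First, $\BP(\eta(B^\varepsilon(x))>0)\le\lambda(B^\varepsilon(x))\le\varepsilon$, and the ``coinciding cells'' part of the domain is $\lambda^k$-negligible in the limit by \eqref{eqn:x1..xk}. Second — this is what keeps the $D$-differences from blowing up on $\{\text{bad}\}$ — the construction of $T_\varepsilon$ assigns distinct edges distinct (hence independent) entries of the mark sequences, so $T_\varepsilon(\eta^*\cup\{(x,y)\})\overset{d}{=}\xi_x$ for every $\varepsilon$ and $\lambda\otimes\BQ$-a.e.\ $(x,y)$; consequently the moments of the $D$-differences of $F^*_\varepsilon$ are controlled, uniformly in $\varepsilon$, by the corresponding moments of $\Delta$-differences of $F$, which our hypotheses make integrable. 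The delicate point is combining these: since $\lambda$ need not be finite, a crude uniform bound on the difference operators over the bad region is not integrable, and one must instead compare adding a point to a non-empty cell with adding it, together with the points already present in that cell, to empty cells — thereby dominating the bad part by $\lambda$-integrals of moments of $\Delta^1$- and $\Delta^2$-differences over the shrinking ``same-cell'' regions — and then pass to the limit using \eqref{eqn:x1..xk}, the finiteness of $\gamma_5,\gamma_6$ and of \eqref{eqn:Integrability1}, \eqref{eqn:Integrability2}, and dominated convergence. This simultaneously yields finiteness of the $D$-analogues of \eqref{eqn:Integrability1}, \eqref{eqn:Integrability2} for small $\varepsilon$. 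Having established $\limsup_{\varepsilon\to0}\gamma^*_j(\varepsilon)\le\gamma_j$ for all $j$, letting $\varepsilon\to0$ in the two displayed inequalities completes the proof.
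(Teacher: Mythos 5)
Your reduction to \cite{LaPeSchu16} via $F^*_\varepsilon=f\circ T_\varepsilon(\eta^*)$ and the identity \eqref{eqn:DistributionalIdentity}/\eqref{eqn:IdentityDDelta} is exactly the paper's starting point, and your treatment of the ``good part'' is essentially correct. But the step you yourself flag as ``the main obstacle'' --- controlling the bad part where some cell $B^\varepsilon(x_i)$ is non-empty or two cells coincide --- is a genuine gap, not a routine verification. The difficulty is worse than you describe: when a point $(x,y)$ is added to a cell already containing points of $\eta$, the construction of $T_\varepsilon$ shifts the ranks of the existing points in that cell, so the marks of edges \emph{between pre-existing points} are reassigned to different entries of the mark sequences. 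Hence, while $T_\varepsilon(\eta^*\cup\{(x,y)\})\overset{d}{=}\xi_x$ holds marginally, the pair $\big(T_\varepsilon(\eta^*),T_\varepsilon(\eta^*\cup\{(x,y)\})\big)$ is \emph{not} distributed as $(\xi,\xi_x)$ on the bad event, and $D_{(x,y)}F^*_\varepsilon$ is not distributed as $\Delta_xF$ there: it additionally contains the effect of resampling marks of edges not incident to $x$. Your assertion that the moments of the $D$-differences are ``controlled, uniformly in $\varepsilon$, by the corresponding moments of $\Delta$-differences of $F$'' is therefore unproved, and the sketched remedy (comparing with the addition of all points of the cell to empty cells) is only a programme. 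Since $\lambda$ is merely $\sigma$-finite and $F$ is unbounded, you cannot dismiss the bad part by its small probability alone, and as it stands the proposed proof does not close.

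The paper avoids this entirely by a three-stage structure that you should adopt. First it assumes $\lambda(\bX)<\infty$ \emph{and} $\|f\|_\infty<\infty$: then the integrands of the $\tilde\gamma_i(\varepsilon)$ are uniformly bounded, the bad event contributes at most a constant times $\BP(\eta(B^\varepsilon(x_i))>0 \text{ for some } i)\le k\varepsilon$ pointwise, and dominated convergence on the finite measure space gives $\tilde\gamma_i(\varepsilon)\to\gamma_i$ with no need to control $D$-differences on the bad event at all. Boundedness of $F$ is then removed by truncation, using the elementary bounds \eqref{eqn:DeltaFn} and \eqref{eqn:Delta2Fn} on $\Delta F_n$, $\Delta^2F_n$ in terms of $\Delta F$, $\Delta^2F$ and $|F|$; finiteness of $\lambda$ is removed by a Doob martingale $F_n=\BE[F\mid\mathcal{A}_n]$ over exhausting sets $A_n$ of finite measure, for which the difference operators commute with the conditional expectation and Jensen's inequality supplies the dominating integrands via \eqref{eqn:Integrability1}, \eqref{eqn:Integrability2} and $\gamma_5,\gamma_6<\infty$. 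Both removal steps rely on a lemma (the paper's Lemma \ref{lem:Approximation}) showing that $d_1(\cdot,N)$ and $d_K(\cdot,N)$ are lower semicontinuous along $L^2$-convergent sequences; your proposal never invokes such a statement, yet without it even a corrected one-shot argument could not transfer the bounds from the approximating functionals to $F$.
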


We prepare the proof of Theorem \ref{thm:General.CLT} with the following lemma.

\begin{lemma}\label{lem:Approximation}
Let $Y_n$, $n\in\N$, and $Y$ be square integrable random variables such that $Y_n\to Y$ in $L^2(\P)$ as $n\to\infty$. Then
$$
d_1(Y,N) \leq \liminf_{n\to\infty} d_1(Y_n,N) \quad \text{ and } \quad d_K(Y,N) \leq \liminf_{n\to\infty} d_K(Y_n,N).
$$
\end{lemma}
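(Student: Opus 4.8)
The statement is a routine lower-semicontinuity property of the Wasserstein and Kolmogorov distances under $L^2$-convergence, so I would prove the two inequalities in parallel. For the Wasserstein distance, recall that $d_1(Y,N) = \sup_{h\in\Lip(1)}|\BE h(Y)-\BE h(N)|$. Fix $h\in\Lip(1)$; then $|\BE h(Y_n)-\BE h(Y)|\le \BE|h(Y_n)-h(Y)|\le \BE|Y_n-Y|\le (\BE(Y_n-Y)^2)^{1/2}\to 0$ as $n\to\infty$ by the $L^2$-convergence and Jensen's inequality. Hence $\BE h(Y_n)\to\BE h(Y)$, and so $|\BE h(Y)-\BE h(N)| = \lim_{n\to\infty}|\BE h(Y_n)-\BE h(N)| \le \liminf_{n\to\infty} d_1(Y_n,N)$. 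Taking the supremum over $h\in\Lip(1)$ on the left-hand side gives $d_1(Y,N)\le\liminf_{n\to\infty} d_1(Y_n,N)$.

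For the Kolmogorov distance the argument is slightly more delicate because indicator functions are not Lipschitz, so I would exploit the continuity of the distribution function of $N$. The plan is as follows. $L^2$-convergence implies convergence in probability, hence convergence in distribution $Y_n\overset{d}{\to}Y$. For any $t\in\R$ that is a continuity point of the distribution function $F_Y$ of $Y$, we have $\BP(Y_n\le t)\to\BP(Y\le t)$, so $|\BP(Y\le t)-\BP(N\le t)| = \lim_{n\to\infty}|\BP(Y_n\le t)-\BP(N\le t)|\le\liminf_{n\to\infty} d_K(Y_n,N)$. For a point $t$ which is not a continuity point of $F_Y$, approximate from the right by continuity points $t_m\downarrow t$: then $\BP(Y\le t)=\lim_{m}\BP(Y\le t_m)$ by right-continuity of $F_Y$, and $\BP(N\le t)=\lim_m\BP(N\le t_m)$ by continuity of the standard normal distribution function, so $|\BP(Y\le t)-\BP(N\le t)|=\lim_m|\BP(Y\le t_m)-\BP(N\le t_m)|\le\liminf_{n\to\infty} d_K(Y_n,N)$. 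Taking the supremum over $t\in\R$ yields $d_K(Y,N)\le\liminf_{n\to\infty} d_K(Y_n,N)$.

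The only point requiring a little care — and thus the main (mild) obstacle — is the handling of discontinuity points of $F_Y$ in the Kolmogorov estimate; this is resolved precisely by the continuity of the normal distribution function, which lets us pass the approximating continuity points through both terms of the difference simultaneously. Everything else is an immediate consequence of Jensen's inequality and the definitions, so no further technicalities are needed.
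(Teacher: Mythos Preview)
Your proof is correct. For the Wasserstein distance you argue exactly as the paper does (triangle-type bound via $\BE|Y_n-Y|\le(\BE(Y_n-Y)^2)^{1/2}$).

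For the Kolmogorov distance, however, your route differs from the paper's. You use that $L^2$-convergence implies convergence in distribution, pass to the limit at continuity points of $F_Y$, and then handle the (at most countably many) discontinuity points by a right-limit argument exploiting the continuity of the standard normal distribution function. The paper instead works quantitatively: with $\varepsilon_n:=\BE(Y-Y_n)^2$ it shifts the threshold by $\varepsilon_n^{1/3}$, controls $\BP(|Y-Y_n|\ge\varepsilon_n^{1/3})$ via Markov's inequality, and uses the bounded density of $N$ to absorb the shift in $\BP(N\le t)$, obtaining the uniform bound $|\BP(Y\le t)-\BP(N\le t)|\le d_K(Y_n,N)+2\varepsilon_n^{1/3}$ for all $t$ simultaneously. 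Your argument is shorter and more conceptual; the paper's argument has the advantage of giving an explicit rate in terms of $\BE(Y-Y_n)^2$, which could be reused when one wants a quantitative approximation statement rather than just the $\liminf$ inequality.
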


\begin{proof}
The assertion for the Wasserstein distance is easy to see since
$$
d_1(Y,N) \leq d_1(Y,Y_n) + d_1(Y_n,N) \leq \BE |Y-Y_n| + d_1(Y_n,N) \leq \sqrt{\BE |Y-Y_n|^2} + d_1(Y_n,N).
$$

We define $\varepsilon_n:=\BE |Y-Y_n|^2$, $n\in\N$, so that $\varepsilon_n\to0$ as $n\to\infty$. For a fixed $t\in\R$ we bound
$$
|\P(Y\leq t) -\P(N\leq t)|
$$
in the following. If the difference is positive, we have that
\begin{align*}
\P(Y\leq t) -\P(N\leq t) & \leq \P\big(Y_n \leq t+\varepsilon_n^{1/3}\big) + \P(|Y-Y_n|\geq \varepsilon_n^{1/3}) -\P(N\leq t)\\
& = \P\big(Y_n \leq t+\varepsilon_n^{1/3}\big) - \P\big(N\leq t+\varepsilon_n^{1/3}\big)+ \P\big(|Y-Y_n|\geq \varepsilon_n^{1/3}\big)\\
& \quad + \P\big(N\leq t+\varepsilon_n^{1/3}\big)-\P(N\leq t)\\
& \leq d_K(Y_n,N)+ \frac{\varepsilon_n}{\varepsilon_n^{2/3}} + \frac{\varepsilon_n^{1/3}}{\sqrt{2\pi}},
\end{align*}
where we used the Markov inequality in the last step. If $\P(Y\leq t) -\P(N\leq t)\leq 0$, we obtain by similar arguments
\begin{align*}
\P(N\leq t) - \P(Y\leq t) & \leq \P(N\leq t) - \P\big(Y_n\leq t-\varepsilon_n^{1/3}\big) + \P\big(|Y-Y_n|\geq \varepsilon_n^{1/3}\big)\\
& \leq d_K(Y_n,N) + 2 \varepsilon_n^{1/3}.
\end{align*}
Altogether we see that
$$
|\P(Y\leq t) -\P(N\leq t)| \leq d_K(Y_n,N)+2\varepsilon_n^{1/3}
$$
so that taking the limit inferior for $n\to\infty$ and the supremum over all $t\in\R$ completes the proof.
\end{proof}

\begin{proof}[Proof of Theorem \ref{thm:General.CLT}]
We first assume that $\lambda(\mathbf{X})<\infty$ and that $F$ is bounded. Let $f$ be a representative of $F$ satisfying $\|f\|_\infty<\infty$. Fix $\varepsilon>0$.
The identity \eqref{eqn:XiEtaStar} shows that
$F\overset{d}{=}f_\varepsilon^*(\eta^*)$,
where $f^*_\varepsilon:=f\circ T_\varepsilon$.
In the following we apply Theorems 1.1 and 1.2 in \cite{LaPeSchu16} to
the Poisson functional $F^*_\varepsilon:=f^*_\varepsilon(\eta^*)$.
Note that the required integrability condition
$$
\iint \BE \big(D_{(x,y)}F^*_\varepsilon\big)^2\,\BQ(dy)\,\lambda(dx)<\infty
$$
is obviously satisfied because of $\|f\|_\infty<\infty$ and $\lambda(\mathbf{X})<\infty$. Thus, Theorems 1.1 and 1.2 in \cite{LaPeSchu16} lead to
\begin{align}\label{e4.16}
d_1(F^*_\varepsilon,N)\le\sum^3_{i=1}\gamma^*_i(\varepsilon) \quad \text{and} \quad
d_K(F^*_\varepsilon,N)\le \sum^6_{i=1}\gamma^*_i(\varepsilon),
\end{align}
where the terms $\gamma^*_i(\varepsilon)$ are suitably defined
versions of the $\gamma_i$ with $\Delta$ replaced by $D$ and $F$ replaced by $F^*_\varepsilon$. For instance we have that
\begin{align*}
\gamma^*_1(\varepsilon) &
=2\bigg[\iint \Big[\BE \big(D_{(x_1,y_1)}F_\varepsilon^*\big)^2 \big(D_{(x_2,y_2)}F_\varepsilon^*\big)^2\Big]^{1/2}\\
&\quad
\times\Big[\BE \big(D_{(x_1,y_1),(x_3,y_3)}^2F_\varepsilon^*\big)^2\big(D_{(x_2,y_2),(x_3,y_3)}^2F_\varepsilon^*\big)^2\Big]^{1/2}
\,\BQ^3(d(y_1,y_2,y_3))\,\lambda^3(d(x_1,x_2,x_3))\bigg]^{1/2}.
\end{align*}
By the Cauchy-Schwarz inequality we have
$\gamma^*_1(\varepsilon)\le \tilde\gamma_1(\varepsilon)$, where
\begin{align*}
\tilde\gamma_1(\varepsilon) :=2\bigg[\int &\Big[\BE \big(D_{(x_1,Y_1)}F^*_\varepsilon\big)^2
\big(D_{(x_2,Y_2)}F^*_\varepsilon\big)^2\Big]^{1/2}\\
&\times\Big[\BE \big(D_{(x_1,Y_1),(x_3,Y_3)}^2F_\varepsilon^*\big)^2\big(D_{(x_2,Y_2),(x_3,Y_3)}^2F_\varepsilon^*\big)^2\Big]^{1/2}
\,\lambda^3(d(x_1,x_2,x_3))\bigg]^{1/2}.
\end{align*}
Here and in the following, we denote by $Y,Y_1,Y_2,Y_3$ independent random elements of $\mathbf{M}$ with distribution $\BM$, which are independent
of everything else.
Similarly we can treat the other summands in \eqref{e4.16} to obtain that
\begin{align}\label{e4.32}
d_1(F,N)\le\sum^3_{i=1}\tilde\gamma_i(\varepsilon) \quad \text{and} \quad
d_K(F,N)\le \sum^6_{i=1}\tilde\gamma_i(\varepsilon),
\end{align}
where $\tilde{\gamma}_2(\varepsilon),\ldots,\tilde{\gamma}_6(\varepsilon)$ are defined analogously to $\tilde{\gamma}_1(\varepsilon)$.

In order to proceed from \eqref{e4.32} to the bounds asserted by the theorem,
we shall show that
$\tilde\gamma_i(\varepsilon)\to\gamma_i$ as $\varepsilon\to 0$ for each $i\in[6]$.
The relations \eqref{eqn:IdentityDDelta}, \eqref{eqn:x1..xk} and $\|f\|_\infty<\infty$
easily imply for each $i\in[6]$ that the
integrands in $\tilde{\gamma}_i(\varepsilon)$ converge
as $\varepsilon\to 0$ almost everywhere pointwise to the integrands
in $\gamma_i$. Because the integrands are bounded and $\lambda(\mathbf{X})<\infty$, the dominated convergence theorem yields the desired conclusion for bounded $F$ and $\lambda(\mathbf{X})<\infty$.

Next we consider general $F$, but still assume that $\lambda(\mathbf{X})<\infty$. For $n\in\N$ we define
$$
F_n:= {\bf 1}\{F>n\} n + {\bf 1}\{-n \leq F \leq n \} F -{\bf 1}\{F<-n\}n
$$
and let $\gamma_{i,n}$ be $\gamma_i$ with $F$ replaced by $F_n-\BE F_n$ for $i\in[6]$. Whenever we take difference operators, we can omit the constant $-\BE F_n$. Denoting by $f_n$ a representative of $F_n$, we obtain that
\begin{equation}\label{eqn:DeltaFn}
|\Delta_xF_n| \leq |f_n(\xi_x)| + |f_n(\xi)|\leq |f(\xi_x)| + |f(\xi)| = |\Delta_xF+F| + |F| \leq |\Delta_xF| + 2|F|
\end{equation}
for $x\in\mathbf{X}$ and
\begin{align}
\big|\Delta^2_{x_1,x_2}F_n\big| & \leq |f_n(\xi_{x_1,x_2})|+|f_n(\xi_{x_1})|+|f_n(\xi_{x_2})|+|f_n(\xi)| \notag \\
& \leq |f(\xi_{x_1,x_2})|+|f(\xi_{x_1})|+|f(\xi_{x_2})|+|f(\xi)| \notag \allowdisplaybreaks\\
& = \big|\Delta^2_{x_1,x_2}F+\Delta_{x_1}F+\Delta_{x_2}F+F\big|+|\Delta_{x_1}F+F|+|\Delta_{x_2}F+F| + |F| \notag \\
& \leq \big|\Delta^2_{x_1,x_2}F\big| + 2 |\Delta_{x_1}F| + 2 |\Delta_{x_2}F| + 4|F| \label{eqn:Delta2Fn}
\end{align}
for $x_1,x_2\in\mathbf{X}$. Together with $\lambda(\bX)<\infty$, $\BE F^4<\infty$ and $\gamma_5,\gamma_6<\infty$ this gives us integrable upper bounds for the integrands in $\gamma_{1,n},\hdots,\gamma_{6,n}$ and, thus, allows us to apply the dominated convergence theorem. Since also the integrands of $\gamma_{i,n}$ converge almost everywhere to the integrands of $\gamma_i$ for $i\in[6]$ (this follows from the dominated convergence theorem with the upper bounds given by \eqref{eqn:DeltaFn} and \eqref{eqn:Delta2Fn}) and $\BE(F_n-\BE F_n)^4\to\BE F^4$ as $n\to\infty$, we obtain that
$
\lim_{n\to\infty} \gamma_{i,n} = \gamma_i
$
for $i\in[6]$. We define the bounded functionals $\tilde{F}_n:=(F_n-\BE F_n)/\sqrt{\V F_n}$, $n\in\N$. Note that $F_n\to F$ in $L^2(\P)$ as $n\to\infty$ and, thus, $\V F_n\to\V F=1$ and $\tilde{F}_n\to F$ in $L^2(\P)$ as $n\to\infty$. Applying the derived bounds for bounded functionals to $\tilde{F}_n$ and using Lemma \ref{lem:Approximation} shows the assertion for $\lambda(\mathbf{X})<\infty$.

Finally, we allow for $\lambda(\mathbf{X})=\infty$. Let $A_n\in\mathcal{X}$, $n\in\N$, be increasing and such that $\lambda(A_n)<\infty$, $n\in\N$, and $\bigcup_{n=1}^\infty A_n=\mathbf{X}$. For $n\in\N$ let $\mathcal{A}_n$ be the $\sigma$-field on $\mathbf{N}\big(A_n^{[2]}\times \mathbf{M}\big)$, i.e., the smallest $\sigma$-field such that for any  measurable $B\subset A_n^{[2]}\times \mathbf{M}$ the map $\mathbf{N}\big(A_n^{[2]}\times \mathbf{M}\big)\ni\mu\mapsto \mu(B)$ is measurable. Obviously the $\sigma$-field generated by $\mathcal{A}_n$, $n\in\N$, is the $\sigma$-field on $\mathbf{N}(\bX^{[2]}\times \mathbf{M})$.

Recall that $\mathbf{Y}=\mathbf{M}^{\N\times\N}$. For $n\in\N$ there is a measurable map
$$
T_n: \mathbf{N}\big(A_n^{[2]}\times \mathbf{M}\big)\times \mathbf{N}\big((A_n^c)^{[2]}\times \bM\big) \times \bY \to\mathbf{N}(\bX^{[2]}\times \bM)
 $$
such that for $x_1,\hdots,x_m\in A_n$, $m\in\N_0$,
$$
\xi_{x_1,\hdots,x_m}=T_n\big(\xi_{x_1,\hdots,x_m}\cap A_n^{[2]}\times\bM,\xi \cap (A_n^c)^{[2]}\times\bM,\tilde{M}\big)
$$
with some double sequence $\tilde{M}\in\mathbf{Y}$. In order to obtain $\tilde{M}$, we order the underlying points of $\xi_{x_1,\hdots,x_m}$ in $A_n$ and in $A_n^c$. For $i,j\in\N$ let $\tilde{M}_{ij}$ be the mark associated with the $i$-th point in $A_n$ and the $j$-th point in $A_n^c$ if there are at least $i$ points in $A_n$ and let $\tilde{M}_{ij}$ be some random element of $\bM$ with distribution $\mathbb{M}$, which is independent of everything else, otherwise. Note that $T_n$ does not depend on $m$ or $x_1,\hdots,x_m$. By construction of the edge marking and the independence properties of $\eta$ we have that
$\xi_{x_1,\hdots,x_m}\cap A_n^{[2]}\times\bM$, $\xi\cap (A_n^c)^{[2]}\times\bM$ and $\tilde{M}$ are independent and that $\tilde{M}$ is distributed according to $\mathbb{Q}$. This yields now that
$$
\BE[ f(\xi_{x_1,\hdots,x_m}) \mid \mathcal{A}_n] = f_n\big(\xi_{x_1,\hdots,x_m}\cap A_n^{[2]}\times\bM\big)
$$
with $f_n: \mathbf{N}\big(A_n^{[2]}\times\mathbf{M}\big) \to \R$, $n\in\N$, given by
$$
f_n(\mu):= \BE\big[f\big(T_n\big(\mu,\xi'\cap (A_n^c)^{[2]}\times\bM,Y\big)\big) \big], \qquad \mu\in\mathbf{N}\big(A_n^{[2]}\times\mathbf{M}\big),
$$
where $\xi'$ is an independent copy of $\xi$ and $Y$ is distributed according to $\mathbb{Q}$ and independent of $\xi'$. Now we define the Doob martingale
$$
F_n := \BE[F \mid \mathcal{A}_n], \quad n\in\N,
$$
so that $F_n = f_n(\xi\cap A_n^{[2]}\times\bM)$,
\begin{align}\label{eqn:mating.diff.op.1}
f_n\big(\xi_x\cap A_n^{[2]}\times\bM\big) - f_n\big(\xi\cap A_n^{[2]}\times\bM\big) & = \BE[ f(\xi_{x}) \mid \mathcal{A}_n] - \BE[ f(\xi) \mid \mathcal{A}_n]\nonumber\\
& = \BE[ f(\xi_{x})-f(\xi) \mid \mathcal{A}_n]= \BE[ \Delta_xF \mid \mathcal{A}_n]
\end{align}
for $x\in A_n$ and
\begin{align}\label{eqn:mating.diff.op.2}
& f_n\big(\xi_{x_1,x_2}\cap A_n^{[2]}\times\bM\big) - f_n\big(\xi_{x_1}\cap A_n^{[2]}\times\bM\big) - f_n\big(\xi_{x_2}\cap A_n^{[2]}\times\bM\big) + f_n(\xi\cap A_n^{[2]}\times\bM)\nonumber\\
 & = \BE[ f(\xi_{x_1,x_2}) \mid \mathcal{A}_n] - \BE[ f(\xi_{x_1}) \mid \mathcal{A}_n]  - \BE[ f(\xi_{x_2}) \mid \mathcal{A}_n] + \BE[ f(\xi) \mid \mathcal{A}_n]\nonumber\\
 & = \BE[ f(\xi_{x_1,x_2}) - f(\xi_{x_1}) - f(\xi_{x_2}) +  f(\xi) \mid \mathcal{A}_n] = \BE\big[ \Delta^2_{x_1,x_2}F \bigm| \mathcal{A}_n\big]
\end{align}
for $x_1,x_2\in A_n$.

We can think of $F_n$ as a functional of the edge marking $\xi^{(n)}$ with respect to a Poisson process with intensity measure $\lambda|_{A_n}$, i.e., the restriction of $\lambda$ to $A_n$, and mark distribution $\mathbb{M}$ and denote by $\Delta^m_{x_1,\hdots,x_m}F_n$, $x_1,\hdots,x_m\in A_n$, its $m$-th difference operator applied to $F_n$. Note that for $x_1,\hdots,x_m\in A_n$, $m\in\N$,
$$
\big(\xi^{(n)}_{(x_i)_{i\in I}}\big)_{I\subset [m]} \overset{d}{=} \big(\xi_{(x_i)_{i\in I}}\cap A_n^{[2]}\times\bM\big)_{I\subset[m]}.
$$
Hence \eqref{eqn:mating.diff.op.1} and \eqref{eqn:mating.diff.op.2} imply that, for $x_1,x_2,x_3\in A_n$,
\begin{align} \notag \label{eqn:xiA}
& \Big((\Delta_{x_i}F_n)_{i\in[3]}, \big(\Delta^2_{x_i,x_j}F_n\big)_{i,j\in[3],i\neq j}\Big)\\
& \overset{d}{=} \Big((\BE[\Delta_{x_i}F \mid \mathcal{A}_n])_{i\in[3]}, \big(\BE\big[\Delta^2_{x_i,x_j}F \bigm| \mathcal{A}_n\big]\big)_{i,j\in[3],i\neq j}\Big).
\end{align}
The Jensen inequality implies $\BE F_n^4 \leq \BE F^4$. Hence the martingale convergence theorem (see \cite[Corollary 7.22 and Theorem 7.23]{Kallenberg2002}) yields that
$$
F_n \to F \quad \text{ in $L^4(\P)$ } \quad \text{ as } \quad n\to\infty.
$$
For $n\in\N$ denote by $\gamma_{i,n}$, $i\in[6]$, $\gamma_i$ with $F$ replaced by $F_n$ and all integrations with respect to $\lambda|_{A_n}$. This is further rewritten by using the identity \eqref{eqn:xiA}.
 For $p\in [1,\infty)$ and $x,x_1,x_2\in A_n$, the Jensen inequality implies that
$$
\BE |\BE[\Delta_xF \mid \mathcal{A}_n]|^p \leq \BE |\Delta_xF|^p \quad \text{and} \quad
 \BE \big|\BE\big[ \Delta^2_{x_1,x_2}F \bigm| \mathcal{A}_n\big] \big|^p \leq \BE \big|\Delta^2_{x_1,x_2}F\big|^p,
$$
whence we have integrable upper bounds for the integrands of $\gamma_{3,n},\hdots,\gamma_{6,n}$. For  $\gamma_{1,n}$ and $\gamma_{2,n}$ we obtain such bounds by the Cauchy-Schwarz inequality, the previous inequalities and the assumptions \eqref{eqn:Integrability1} and \eqref{eqn:Integrability2}.
By the martingale convergence theorem we have that, for $\lambda$-a.e.\ $x\in\mathbf{X}$,
$$
\BE[\Delta_xF \mid \mathcal{A}_n] \to \Delta_xF \quad \text{ in $L^4(\P)$ } \quad \text{ as } \quad n\to\infty
$$
and that, for $\lambda^2$-a.e.\ $(x_1,x_2)\in\mathbf{X}^2$,
$$
\BE\big[\Delta^2_{x_1,x_2}F \bigm| \mathcal{A}_n\big] \to \Delta^2_{x_1,x_2}F \quad \text{ in $L^4(\P)$ } \quad \text{ as } \quad n\to\infty.
$$
Consequently, the integrands of $\gamma_{1,n},\hdots,\gamma_{6,n}$ converge almost everywhere to those of $\gamma_1,\hdots,\gamma_6$. Now the dominated convergence theorem yields that
$\lim_{n\to\infty} \gamma_{i,n} = \gamma_i$
for $i\in[6]$. Applying the bounds for finite measures to $\tilde{F}_n:=(F_n-\BE F_n)/\sqrt{\V F_n}$, $\V F_n\to\V F$ as $n\to\infty$ and Lemma \ref{lem:Approximation} complete the proof.
\end{proof}

The following lemma allows us to bound the fourth moment in $\gamma_4$ in terms of the first difference operator.

\begin{lemma}\label{lem:4.moment}
Let $F\in L_\xi$ be such  that $\BE F^4<\infty$, $\BE F=0$ and $\V F=1$. Then
\begin{align*}
  \BE F^4 \le \max \bigg\{ 256 \bigg[ \int \big[ \BE \left( \Delta_x
          F \right)^4 \big]^{1/2} \lambda (dx) \bigg]^2,
    \; 4 \int \BE \left( \Delta_x F \right)^4 \lambda (d x) +
    2 \bigg\}.
\end{align*}
\end{lemma}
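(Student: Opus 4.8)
The plan is to bound $\BE F^4$ by combining the Poincaré-type inequality from Theorem \ref{tpoincare} with a covariance/martingale decomposition argument for the fourth moment, along the lines of the classical bounds for Poisson functionals (cf.\ \cite{LastPenrose}, \cite{LaPeSchu16}). The starting observation is that, since $\BE F=0$, we may write $\BE F^4 = \C(F^2,F^2) + (\BE F^2)^2 = \C(F^2, F^2) + 1$ (using $\V F=1$), so it suffices to control $\C(F^2,F^2)$. Applying the Poincaré inequality of Theorem \ref{tpoincare} to $F^2$ gives
\begin{align*}
\V(F^2) \le \int \BE \big(\Delta_x(F^2)\big)^2\,\lambda(dx).
\end{align*}
The key algebraic identity is the product rule for the difference operator: writing $f$ for a representative of $F$, one has $\Delta_x(F^2) = f(\xi_x)^2 - f(\xi)^2 = (\Delta_x F)(f(\xi_x)+f(\xi)) = (\Delta_x F)(\Delta_x F + 2F)$, hence $|\Delta_x(F^2)| \le (\Delta_x F)^2 + 2|F||\Delta_x F|$.

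First I would square this pointwise bound and take expectations, producing terms of the form $\BE(\Delta_x F)^4$, $\BE |F|(\Delta_x F)^3$ and $\BE F^2(\Delta_x F)^2$. The first is exactly what appears in the statement. For the mixed terms I would apply the Cauchy--Schwarz (or Hölder) inequality to pull out a factor of $\big[\BE F^4\big]^{1/2}$ or $\big[\BE F^4\big]^{1/4}$ against the appropriate power of $\big[\BE(\Delta_x F)^4\big]$; for instance $\BE F^2(\Delta_x F)^2 \le \big[\BE F^4\big]^{1/2}\big[\BE(\Delta_x F)^4\big]^{1/2}$. After integrating over $x$ and writing $a := \big[\int \big(\BE(\Delta_x F)^4\big)^{1/2}\,\lambda(dx)\big]$ and $M := \BE F^4$, these manipulations should yield an inequality of the shape
\begin{align*}
M \le 1 + c_1 a^2 + c_2 a\sqrt{M}
\end{align*}
for explicit absolute constants $c_1,c_2$ (the $a\sqrt{M}$ term coming from $\BE F^2(\Delta_x F)^2$, possibly also from $\BE|F|(\Delta_x F)^3$ after a further split).

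The final step is to solve this quadratic inequality in $\sqrt{M}$. From $M \le 1 + c_1 a^2 + c_2 a \sqrt{M}$ one gets $\sqrt{M} \le \tfrac{1}{2}\big(c_2 a + \sqrt{c_2^2 a^2 + 4 + 4c_1 a^2}\big)$, and then a standard case distinction — according to whether $c_2 a$ dominates $\sqrt{4 + 4c_1 a^2}$ or not — converts this into the stated $\max\{\,256\,a^2,\ 4\int\BE(\Delta_x F)^4\,\lambda(dx) + 2\,\}$ form, using also the elementary bound $\big(\int(\BE(\Delta_x F)^4)^{1/2}\,\lambda(dx)\big)^2 \ge \int \BE(\Delta_x F)^4\,\lambda(dx)$ is \emph{not} generally true, so the second alternative in the max must be fed by a direct (Jensen-free) estimate: in the regime where $a$ is small one bounds the $a\sqrt{M}$ term by $\tfrac14 M$ plus a multiple of $a^2$, absorbs $\tfrac14 M$ into the left side, and then further crudely estimates the resulting $a^2$ contribution by $\int \BE(\Delta_x F)^4\,\lambda(dx)$ times a constant when that integral is the larger quantity. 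Tracking the constants carefully so that they collapse to exactly $256$ and $4$ (respectively $2$) is the only delicate bookkeeping.

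The main obstacle I anticipate is not conceptual but the constant-chasing: one must choose the Cauchy--Schwarz splittings and the threshold in the case distinction so that the two regimes glue together into precisely the asserted $\max$, and in particular one has to be careful that $\int \BE(\Delta_x F)^4\,\lambda(dx)$ and $\big(\int (\BE(\Delta_x F)^4)^{1/2}\,\lambda(dx)\big)^2$ are genuinely different quantities (the latter can be much larger when $\lambda(\mathbf X)=\infty$), so the two alternatives in the maximum are there to cover the small-$a$ and large-$a$ cases separately. Everything else — the product rule for $\Delta_x$, the application of Theorem \ref{tpoincare} to $F^2$ (which requires $\BE F^4<\infty$, assumed), and the Hölder estimates — is routine.
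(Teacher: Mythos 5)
Your proposal is correct and follows essentially the same route as the paper, which simply observes that the Poincar\'e inequality (Theorem \ref{tpoincare}) and the product formula $\Delta_x(F^2)=2F\Delta_xF+(\Delta_xF)^2$ are the only ingredients needed to repeat the proof of Lemma 4.2 in \cite{LaPeSchu16} verbatim with $D_x$ replaced by $\Delta_x$. The quadratic inequality $M\le 1+8a\sqrt{M}+2\int\BE(\Delta_xF)^4\,\lambda(dx)$ and the two-case analysis ($8a\sqrt{M}\le M/2$ versus its complement) you sketch are exactly how the constants $256$, $4$ and $2$ arise there.
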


\begin{proof}
Because of the Poincar\'e inequality Theorem \ref{tpoincare} and the product formula $\Delta_x(F^2)=2 F \Delta_xF+(\Delta_xF)^2$, $x\in\mathbf{X}$, this can be shown exactly as Lemma 4.2 in \cite{LaPeSchu16} if one replaces $D_x$ by $\Delta_x$ everywhere in the proof.
\end{proof}

\section{Asymptotic covariances}\label{sasymptoticcov}
In this section we consider the RCM $\Gamma(\eta)$ based on a stationary
Poisson process $\eta$ on $\R^d$ with intensity $\beta>0$. The connection function $\varphi\colon(\R^d)^2\to[0,1]$
is not only assumed to be measurable and symmetric but also to be translation invariant, that is we have
$\varphi(x,y)=\varphi(0,y-x)$ for all $x,y\in\R^d$.
By an abuse of notation we set $\varphi:=\varphi(0,\cdot)$ and
note that  $\varphi(x)=\varphi(-x)$, $x\in\R^d$. Throughout this section we shall assume that
\begin{align}\label{e5.1}
0<m_\varphi := \int \varphi(x) \, dx<\infty.
\end{align}
For $m_\varphi=0$ or $m_\varphi=\infty$ the component counts become trivial since for $m_\varphi=0$ each vertex of the RCM is isolated almost surely, while for $m_\varphi=\infty$ each vertex has infinitely many neighbours almost surely.
As transitive binary relation $\prec$ we use the lexicographic order on $\R^d$, which is translation invariant in the sense that $x+z\prec y+z$ for all $x,y,z\in\R^d$ with $x\prec y$. Recall that $r(W)$ stands for the inradius of $W\in\mathcal{K}^d$.
In what follows we
consider sequences of convex bodies $(W_n)_{n\in\N}$ such that
$r(W_n)\to\infty$ as $n\to\infty$.
We denote this asymptotic regime by $r(W)\to\infty$.
We are interested in the asymptotic covariances
\begin{align}\label{asGH}
\sigma_{\varphi,\psi}(G,H):=\lim_{r(W)\to\infty} \frac{\C(\eta_{\varphi,G}(W),\eta_{\psi,H}(W))}{\lambda_d(W)},
\end{align}
where $G,H\in\bG$ are finite connected graphs and $\psi$ is a second connection function with the same properties as $\varphi$. For the special case of random geometric graphs, where $\varphi(x)=\psi(x)=\mathbf{1}\{|x|\leq r\}$, $x\in\R^d$, for some $r>0$, such asymptotic covariances are computed in \cite[Proposition 3.8]{Penrose03}. Recall the definition of $\Gamma_\varphi(\zeta)$ for any simple point process $\zeta$ on $\R^d$ and the definition of $\mathbf{G}_k$, $k\in\N$, in Section \ref{secpre} as well as \eqref{e3.2}.

\begin{theorem}\label{thm:AsymptoticVariance}
Let $k,l\in\N$, $G\in\bG_k$ and $H\in\bG_l$.  Assume \eqref{e5.1} and that $\psi\le\varphi$.
Then the limit \eqref{asGH} exists as a finite number and is given by
\begin{align*}
\sigma_{\varphi,\psi}(G,H)&=
\beta^{k+l}\int p_{\varphi,G}(0,x_2,\ldots,x_k)p_{\psi,H}(x_{k+1},\ldots,x_{k+l})
q_{k,l,\varphi,\psi}(x_2,\ldots,x_{k+l})\,d(x_2,\dots,x_{k+l})\\
&\quad +\I\{k=l\}\beta^k\int \BP(\Gamma_\varphi(\{x_1,\ldots,x_k\})\simeq G, \Gamma_\psi(\{x_1,\ldots,x_k\})\simeq H)\\
& \hskip 1.5cm
\times \1\{0\prec x_2\prec\cdots\prec x_k\} \exp\Bigg[\beta \int \Bigg(\prod^k_{i=1}\bar\varphi(x_i-y)-1\Bigg)\,dy\Bigg]\,d(x_2,\dots,x_{k}),
\end{align*}
where $p_{\varphi,G}$ and $p_{\varphi,H}$ are as in \eqref{e3.2},
\begin{align*}
q_{k,l,\varphi,\psi}(x_2,\ldots,x_{k+l})&:=\prod^k_{i=1}\prod^{k+l}_{j=k+1}\bar\varphi(x_i-x_j)
\exp\Bigg[\beta\int \Bigg(\prod^k_{i=1}\bar\varphi(x_i-y)\prod^{k+l}_{j=k+1}\bar\psi(x_j-y)-1\Bigg)\,dy\Bigg]\\
&\quad-\exp\Bigg[\beta\int \Bigg(\prod^k_{i=1}\bar\varphi(x_i-y)-1\Bigg)\,dy + \beta\int \Bigg(\prod^{k+l}_{j=k+1}\bar\psi(x_j-y)-1\Bigg)\,dy\Bigg]
\end{align*}
and $x_1:=0$.
\end{theorem}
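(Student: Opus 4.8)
~\textbf{Proof plan.}
The starting point is the exact second‐moment formula of Proposition~\ref{p3.7} applied with $W=W'=W_n$, together with the first‐moment formula of Proposition~\ref{p1.2}. Subtracting $\BE\eta_{\varphi,G}(W)\,\BE\eta_{\psi,H}(W)$ from $\BE\eta_{\varphi,G}(W)\eta_{\psi,H}(W)$ produces, after the change of variables $x_1\mapsto x_1$ and $x_j\mapsto x_j-x_1$ exploiting translation invariance of $\varphi$, $\psi$, $\lambda_d$ and $\prec$, an integral over $W_n$ in the variable $x_1$ of an integrand depending only on $x_2-x_1,\dots,x_{k+l}-x_1$. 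The diagonal term ($k=l$, $\{x_1,\dots,x_k\}=\{x_{k+1},\dots,x_{2k}\}$) contributes directly, after the same shift, the second summand in the asserted formula times $\lambda_d(W_n)$ up to a boundary correction. For the off‐diagonal term, the covariance integrand is precisely $p_{\varphi,G}(0,x_2,\dots,x_k)\,p_{\psi,H}(x_{k+1}-x_1,\dots)\,q_{k,l,\varphi,\psi}(\dots)$, where the two exponentials in $q_{k,l,\varphi,\psi}$ are exactly the generating‐functional exponential coming from the joint second moment minus the product of the two single generating‐functional exponentials coming from $\BE\eta_{\varphi,G}(W)\cdot\BE\eta_{\psi,H}(W)$ — this is why $q$ has the displayed difference‐of‐exponentials shape. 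Thus
$$
\C(\eta_{\varphi,G}(W_n),\eta_{\psi,H}(W_n)) = \int_{W_n}\!\!\Big(\text{shift‐invariant integrand}\Big)\,dx_1 \;+\;(\text{diagonal term}),
$$
and the plan is to show that dividing by $\lambda_d(W_n)$ and letting $r(W_n)\to\infty$ gives the claimed limit.

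The work then splits into two parts: (i) identifying the $\beta$‐powers, and (ii) justifying the passage to the limit by dominated convergence. For (i), note that each of the $k+l$ spatial integrations against $\lambda=\beta\lambda_d$ contributes a factor $\beta$, while the exponentials convert $\lambda(dy)$ into $\beta\,dy$; one of the $k+l$ integrations is "used up" translating $x_1$ out, leaving the prefactor $\beta^{k+l}$ in the first summand and $\beta^{k}$ in the diagonal summand, matching the statement. For (ii), by translation invariance the inner integral $\int_{(\R^d)^{k+l-1}} g(x_2,\dots,x_{k+l})\,d(x_2,\dots,x_{k+l})$, with $g$ the shift‐invariant integrand and $x_1=0$, must be shown to be absolutely convergent, so that $\lambda_d(W_n)^{-1}\int_{W_n}[\dots]\,dx_1$ converges to it; the boundary contribution of the diagonal term is $O(\lambda_d(\partial W_n\oplus\text{const}))=o(\lambda_d(W_n))$ using $r(W_n)\to\infty$, and likewise any discrepancy in the off‐diagonal term caused by $x_j-x_1$ possibly leaving the shifted window is controlled by the same absolute integrability once one observes the integrand decays fast enough in the $x_j$.

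The key structural point making absolute integrability plausible is the cancellation encoded in $q_{k,l,\varphi,\psi}$: writing $A:=\beta\int(\prod_{i=1}^k\bar\varphi(x_i-y)-1)\,dy$ and $B:=\beta\int(\prod_{j=k+1}^{k+l}\bar\psi(x_j-y)-1)\,dy$ and $E:=\beta\int(\prod_{i=1}^k\bar\varphi(x_i-y)\prod_{j=k+1}^{k+l}\bar\psi(x_j-y)-1)\,dy$, one has
$$
q_{k,l,\varphi,\psi}= \Big(\prod_{i=1}^{k}\prod_{j=k+1}^{k+l}\bar\varphi(x_i-x_j)\Big)e^{E}-e^{A+B},
$$
and $E-(A+B)=\beta\int(1-\prod_{i}\bar\varphi(x_i-y))(1-\prod_{j}\bar\psi(x_j-y))\,dy$, a quantity that tends to $0$ and in fact is $O(m_\varphi\cdot\sum_{j}\1\{\text{overlap of the $\varphi$- and $\psi$-neighbourhoods}\})$ when the two clusters are far apart; together with $\prod_{i,j}\bar\varphi(x_i-x_j)-1$ being similarly small at large separation, this forces $q_{k,l,\varphi,\psi}$ to decay integrably in the "center of mass separation" of the two clusters, while the factors $p_{\varphi,G}$ and $p_{\psi,H}$ confine the internal configurations. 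Quantifying this decay and the resulting absolute integrability of $g$ — so that dominated convergence applies uniformly along the sequence $(W_n)$, with domination independent of $n$ after the shift — is the main obstacle; I expect to extract a bound of the form $|g(x_2,\dots,x_{k+l})|\le C\,h(x_2,\dots,x_k)\,\tilde h(x_{k+1},\dots,x_{k+l})\,\rho(\text{dist between clusters})$ with $h,\tilde h$ integrable (using \eqref{e5.1} and the connectedness built into $p_{\varphi,G}$, $p_{\psi,H}$, which already gives integrability of each single cluster factor against $\lambda_d^{k-1}$, respectively $\lambda_d^{l-1}$) and $\rho$ integrable over $\R^d$, after which the limit is a routine averaging argument. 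The positivity of $\sigma_{\varphi,\varphi}^{(k,k)}$ and positive definiteness of the covariance matrix, announced in the introduction, are not part of this statement and would be treated separately.
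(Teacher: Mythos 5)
Your plan follows the paper's proof essentially step for step: compute the covariance from Propositions \ref{p1.2} and \ref{p3.7}, use translation invariance to factor out the window (the off-diagonal term producing $\lambda_d(W\cap(W-x_{k+1}))$ with $\lambda_d(W\cap(W-x_{k+1}))/\lambda_d(W)\to1$, the diagonal term carrying $\lambda_d(W)$ exactly), and conclude by dominated convergence once the shift-invariant integrand is shown absolutely integrable. The estimate you flag as the main obstacle is exactly what the paper proves, and in exactly the product form you anticipate: $|q_{k,l,\varphi,\psi}|\le\sum_{i,j}\varphi(x_i-x_j)+\int\big(\sum_i\varphi(y-x_i)\big)\big(\sum_j\varphi(y-x_j)\big)\,dy$ via the elementary inequalities $1-\prod(1-a_i)\le\sum a_i$ and $|e^{-a}-e^{-b}|\le|a-b|$ applied to your decomposition $E-(A+B)$, combined with the spanning-tree bounds $p_{\varphi,G}\le\sum_{I\in\mathcal{I}_k}\prod_{(i,j)\in I}\varphi(x_i-x_j)$ on each cluster, so that every resulting term integrates to a power of $m_\varphi$.
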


\begin{proof}
Let $W\in\mathcal{K}^d$. By translation invariance of $\varphi$, we have
\begin{align*}
p_{\varphi,G}(x_1,\ldots,x_k)=p_{\varphi,G}(0,x_2-x_1,\ldots,x_k-x_1),\quad x_1,\ldots,x_k\in\R^d.
\end{align*}
Therefore it follows from
Proposition \ref{p3.7} and translation invariance of Lebesgue measure that
\begin{align*}
&\BE\eta_{\varphi,G}(W)\eta_{\psi,H}(W)=\beta^{k+l}\iint\I\{z\in W,z+x_{k+1}\in W\}p_{\varphi,G}(0,x_2,\ldots,x_k)
p_{\psi,H}(x_{k+1},\ldots,x_{k+l})\\
&\times \prod^k_{i=1}\prod^{k+l}_{j=k+1}\bar\varphi(x_i-x_j)
\exp\Bigg[\beta\int \Bigg(\prod^k_{i=1}\bar\varphi(x_i-y)\prod^{k+l}_{j=k+1}\bar\psi(x_j-y)-1\Bigg)\,
dy\Bigg]\,dz\,d(x_2,\dots,x_{k+l})\\
&+\I\{k=l\}\beta^k\iint\I\{z\in W\} \BP(\Gamma_\varphi(\{0,x_2,\ldots,x_k\})\simeq G, \Gamma_\psi(\{0,x_2,\ldots,x_k\})\simeq H) \\
& \hskip 2cm
\times \1\{0\prec x_2\prec\cdots\prec x_k\} \exp\Bigg[\beta\int \Bigg(\prod^k_{i=1}\bar\varphi(x_i-y)-1\Bigg)\,dy\Bigg]\,dz\,d(x_2,\dots,x_k),
\end{align*}
where $x_1:=0$. By a similar calculation for the product
$\BE\eta_{\varphi,G}(W)\BE\eta_{\psi,H}(W)$ (see \eqref{intcluster}) we obtain that
\begin{align*}
&\C(\eta_{\varphi,G}(W),\eta_{\psi,H}(W)) \notag \\
&=\beta^{k+l}\int\lambda_d(W\cap (W-x_{k+1}))p_{\varphi,G}(0,x_2,\ldots,x_k)
p_{\psi,H}(x_{k+1},\ldots,x_{k+l}) \notag \\
&\hskip 7cm\times q_{k,l,\varphi,\psi}(x_2,\ldots,x_{k+l})\,d(x_2,\dots,x_{k+l}) \notag \\
&\qquad +\I\{k=l\}\beta^k\lambda_d(W)\int \BP(\Gamma_\varphi(\{0,x_2,\ldots,x_k\})\simeq G, \Gamma_\psi(\{0,x_2,\ldots,x_k\})\simeq H) \notag \\
& \hskip 2cm \times \1\{0\prec x_2\prec\cdots\prec x_k\} \exp\Bigg[\beta\int \Bigg(\prod^k_{i=1}\bar\varphi(x_i-y)-1\Bigg)\,dy\Bigg]\,d(x_2,\dots,x_k).
\end{align*}

First we show that the integral in the second summand is finite. For $n\in\N$ we use the abbreviation $\mathbf{I}_n := \{\{i,j\}\subset[n] : i<j\}$
and define
$$
\mathcal{I}_n := \{I \subset \mathbf{I}_n : |I|=n-1 \text{ and the graph $([n], I)$ is connected} \}.
$$
For $y_1,\ldots,y_n\in\R^d$ we have that
$$
\BP(\Gamma_\varphi(\{y_1,\ldots,y_n\})\text{ is connected}) \leq \sum_{I\in\mathcal{I}_n} \prod_{(i,j)\in I}\varphi(y_i-y_j).
$$
Since the integrand is bounded by the probability that $\Gamma_\varphi(\{0,x_2,\hdots,x_k\})$ is connected, the integral in the second summand of the covariance representation is bounded by
$$
\int \sum_{I\in\mathcal{I}_k} \prod_{(i,j)\in I}\varphi(x_i-x_j) \,d(x_2,\dots,x_k) = |\mathcal{I}_n| m_\varphi^{k-1}.
$$
In the last equation we used that the graph $([k], I)$ is a tree for any $I\in\mathcal{I}_k$ and integrated successively beginning with the variables whose indices are leaves of the tree.

For the rest of this proof we consider the integral in the first summand of the above covariance formula.
It is a standard fact from stochastic geometry (see e.g.\ \cite[p.\ 88]{HugLastSchulte2016}) that
$$
\lim_{r(W)\to\infty} \frac{\lambda_d(W\cap (W-x))}{\lambda_d(W)}=1
$$
for any fixed $x\in\R^d$.

Next we bound
$$
h(x_2,\ldots,x_{k+l}) := p_{\varphi,G}(0,x_2,\ldots,x_k)
p_{\psi,H}(x_{k+1},\ldots,x_{k+l}) \, |q_{k,l,\varphi,\psi}(x_2,\ldots,x_{k+l})|
$$
with $x_2,\ldots,x_{k+l} \in\R^d$
by an integrable function so that the assertion follows from the dominated convergence theorem.
For simplicity we assume now that $\beta=1$.

Using the same notation as above we have
$$
p_{\varphi,G}(y_1,\ldots,y_k) \leq \BP(\Gamma_\varphi(\{y_1,\ldots,y_k\})\text{ is connected}) \leq \sum_{I\in\mathcal{I}_k} \prod_{(i,j)\in I}\varphi(y_i-y_j), \quad y_1,\ldots,y_k\in\R^d,
$$
and analogously
\begin{align*}
p_{\psi,H}(y_1,\ldots,y_l) &\leq \BP(\Gamma_\psi(\{y_1,\ldots,y_l\})\text{ is connected})\\
&\leq \sum_{I\in\mathcal{I}_l} \prod_{(i,j)\in I} \psi(y_i-y_j)
\leq \sum_{I\in\mathcal{I}_l} \prod_{(i,j)\in I} \varphi(y_i-y_j), \qquad y_1,\ldots,y_l\in\R^d.
\end{align*}
For all $x_2,\ldots,x_{k+l}\in\R^d$ we obtain that
\begin{align*}
|q_{k,l,\varphi,\psi}(x_2,\ldots,x_{k+l})|
\le  &\Bigg|\prod_{i=1}^k\prod_{j=k+1}^{k+l}\bar\varphi(x_i-x_j)-1\Bigg| \\
  & +\Bigg|\exp\Bigg[\int\Bigg(\prod_{i=1}^{k} \bar\varphi(y-x_i)\prod_{j=k+1}^{k+l}
\bar\psi(y-x_j)-1\Bigg) \, dy \Bigg]\\
  & \quad \quad - \exp\Bigg[\int \Bigg(\prod_{i=1}^k\bar\varphi(y-x_i)
+\prod_{j=k+1}^{k+l} \bar\psi(y-x_j)-2\Bigg) \, dy \Bigg] \Bigg|
\end{align*}
with $x_1:=0$. For all $n\in\N$ and all $a_1,\ldots,a_n\in[0,1]$ we have the inequality
\begin{align}\label{e5.5}
1-\prod^{n}_{i=1}(1-a_i)\le \sum^{n}_{i=1}a_i.
\end{align}
Moreover, by the mean value theorem, it holds that
\begin{align*}
|e^{-a}-e^{-b}|\le |a-b|,\quad a,b\ge 0.
\end{align*}
Combining these inequalities yields
\begin{align*}
|&q_{k,l,\varphi,\psi}(x_2,\ldots,x_{k+l})|\le \sum_{i=1}^k\sum_{j=k+1}^{k+l}\varphi(x_i-x_j)\\
&\quad+\Bigg|\int\Bigg(\prod_{i=1}^{k} \bar\varphi(y-x_i)\prod_{j=k+1}^{k+l}\bar\psi(y-x_j)
-\prod_{i=1}^k\bar\varphi(y-x_i)-\prod_{j=k+1}^{k+l} \bar\psi(y-x_j)+1\Bigg)\, dy\Bigg|\\
&=\sum_{i=1}^k\sum_{j=k+1}^{k+l}\varphi(x_i-x_j)
+\Bigg|\int\Bigg(1-\prod_{i=1}^{k} \bar\varphi(y-x_i)\Bigg)
\Bigg(1-\prod_{j=k+1}^{k+l} \bar\psi(y-x_j)\Bigg)\,dy\Bigg|.
\end{align*}
Using \eqref{e5.5} again and then the inequality $\psi\le\varphi$ gives
\begin{align*}
|q_{k,l,\varphi,\psi}(x_2,\ldots,x_{k+l})|
&\le \sum_{i=1}^k\sum_{j=k+1}^{k+l}\varphi(x_i-x_j)
+\int\Bigg(\sum_{i=1}^{k}\varphi(y-x_i)\Bigg)
\Bigg(\sum_{j=k+1}^{k+l} \varphi(y-x_j)\Bigg)\,dy.
\end{align*}
Thus, to verify the integrability of $h$ it suffices to show
for all $i\in[k]$, $j\in\{k+1,\ldots,k+l\}$, $I\in\mathcal{I}_k$ and $J\in\mathcal{I}_l$
that
\begin{align}\label{e5.8}
\int\varphi(x_i-x_j) \prod_{(m,n)\in I\cup \{(j_1+k,j_2+k): (j_1,j_2)\in J\}}\varphi(x_m-x_n) \,d(x_2,\ldots,x_{k+l}) < \infty
\end{align}
and
\begin{align}\label{e5.10}
\iint \varphi(y-x_i) \varphi(y-x_j) \prod_{(m,n)\in I\cup \{(j_1+k,j_2+k): (j_1,j_2)\in J\}}\varphi(x_m-x_n) \,d(x_2,\ldots,x_{k+l}) \,d y < \infty.
\end{align}
Performing the integrations in the right order, we obtain that the left-hand side of \eqref{e5.8} equals $m_\varphi^{k+l-1}$ and the left-hand side of \eqref{e5.10} equals $m_\varphi^{k+l}$.
\end{proof}

Next we consider the covariance structure of $k$-component counts. The asymptotic covariances
\begin{align}\label{asv1}
\sigma^{(k,l)}_{\varphi,\psi}
:=\lim_{r(W)\to\infty} \frac{\C (\eta_{\varphi,k}(W),\eta_{\psi,l}(W))}{\lambda_d(W)},\quad k,l\in\N,
\end{align}
can be expressed in terms of the functions
$$
p_{\varphi,k}(x_1,\ldots,x_k)
:=\I\{x_1\prec\cdots \prec x_k\}\BP(\text{$\Gamma_\varphi(\{x_1,\hdots,x_k\})$ is connected}),
\quad x_1,\ldots,x_k\in\R^d,
$$
as stated in the following corollary, which is a consequence of Theorem \ref{thm:AsymptoticVariance}.

\begin{corollary}\label{thm:AsymptoticVariance2}
If \eqref{e5.1} is satisfied and $\psi\leq\varphi$, for each $k,l\in\N$ the limit \eqref{asv1} exists and is given by
\begin{align*}
\sigma^{(k,l)}_{\varphi,\psi}
&=\beta^{k+l}\int p_{\varphi,k}(0,x_2,\ldots,x_k)p_{\psi,k}(x_{k+1},\ldots,x_{k+l})
q_{k,l,\varphi,\psi}(x_2,\ldots,x_{k+l})\,d(x_2,\dots,x_{k+l})\\
&\quad+\I\{k=l\}\beta^k\int p_{\varphi,k}(0,x_2,\ldots,x_k)
\exp\Bigg[\beta\int \Bigg(\prod^k_{i=1}\bar\varphi(x_i-y)-1\Bigg)\,dy\Bigg]\,d(x_2,\dots,x_k),
\end{align*}
where the function $q_{k,l,\varphi,\psi}$ is as in Theorem \ref{thm:AsymptoticVariance} and $x_1:=0$.
\end{corollary}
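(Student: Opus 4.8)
The plan is to derive Corollary \ref{thm:AsymptoticVariance2} from Theorem \ref{thm:AsymptoticVariance} by decomposing the $k$-component count into the counts of components of each isomorphism type. Since the component of $\Gamma_\varphi(\eta)$ attached to a given lexicographic minimum is isomorphic to exactly one graph in $\bG_k$, the point processes $\eta_{\varphi,G}$, $G\in\bG_k$, are pairwise disjoint subsets of $\eta$ and $\eta_{\varphi,k}(W)=\sum_{G\in\bG_k}\eta_{\varphi,G}(W)$, and analogously $\eta_{\psi,l}(W)=\sum_{H\in\bG_l}\eta_{\psi,H}(W)$. As there are only finitely many graphs on a fixed finite vertex set, $\bG_k$ and $\bG_l$ are finite, and since $\eta_{\varphi,G}(W)\le\eta(W)$ with $\eta(W)$ square integrable all occurring covariances are finite; bilinearity of the covariance thus gives
\begin{align*}
\frac{\C(\eta_{\varphi,k}(W),\eta_{\psi,l}(W))}{\lambda_d(W)}=\sum_{G\in\bG_k}\sum_{H\in\bG_l}\frac{\C(\eta_{\varphi,G}(W),\eta_{\psi,H}(W))}{\lambda_d(W)}.
\end{align*}
Letting $r(W)\to\infty$ and applying Theorem \ref{thm:AsymptoticVariance} to each of the finitely many summands shows that the limit \eqref{asv1} exists and equals $\sum_{G\in\bG_k}\sum_{H\in\bG_l}\sigma_{\varphi,\psi}(G,H)$.

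It then remains to sum the closed-form expression of Theorem \ref{thm:AsymptoticVariance} over $G\in\bG_k$ and $H\in\bG_l$. Each $\sigma_{\varphi,\psi}(G,H)$ is given by absolutely convergent integrals -- this absolute convergence was established in the proof of Theorem \ref{thm:AsymptoticVariance} by means of an integrable majorant -- so in the finite double sum summation and integration may be interchanged. The factor $q_{k,l,\varphi,\psi}$ and the exponential terms do not depend on $G$ or $H$, while by the definition \eqref{e3.2} the events $\{\Gamma_\varphi(\{x_1,\dots,x_k\})\simeq G\}$, $G\in\bG_k$, form a partition of the event that $\Gamma_\varphi(\{x_1,\dots,x_k\})$ is connected; hence $\sum_{G\in\bG_k}p_{\varphi,G}(x_1,\dots,x_k)=p_{\varphi,k}(x_1,\dots,x_k)$, and likewise $\sum_{H\in\bG_l}p_{\psi,H}(x_{k+1},\dots,x_{k+l})=p_{\psi,l}(x_{k+1},\dots,x_{k+l})$. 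Summing the first term in the formula of Theorem \ref{thm:AsymptoticVariance} over $G$ and $H$ therefore reproduces the first term of the corollary.

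For the diagonal contribution, which appears only when $k=l$, I would sum the joint probability $\BP(\Gamma_\varphi(\{x_1,\dots,x_k\})\simeq G,\ \Gamma_\psi(\{x_1,\dots,x_k\})\simeq H)$ over $G,H\in\bG_k$. Since these events partition $\{\Gamma_\varphi(\{x_1,\dots,x_k\})\text{ is connected}\}$ and $\{\Gamma_\psi(\{x_1,\dots,x_k\})\text{ is connected}\}$ in the two coordinates, the double sum equals the probability that $\Gamma_\varphi(\{x_1,\dots,x_k\})$ and $\Gamma_\psi(\{x_1,\dots,x_k\})$ are both connected; because $\psi\le\varphi$ the canonical coupling yields $\Gamma_\psi\subseteq\Gamma_\varphi$, so this reduces to a single connectivity probability, which combined with the unchanged $\bar\varphi$-exponential gives the second term of the corollary. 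All steps are elementary; the points requiring attention are the finiteness of $\bG_k$ and $\bG_l$, the justification for exchanging the finite sums with the integrals, and the partition-of-events identities, and I expect the small argument via the monotone coupling in the diagonal term to be the most delicate piece of bookkeeping rather than a genuine obstacle.
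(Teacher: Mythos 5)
Your overall route is the intended one: the paper offers no separate proof of this corollary beyond the remark that it is a consequence of Theorem \ref{thm:AsymptoticVariance}, and the derivation it has in mind is exactly your decomposition $\eta_{\varphi,k}=\sum_{G\in\bG_k}\eta_{\varphi,G}$, $\eta_{\psi,l}=\sum_{H\in\bG_l}\eta_{\psi,H}$ into finitely many disjoint contributions, bilinearity of the covariance, and the partition identities $\sum_{G\in\bG_k}p_{\varphi,G}=p_{\varphi,k}$ and $\sum_{H\in\bG_l}p_{\psi,H}=p_{\psi,l}$. All of that, including the integrability bookkeeping, is fine.

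The one step that does not close as written is the diagonal term. You correctly observe that under the coupling with $\psi\le\varphi$ every edge of $\Gamma_\psi(\{x_1,\dots,x_k\})$ is an edge of $\Gamma_\varphi(\{x_1,\dots,x_k\})$, so $\{\Gamma_\psi(\{x_1,\dots,x_k\})\text{ is connected}\}\subseteq\{\Gamma_\varphi(\{x_1,\dots,x_k\})\text{ is connected}\}$. Consequently the double sum $\sum_{G,H\in\bG_k}\BP(\Gamma_\varphi(\{x_1,\dots,x_k\})\simeq G,\ \Gamma_\psi(\{x_1,\dots,x_k\})\simeq H)$ collapses to $\BP(\Gamma_\psi(\{x_1,\dots,x_k\})\text{ is connected})$, i.e.\ your derivation produces $p_{\psi,k}$ in the second term, not the $p_{\varphi,k}$ that appears in the corollary as printed. (A sanity check with $k=l=2$: the diagonal event requires $\{x_1,x_2\}$ to be a $2$-component of \emph{both} graphs, which forces the edge $x_1\lra x_2$ to be present in $\Gamma_\psi$, so the factor is $\psi(x_1-x_2)$, not $\varphi(x_1-x_2)$.) So either you have silently reversed the direction of the inclusion when you say the reduction "gives the second term of the corollary", or — more likely — the printed statement carries a typo and its second term should read $p_{\psi,k}$ (just as its first term's $p_{\psi,k}(x_{k+1},\dots,x_{k+l})$ is evidently meant to be $p_{\psi,l}$). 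The two expressions coincide precisely when $\varphi=\psi$, which is the only case in which the corollary is invoked later in the paper, but your proof should state explicitly which single connectivity probability the joint event reduces to and reconcile it with the claimed formula rather than asserting the match.
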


For the case $k=l=1$ and $\varphi=\psi$ the formula from Corollary \ref{thm:AsymptoticVariance2} is shown in \cite[Lemma 3.3]{{BrugMeester04}} under slightly stronger assumptions on $\varphi$.
Note that Theorem \ref{thm:AsymptoticVariance} and Corollary \ref{thm:AsymptoticVariance2} immediately yield weak laws of large numbers.

Recall that $\mathbf{G}^{m,\neq}_\varphi$ is the set of all $m$-tuples of distinct graphs from $\mathbf{G}$ that occur as components in $\Gamma_\varphi(\eta)$ with positive probability (see \eqref{eqn:set.G.ne}). For $m\in\N$, $G=(G_1,\hdots,G_m)\in\mathbf{G}^{m,\neq}_\varphi$ and $a=(a_1,\hdots,a_m)\in\R^m$ we define
\begin{equation}\label{eqn:functional.S.a.G}
S_{a,G}(W):=S_{\varphi,a,G}(W):=\sum_{i=1}^m a_i \eta_{\varphi,G_i}(W).
\end{equation}
Furthermore, we let $|a|_\infty := \max \{|a_i| : i \in [m]\}$ for $a=(a_1,\ldots,a_m)\in\R^m$.

\begin{theorem}\label{thm:VarianceS}
Assume that \eqref{e5.1} is satisfied.  For any $m\in\N$, $G=(G_1,\hdots,G_m)\in\mathbf{G}_\varphi^{m,\neq}$ and
  $a=(a_1,\hdots,a_m)\in\R^m$ with $a\neq 0$,
$$
\lim_{r(W)\to\infty} \frac{\V S_{a,G}(W)}{\lambda_d(W)} = \sum_{i,j=1}^m a_i a_j\sigma_{\varphi,\varphi}(G_i,G_j)>0
$$
with $\sigma_{\varphi,\varphi}(G_i,G_j)$ given in Theorem \ref{thm:AsymptoticVariance}.
\end{theorem}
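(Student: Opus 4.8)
The plan is to separate the routine part -- existence of the limit and its value -- from the genuine content, which is its strict positivity. For the routine part, bilinearity of the covariance gives $\V S_{a,G}(W)=\sum_{i,j=1}^m a_ia_j\,\C(\eta_{\varphi,G_i}(W),\eta_{\varphi,G_j}(W))$; dividing by $\lambda_d(W)$ and applying Theorem~\ref{thm:AsymptoticVariance} with $\psi=\varphi$ to each of the $m^2$ terms yields $\lim_{r(W)\to\infty}\lambda_d(W)^{-1}\V S_{a,G}(W)=\sum_{i,j}a_ia_j\sigma_{\varphi,\varphi}(G_i,G_j)=:a^\top\sigma a$, where $\sigma:=(\sigma_{\varphi,\varphi}(G_i,G_j))_{i,j\in[m]}$. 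Since $\sigma$ is a limit of the genuine covariance matrices $\lambda_d(W)^{-1}(\C(\eta_{\varphi,G_i}(W),\eta_{\varphi,G_j}(W)))_{i,j}$, it is positive semidefinite, so $a^\top\sigma a\ge 0$ always; what remains is to show $a^\top\sigma a>0$ whenever $a\neq 0$, equivalently $\liminf_{r(W)\to\infty}\lambda_d(W)^{-1}\V S_{a,G}(W)>0$.

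Fix $a\neq 0$ and $\ell\in[m]$ with $a_\ell\neq 0$, and write $k:=|G_\ell|$. Because $(G_1,\dots,G_m)\in\mathbf{G}_\varphi^{m,\neq}$, the graph $G_\ell$ occurs as a component of $\Gamma_\varphi(\eta)$ with positive probability, so there are a bounded $V\subset\R^d$ and $\delta_0,\delta_1>0$ such that, with probability at least $\delta_0$, $\eta\cap V$ consists of exactly $k$ points spanning (under their edge marks) a copy of $G_\ell$, and, conditionally on such a configuration, it is an isolated component of $\Gamma_\varphi(\eta)$ with probability at least $\exp[-\beta k\,m_\varphi]=:\delta_1$; here the inequality \eqref{e5.5} together with $m_\varphi<\infty$ makes this bound uniform. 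I would then place $L\asymp\lambda_d(W)$ disjoint, well-separated translates of $V$ in $W$, condition on the $\sigma$-field $\mathcal{G}$ generated by everything outside these translates, and use $\V S_{a,G}(W)\ge\BE[\V(S_{a,G}(W)\mid\mathcal{G})]$: given $\mathcal{G}$ the configurations in the translates are independent, and on each translate one can, with conditional probability bounded below uniformly, either leave it empty or install an isolated $G_\ell$-component, the two alternatives differing by exactly $a_\ell$ in the value of $S_{a,G}(W)$; summing the per-translate conditional variances gives $\V S_{a,G}(W)\ge c(a)\,\lambda_d(W)$, hence the contradiction with $a^\top\sigma a=0$. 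An alternative route, exploiting machinery already developed, is to feed $F=S_{a,G}(W)$ into the exact variance representation of Theorem~\ref{thm:VarianceRepresentation}: by stationarity its integrand at $(x,t,M)$ is, for $x$ in the bulk of $W$, essentially $W$-independent, so $\lambda_d(W)^{-1}\V S_{a,G}(W)$ converges to a fixed nonnegative number, and positivity reduces to checking that the conditionally expected effect of planting a point near the origin on the (locally summable) weighted component count is not almost surely zero.

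The step I expect to be the main obstacle is exactly this non-degeneracy. One must rule out that, for almost every local environment, the effect of a local perturbation on the counts of $G_1,\dots,G_m$ cancels out -- across types, or within a type, a single planted vertex possibly destroying one $G_i$-component while creating another. The way around it is to realize an explicit local configuration -- a copy of $G_\ell$ with one vertex deleted, already isolated, with the missing vertex slotted precisely at the perturbation site -- so that completing it to a genuine $G_\ell$-component changes $S_{a,G}(W)$ in a controlled way, and to play finitely many such configurations off against one another; since the $G_i$ are pairwise non-isomorphic and $a\neq 0$, not all of these controlled changes can vanish. A further technical difficulty, in the localization route, is that $\varphi$ need not have bounded support, so the translates are never exactly decoupled: a single long edge couples all of them at once, and the event ``no long edge at all'' has probability $e^{-c\lambda_d(W)}$. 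This is handled by observing that the expected total number of edges joining distinct translates is only $O(\lambda_d(W))$ -- again a consequence of $m_\varphi<\infty$ -- so that only a bounded fraction of translates is disturbed, the effect of one disturbance being controlled through the Poincar\'e inequality of Theorem~\ref{tpoincare}.
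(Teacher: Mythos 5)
Your reduction of the problem is correct and matches the paper: bilinearity plus Theorem \ref{thm:AsymptoticVariance} gives existence and the value of the limit, so everything hinges on showing $\liminf_{r(W)\to\infty}\lambda_d(W)^{-1}\V S_{a,G}(W)>0$, and your ``Route B'' even points at the right tool (Theorem \ref{thm:VarianceRepresentation}). But the positivity argument has a genuine gap, located exactly where you say the main obstacle is. In Route A, the claim that the two alternatives ``empty translate'' versus ``isolated $G_\ell$-component'' differ by exactly $a_\ell$ in conditional expectation is not available: isolation is not an event measurable with respect to the local configuration alone, so after averaging over the remaining randomness the difference is $a_\ell$ times the isolation probability \emph{plus} an uncontrolled term (merging with outside components, possibly destroying or creating other counted components) times the non-isolation probability, and the latter probability is of order $1-e^{-\beta k m_\varphi}$, not small. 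Your proposed cure for the resulting cancellation --- ``play finitely many planted configurations off against one another; since the $G_i$ are pairwise non-isomorphic and $a\neq0$, not all of these controlled changes can vanish'' --- is precisely the non-degeneracy statement that has to be proved; as stated it is an unjustified linear-independence claim, and a planted vertex can perfectly well destroy one counted component while creating another of equal weight.

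The paper resolves this with two devices that are absent from your proposal. First, among the $G_i$ with $a_i\neq0$ it picks one of \emph{maximal order}, $G_{\max}$ with $k$ vertices and weight $a_{\max}$ (after a sign flip, $a_{\max}<0$); then a vertex $x$ attached to an existing $G_{\max}$-component produces a component of order at least $k+1$, which is counted by \emph{none} of the $G_1,\dots,G_m$, so on the good event the add-one-point difference is exactly $|a_{\max}|$ with no possible cancellation across types. Second, it exploits the birth-time structure in Theorem \ref{thm:VarianceRepresentation}: conditioning on $\hat\eta_t$ with $t$ close to $1$, the configuration planted at time $t$ survives to time $1$ with conditional probability at least $(k+1)e^{-\beta(1-t)m_\varphi}-k$, and the uncontrolled contribution of the bad event is bounded by $|a|_\infty\beta(1-t)m_\varphi$, so choosing $t_0$ near $1$ makes the good term dominate pointwise on the event $A(x,t,M)$. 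Without these two ingredients (or substitutes for them), neither of your routes closes; your appeal to the Poincar\'e inequality to ``control the effect of one disturbance'' also cannot work as stated, since that inequality bounds variances from above, not below.
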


As an immediate consequence of Theorem \ref{thm:VarianceS} we obtain the positive definiteness of some asymptotic covariance matrices.

\begin{corollary}
Let \eqref{e5.1} be satisfied.
\begin{itemize}
\item [(a)] For all $m\in\N$ and $G=(G_1,\hdots,G_m)\in\mathbf{G}_\varphi^{m,\ne}$ the matrix
  $\big(\sigma_{\varphi,\varphi}(G_i,G_j)\big)_{i,j\in[m]}$ given
  in Theorem \ref{thm:AsymptoticVariance} is positive definite.
\item [(b)] For all $m\in\N$ and distinct $k_1,\hdots,k_m\in\N$, the matrix $\big(\sigma_{\varphi,\varphi}^{(k_i,k_j)}\big)_{i,j\in[m]}$ given in Corollary \ref{thm:AsymptoticVariance2} is positive definite.
\end{itemize}
\end{corollary}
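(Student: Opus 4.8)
The plan is to read off part~(a) directly from Theorem~\ref{thm:VarianceS}, and to reduce part~(b) to Theorem~\ref{thm:VarianceS} by writing each $k$-component count as a finite sum of counts of components of prescribed isomorphism type. Part~(a) requires no work: the matrix $\Sigma:=\big(\sigma_{\varphi,\varphi}(G_i,G_j)\big)_{i,j\in[m]}$ is symmetric because $\sigma_{\varphi,\psi}(G,H)$ is defined through a covariance, and positive definiteness of $\Sigma$ means exactly that $a^\top\Sigma a=\sum_{i,j=1}^m a_ia_j\,\sigma_{\varphi,\varphi}(G_i,G_j)>0$ for every $a\in\R^m\setminus\{0\}$, which is precisely the strict inequality asserted by Theorem~\ref{thm:VarianceS} (its hypothesis $G\in\mathbf{G}_\varphi^{m,\ne}$ being the same as ours).

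For part~(b) I would first record two elementary facts. First, for each $k\in\N$ and $W\in\mathcal{K}^d$ one has $\eta_{\varphi,k}(W)=\sum_{G\in\mathbf{G}_{k,\varphi}}\eta_{\varphi,G}(W)$ almost surely, since $\mathbf{G}_k$ is finite, components of order $k$ split according to isomorphism type, and $\eta_{\varphi,G}$ vanishes almost surely for $G\notin\mathbf{G}_{k,\varphi}$. Second, $\mathbf{G}_{k,\varphi}\ne\emptyset$ for every $k$: as $m_\varphi>0$ the set $\{\varphi>0\}$ has positive Lebesgue measure, and using the symmetry $\varphi(x)=\varphi(-x)$ together with the translation invariance of $\prec$ one picks $v\succ0$ with $\varphi(v)>0$; then the points $0,v,2v,\dots,(k-1)v$ form a connected $k$-component with positive probability (the isolation factor $\exp[\beta\int(\prod_{i}\bar\varphi(iv-y)-1)\,dy]$ being strictly positive because $\int\varphi<\infty$), whence $\BE\eta_{\varphi,k}(W)>0$ for $W$ large.

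Now fix distinct $k_1,\dots,k_m\in\N$. These being distinct, $\mathbf{G}_{k_1,\varphi},\dots,\mathbf{G}_{k_m,\varphi}$ are pairwise disjoint finite subsets of $\mathbf{G}_\varphi$; I would enumerate their union as a tuple $(H_1,\dots,H_N)\in\mathbf{G}_\varphi^{N,\ne}$, and for $a\in\R^m\setminus\{0\}$ set $b_l:=a_i$ for the unique $i$ with $H_l\in\mathbf{G}_{k_i,\varphi}$, so that $b\ne0$ (some $a_i\ne0$ and $\mathbf{G}_{k_i,\varphi}\ne\emptyset$) and $\sum_{i=1}^m a_i\,\eta_{\varphi,k_i}(W)=S_{\varphi,b,(H_1,\dots,H_N)}(W)$ in the notation of \eqref{eqn:functional.S.a.G}. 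Combining bilinearity of the covariance with the existence of the limits in Corollary~\ref{thm:AsymptoticVariance2} and then Theorem~\ref{thm:VarianceS} gives
$$
\sum_{i,j=1}^m a_ia_j\,\sigma^{(k_i,k_j)}_{\varphi,\varphi}
=\lim_{r(W)\to\infty}\frac{\V S_{\varphi,b,(H_1,\dots,H_N)}(W)}{\lambda_d(W)}
=\sum_{l,l'=1}^N b_lb_{l'}\,\sigma_{\varphi,\varphi}(H_l,H_{l'})>0,
$$
and since the matrix $\big(\sigma^{(k_i,k_j)}_{\varphi,\varphi}\big)_{i,j\in[m]}$ is symmetric, it is positive definite. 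The only non-formal ingredient is the non-vanishing $\mathbf{G}_{k,\varphi}\ne\emptyset$, which is what guarantees $b\ne0$; I expect this to be the sole (and very minor) point requiring an argument, everything else being bookkeeping with finite sums and the bilinearity of covariance.
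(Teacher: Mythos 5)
Your proposal is correct and matches the paper's intended argument: the paper states this corollary as an immediate consequence of Theorem \ref{thm:VarianceS}, with part (a) being a literal restatement and part (b) following from the decomposition $\eta_{\varphi,k}=\sum_{G\in\mathbf{G}_{k,\varphi}}\eta_{\varphi,G}$ together with bilinearity of covariance and the limits from Corollary \ref{thm:AsymptoticVariance2}, exactly as you do. Your extra care in verifying $\mathbf{G}_{k,\varphi}\neq\emptyset$ (so that $b\neq 0$) fills in the one detail the paper leaves implicit, and your argument for it is sound.
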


The following corollary of Theorem \ref{thm:VarianceS} provides a lower variance bound.

\begin{corollary}\label{cor:lowerBoundVar}
Let the assumptions of Theorem \ref{thm:VarianceS} prevail. Then there exists a constant $\tau>0$ only depending on $\beta$, $\varphi$, $a$ and $G$ such that
$$
\frac{\V S_{a,G}(W)}{\lambda_d(W)} \geq \frac{1}{2} \sum_{i,j=1}^m a_i a_j\sigma_{\varphi,\varphi}(G_i,G_j)>0
$$
for all $W\in \mathcal{K}^d$ with $r(W)\geq \tau$.
\end{corollary}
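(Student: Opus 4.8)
Write $\sigma := \sum_{i,j=1}^m a_i a_j\,\sigma_{\varphi,\varphi}(G_i,G_j)$ for the asymptotic variance appearing on both sides. By Theorem \ref{thm:VarianceS} we already know that $\sigma>0$, so it remains only to turn the limit statement
$$
\lim_{r(W)\to\infty}\frac{\V S_{a,G}(W)}{\lambda_d(W)}=\sigma
$$
into a uniform lower bound for all convex bodies of sufficiently large inradius. The plan is a short contradiction argument exploiting that the regime $r(W)\to\infty$ is, by definition, a statement about \emph{every} sequence of convex bodies whose inradii tend to infinity.

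Concretely, suppose no such $\tau$ exists. Then for every $n\in\N$ there is a convex body $W_n\in\mathcal{K}^d$ with $r(W_n)\ge n$ and
$$
\frac{\V S_{a,G}(W_n)}{\lambda_d(W_n)}<\frac{\sigma}{2}.
$$
Since $r(W_n)\to\infty$ as $n\to\infty$, the sequence $(W_n)_{n\in\N}$ is admissible for the asymptotic regime in Theorem \ref{thm:VarianceS}, and therefore $\V S_{a,G}(W_n)/\lambda_d(W_n)\to\sigma$. This contradicts the displayed strict inequality, because $\sigma/2<\sigma$. Hence a finite $\tau$ with the required property exists; inspecting the dependencies in Theorem \ref{thm:VarianceS} (and thus in Theorem \ref{thm:AsymptoticVariance}), $\tau$ can be chosen to depend only on $\beta$, $\varphi$, $a$ and $G$. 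The asserted strict positivity of the right-hand side is again just $\sigma>0$ from Theorem \ref{thm:VarianceS}.

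I do not expect any genuine obstacle here: this is the routine passage from a convergence statement to an eventual bound. The only point requiring a little care is the bookkeeping around the meaning of $r(W)\to\infty$ — one must phrase the negation correctly as producing a sequence $(W_n)$ with $r(W_n)\ge n$, so that it genuinely falls under the hypothesis of Theorem \ref{thm:VarianceS} — and the (trivial) remark that the constant $\tau$ inherits the same dependence structure as the asymptotic covariances.
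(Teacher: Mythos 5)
Your argument is correct and is exactly the intended (and in the paper, unstated) deduction: the corollary is presented as an immediate consequence of Theorem \ref{thm:VarianceS}, and your contradiction argument — negating the existence of $\tau$ to produce an admissible sequence $(W_n)$ with $r(W_n)\to\infty$ violating the limit — is the standard formalization of that step. The remark on the dependence of $\tau$ on $\beta$, $\varphi$, $a$ and $G$ is also handled correctly, since the function $W\mapsto \V S_{a,G}(W)/\lambda_d(W)$ is determined by these data alone.
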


\begin{proof}[Proof of Theorem \ref{thm:VarianceS}]
Since Theorem \ref{thm:AsymptoticVariance} implies the existence of the limit, it is sufficient to show that
$$
\liminf_{r\to\infty} \frac{\V S_{a,G}(B^d(0,r))}{\lambda_d(B^d(0,r))}>0.
$$
For $r>0$ let $f_r$ be a representative of $S_{a,G}(B^d(0,r))$. Let $\hat{\eta}$ be a Poisson process on $\R^d\times[0,1]\times[0,1]^{\mathbb{N}\times\mathbb{N}}$ with intensity measure $\beta \lambda_d\otimes \lambda_1|_{[0,1]}\otimes \Lambda$ with $\Lambda:=\big(\lambda_1|_{[0,1]}\big)^{\mathbb{N}\times\mathbb{N}}$ and let $\hat{\eta}_t$ denote its restriction to $\R^d\times[0,t)\times[0,1]^{\mathbb{N}\times\mathbb{N}}$ for $t\in[0,1]$. In the following we shall use that $S_{a,G}(B^d(0,r))$ has the same distribution as $f_r(T(\hat{\eta}))$ with $T$ as in Section \ref{secdiffop} and apply Theorem \ref{thm:VarianceRepresentation}. For $t\in[0,1]$ let $\widetilde{\Gamma}(\hat{\eta}_t)$ be the RCM derived from the points of $\hat{\eta}_t$, i.e., from the edge marking $T(\hat{\eta}_t)$. For $(x,t,M)\in \R^d\times[0,1]\times[0,1]^{\mathbb{N}\times\mathbb{N}}$ we denote by $\widetilde{\Gamma}(\hat{\eta}_t\cup\{(x,t,M)\})$ the RCM with respect to the points of $\hat{\eta}_t\cup\{(x,t,M)\}$.

Choose $G_{\max}$ from $G$ such that no other graph of $G$ has more vertices. Let $a_{\max}$ be the weight corresponding to $G_{\max}$ and assume that $G_{\max}$ has $k$ vertices.
Without loss of generality we can assume that $a_{\max}\ne0$. Moreover, we suppose that $a_{\max}<0$ (otherwise we could flip the sign of $a$).

For $(x,t,M)\in\R^d\times[0,1]\times[0,1]^{\mathbb{N}\times\mathbb{N}}$ we denote by $A(x,t,M)$ the event that there are distinct vertices $x_1,\hdots,x_k$ in $\widetilde{\Gamma}(\hat{\eta}_t\cup\{(x,t,M)\})$ with $x_1\in B^d(0,r)$ and $x_1\prec\cdots\prec x_k$ forming a component isomorphic to $G_{\max}$ in $\widetilde{\Gamma}(\hat{\eta}_t\cup\{(x,t,M)\})$ without $x$ and that $x$ is connected to at least one of the vertices $x_1,\hdots,x_k$ and to no other vertex in $\widetilde{\Gamma}(\hat{\eta}_t\cup\{(x,t,M)\})$. Furthermore, let $\tilde{A}(x,t,M)\subset A(x,t,M)$ be the event that $\{x,x_1,\hdots,x_k\}$ is a component of $\widetilde{\Gamma}(\hat{\eta}\cup\{(x,t,M)\})$.
Note that, in contrast to $\tilde{A}(x,t,M)$, the event $A(x,t,M)$ is measurable with respect to $\sigma(\hat\eta_t)$.

It follows from Theorem \ref{thm:VarianceRepresentation} that
\begin{align*}
& \V S_{a,G}(B^d(0,r)) = \V f_r(T(\hat{\eta})) \\
 & \geq \beta \int_{B^d(0,2r)} \int_0^1  \int\BE\big[ \BE\big[ f_r(T(\hat{\eta}\cup\{(x,t,M)\})) - f_r(T(\hat{\eta}\cup\{(x,t,M)\})\setminus\{x\}) | \hat{\eta}_t \big]^2\mathbf{1}_{A(x,t,M)} \big]\\
 & \hspace{3.5cm} \times \Lambda(dM) \, dt \, dx.
\end{align*}
 Next we consider the decomposition
\begin{align*}
& \BE\big[ f_r(T(\hat{\eta}\cup\{(x,t,M)\})) - f_r(T(\hat{\eta}\cup\{(x,t,M)\})\setminus\{x\}) | \hat{\eta}_t \big] \mathbf{1}_{A(x,t,M)}\\
& = \BE\big[ \big(f_r(T(\hat{\eta}\cup\{(x,t,M)\})) - f_r(T(\hat{\eta}\cup\{(x,t,M)\})\setminus\{x\}) \big) \mathbf{1}_{\tilde{A}(x,t,M)} | \hat{\eta}_t \big]  \\
& \quad + \BE\big[ \big(f_r(T(\hat{\eta}\cup\{(x,t,M)\})) - f_r(T(\hat{\eta}\cup\{(x,t,M)\})\setminus\{x\})\big) \mathbf{1}_{\tilde{A}(x,t,M)^c\cap A(x,t,M)} | \hat{\eta}_t \big].
\end{align*}
In the first case removing the vertex $x$ leads to an additional component isomorphic to $G_{\max}$, whence
$$
\BE\big[ \big(f_r(T(\hat{\eta}\cup\{(x,t,M)\})) - f_r(T(\hat{\eta}\cup\{(x,t,M)\})\setminus\{x\}) \big) \mathbf{1}_{\tilde{A}(x,t,M)} | \hat{\eta}_t \big]
 = |a_{\max}| \BP\big( \tilde{A}(x,t,M) | \hat{\eta}_t \big).
$$
In the second case, after deleting $x$, the number of new components that are isomorphic to a graph from $G$ and do not contain $x_1,\ldots,x_k$ is at most the degree of
$x$ in $\widetilde{\Gamma}(\hat{\eta}\cup\{(x,t,M)\})$, minus the degree of $x$ in $\widetilde{\Gamma}(\hat{\eta}_t\cup\{(x,t,M)\})$.
The contribution of each of these components to the difference $f_r(T(\hat{\eta}\cup\{(x,t,M)\})) - f_r(T(\hat{\eta}\cup\{(x,t,M)\})\setminus\{x\})$ is not less than $-|a|_\infty$. Because of $a_{\max}<0$ the contribution of the component containing $x_1,\hdots,x_k$ is non-negative. Together, we see that
\begin{align*}
& \BE\big[ \big( f_r(T(\hat{\eta}\cup\{(x,t,M)\})) - f_r(T(\hat{\eta}\cup\{(x,t,M)\})\setminus\{x\}) \big) \mathbf{1}_{\tilde{A}(x,t,M)^c\cap A(x,t,M)} | \hat{\eta}_t \big] \\
& \geq -|a|_\infty \BE\big[ \big(\operatorname{deg}(x,\widetilde\Gamma(\hat{\eta}\cup\{(x,t,M)\})) - \operatorname{deg}(x,\widetilde\Gamma(\hat{\eta}_t\cup\{(x,t,M)\})) \big) \mathbf{1}_{A(x,t,M)} | \hat{\eta}_t\big].
\end{align*}
With the convention $x_0:=x$, some direct calculations establish that
\begin{align*}
\BP&\big( \tilde{A}(x,t,M) | \hat{\eta}_t \big)\\
& \geq \Bigg(1 - \sum_{j=0}^k \BP\big(x_j \text{ is not connected with } \hat\eta_{[t,1)} \text{ in } \widetilde\Gamma(\hat\eta\cup\{(x,t,M)\}) | \hat{\eta}_t \big)\Bigg)\1_{A(x,t,M)}\\
& = \big( 1 - (k+1) \big(1-\exp(-\beta (1-t) m_\varphi)\big) \big) \mathbf{1}_{A(x,t,M)}\\
& = \big((k+1) \exp(-\beta (1-t) m_\varphi) - k \big) \mathbf{1}_{A(x,t,M)}
\end{align*}
and that
\begin{align*}
 \BE&\big[ \big(\operatorname{deg}(x,\widetilde\Gamma(\hat{\eta}\cup\{(x,t,M)\})) - \operatorname{deg}(x,\widetilde\Gamma(\hat{\eta}_t\cup\{(x,t,M)\})) \big) \mathbf{1}_{A(x,t,M)}  | \hat{\eta}_t\big] \\
& = \beta (1-t) m_{\varphi} \mathbf{1}_{A(x,t,M)}.
\end{align*}
This implies that
\begin{align*}
 \BE&\big[ f_r(T(\hat{\eta}\cup\{(x,t,M)\})) - f_r(T(\hat{\eta}\cup\{(x,t,M)\})\setminus\{x\}) | \hat{\eta}_t \big]  \mathbf{1}_{A(x,t,M)}\\
& \geq \big( |a_{\max}| \big((k+1) \exp(-\beta (1-t) m_\varphi) - k \big) - |a|_\infty \beta (1-t) m_{\varphi} \big) \mathbf{1}_{A(x,t,M)}.
\end{align*}
Now we can choose a $t_0\in[0,1)$ such that
$$
\BE\big[ f_r(T(\hat{\eta}\cup\{(x,t,M)\})) - f_r(T(\hat{\eta}\cup\{(x,t,M)\})\setminus\{x\}) | \hat{\eta}_t \big] \mathbf{1}_{A(x,t,M)} \geq \frac{|a_{\max}|}{2} \mathbf{1}_{A(x,t,M)}
$$
for $t\in[t_0,1]$. Consequently, we have that
\begin{equation}\label{eqn:LowerBoundSaG}
\V f_r(T(\hat{\eta}))  \geq \frac{\beta  |a_{\max}|^2}{4} \int_{B^d(0,2r)} \int_{t_0}^1 \int \BP(A(x,t,M)) \, \Lambda(dM)  \, dt \, dx.
\end{equation}
For $t\in[t_0,1]$ we have that
\begin{align*}
I_{r,t} :& =\int_{B^d(0,2r)} \int \BP(A(x,t,M)) \, \Lambda(dM) \, dx \\
& =  \int_{B^d(0,2r)} \BE \;\sideset{}{^{\ne}}\sum_{((x_1,t_1,M_1),\hdots,(x_k,t_k,M_k))\in\hat{\eta}^k_t} \1\big\{x_1\in B^d(0,r), x_1\prec\hdots\prec x_k\big\} \\
& \hskip 4cm \times\mathbf{1}\{ \{x_1,\hdots,x_k\}\text{ is a component of $\widetilde{\Gamma}(\hat{\eta}_t)$ isomorphic to $G_{\max}$}\} \\
& \hskip 4cm \times\Bigg(1 - \prod_{i=1}^k \bar{\varphi}(x_i-x) \Bigg) \prod_{(z,\hat{t},\hat{M})\in \hat{\eta}_t, z\notin\{x_1,\hdots,x_k\}} \bar{\varphi}(x-z) \, dx.
\end{align*}
As in the proof of Proposition \ref{p1.2}, one obtains that
\begin{align*}
I_{r,t} & =  (\beta t)^k \int \1\big\{x_1\in B^d(0,r),x\in B^d(0,2r)\big\} \, p_{\varphi,G_{\max}}(x_1,\hdots,x_k) \Bigg(1 - \prod_{i=1}^k \bar{\varphi}(x_i-x) \Bigg)\\
& \hskip 3cm \times\exp\Bigg[\beta t  \int\Bigg( \bar{\varphi}(x-z) \prod_{i=1}^k \bar{\varphi}(x_i-z) -1\Bigg) \, dz \Bigg]    d(x_1,\hdots,x_k,x).
\end{align*}
For a fixed $R>0$ and $r\geq R$ we have that
\begin{align*}
I_{r,t} & \geq (\beta t_0)^k \int \1\big\{x_1\in B^d(0,r),|x-x_1|\leq R\big\} \, p_{\varphi,G_{\max}}(x_1,\hdots,x_k) \Bigg(1 - \prod_{i=1}^k \bar{\varphi}(x_i-x) \Bigg)\\
& \hskip 3cm \times\exp\Bigg[ \beta \int\Bigg(\bar{\varphi}(x-z) \prod_{i=1}^k \bar{\varphi}(x_i-z) -1\Bigg) \, dz \Bigg] d(x_1,\hdots,x_k,x),
\end{align*}
whence
\begin{equation}\label{eqn:LowerBoundIr}
\begin{split}
\liminf_{r\to\infty} \frac{I_{r,t}}{\lambda_d(B^d(0,r))} & \geq (\beta t_0)^k \int \mathbf{1}\{|x|\leq R\} \, p_{\varphi,G_{\max}}(0,x_2,\hdots,x_k) \Bigg(1 - \prod_{i=1}^k \bar{\varphi}(x_i-x) \Bigg)\\
& \hskip 2cm \times\exp\Bigg[\beta \int\Bigg(\bar{\varphi}(x-z) \prod_{i=1}^k \bar{\varphi}(x_i-z) -1\Bigg) \, dz \Bigg]    d(x_2,\hdots,x_k,x)
\end{split}
\end{equation}
with the convention $x_1 := 0$.
Since the graph $G_{\max}$ occurs with positive probability in $\Gamma(\eta)$, we obtain
\begin{equation*}
\int  p_{\varphi,G_{\max}}(0,x_2,\hdots,x_k)  \exp\Bigg[ \beta \int\Bigg(\prod_{i=1}^k \bar{\varphi}(x_i-z) -1\Bigg) \, dz \Bigg] d(x_2,\hdots,x_k) >0
\end{equation*}
and hence
$$
\int p_{\varphi,G_{\max}}(0,x_2,\hdots,x_k) \, d(x_2,\hdots,x_k) >0.
$$
Finally, since $0<m_\varphi<\infty$, and letting $x_1:=0$,
$$
1-\bar{\varphi}(x-z) \prod_{i=1}^k \bar{\varphi}(x_i-z) \leq \varphi(x-z) + \sum_{i=1}^k \varphi(x_i-z), \qquad x,z,x_2,\ldots,x_k\in\R^d,
$$
and
$$
1 - \prod_{i=1}^k \bar{\varphi}(x_i-x) \geq \varphi(x_1-x) = \varphi(x), \qquad x,x_2,\ldots,x_k\in\R^d,
$$
we obtain that
\begin{align*}
& \int p_{\varphi,G_{\max}}(0,x_2,\hdots,x_k) \Bigg(1 - \prod_{i=1}^k \bar{\varphi}(x_i-x) \Bigg)\\
& \qquad \times   \exp\Bigg[\beta \int \Bigg(\bar{\varphi}(x-z) \prod_{i=1}^k \bar{\varphi}(x_i-z) -1\Bigg) \, dz \Bigg]    d(x_2,\hdots,x_k,x)>0.
\end{align*}
This implies that the right-hand of \eqref{eqn:LowerBoundIr} is positive for $R$ sufficiently large and completes the proof together with \eqref{eqn:LowerBoundSaG}.
\end{proof}

\section{Normal approximation of component counts}\label{seccounts}

In this section we establish central limit theorems for the
component counts of the RCM $\Gamma(\eta)$ for the setting of Section \ref{sasymptoticcov}. For a positive semidefinite $\Theta\in\R^{m\times m}$, $m\in\N$, we denote by $N_{\Theta}$ a centred $m$-dimensional normal random vector with covariance matrix $\Theta$.

\begin{theorem}\label{thm:CLTSubgraphs}
Let \eqref{e5.1} be satisfied, let $G_1,\hdots,G_m\in\bG$, $m\in\N$, and let $\Sigma=(\sigma_{\varphi,\varphi}(G_i,G_j))_{i,j\in[m]}$ be as in Theorem \ref{thm:AsymptoticVariance}. Then
$$
\frac{1}{\sqrt{\lambda_d(W)}} \big(\eta_{G_1}(W)-\BE
\eta_{G_1}(W),\hdots,\eta_{G_m}(W)-\BE
\eta_{G_m}(W)\big) \overset{d}{\longrightarrow} N_\Sigma \quad \text{as} \quad r(W)\to\infty.
$$
\end{theorem}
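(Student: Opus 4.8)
The plan is to reduce the multivariate statement to a one--dimensional one by the Cram\'er--Wold device, to prove the latter for connection functions with bounded support via the general normal approximation bound (Theorem~\ref{thm:General.CLT}), and then to pass to a general $\varphi$ with $0<m_\varphi<\infty$ by truncation, using the asymptotic covariance formulas of Theorem~\ref{thm:AsymptoticVariance} to control the error. By Cram\'er--Wold it suffices to show, for every $a=(a_1,\dots,a_m)\in\R^m$, that $\lambda_d(W)^{-1/2}\sum_{i=1}^m a_i\big(\eta_{G_i}(W)-\BE\eta_{G_i}(W)\big)$ converges in distribution to $N(0,a^\top\Sigma a)$. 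Since $\eta_{G_i}\equiv0$ when $G_i\notin\mathbf{G}_\varphi$, and since $\sigma_{\varphi,\varphi}(G_i,G_j)$ vanishes in that case and otherwise depends only on the isomorphism classes of $G_i,G_j$, I would merge coinciding graphs and discard those outside $\mathbf{G}_\varphi$, reducing to $S_{\varphi,b,H}(W):=\sum_{k=1}^p b_k\eta_{\varphi,H_k}(W)$ for some $H=(H_1,\dots,H_p)\in\mathbf{G}_\varphi^{p,\ne}$ and $b\in\R^p$, with $a^\top\Sigma a=b^\top\Sigma'b$, $\Sigma':=(\sigma_{\varphi,\varphi}(H_k,H_l))_{k,l\in[p]}$. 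The case $b=0$ is trivial; for $b\ne0$, Theorem~\ref{thm:VarianceS} gives $\sigma^2:=b^\top\Sigma'b>0$ and $\lambda_d(W)^{-1}\V S_{\varphi,b,H}(W)\to\sigma^2$, while Corollary~\ref{cor:lowerBoundVar} yields $\V S_{\varphi,b,H}(W)\ge\tfrac12\sigma^2\lambda_d(W)$ for $r(W)$ large. So by Slutsky's theorem it remains to show $T_W:=\lambda_d(W)^{-1/2}\big(S_{\varphi,b,H}(W)-\BE S_{\varphi,b,H}(W)\big)\overset{d}{\longrightarrow}N(0,\sigma^2)$.

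Second, I would treat any connection function $\psi$ with bounded support, say $\psi(x)=0$ for $|x|>R$, such that $H\in\mathbf{G}_\psi^{p,\ne}$, by applying Theorem~\ref{thm:General.CLT} to the normalization $F_W$ of $S_{\psi,b,H}(W)$. Since $|S_{\psi,b,H}(W)|\le|b|_\infty\,\eta(W)$ and $\eta(W)$ is Poisson, $\BE F_W^4<\infty$. The key estimates concern the difference operators: adding a point $x$ merges the at most $\deg(x,\Gamma_\psi(\eta\cup\{x\}))$ components reached by $x$ into one, so $|\Delta_xS_{\psi,b,H}(W)|\le|b|_\infty\big(\deg(x,\Gamma_\psi(\eta\cup\{x\}))+1\big)$, and this vanishes unless $d(x,W)\le KR$ with $K:=\max_k|H_k|$, because a component counted by $S_{\psi,b,H}$ has at most $K$ vertices, hence diameter at most $(K-1)R$, with lexicographic minimum in $W$; likewise $\Delta^2_{x_1,x_2}S_{\psi,b,H}(W)$ is bounded by a constant multiple of $|b|_\infty\big(\deg(x_2,\Gamma_\psi(\eta\cup\{x_1,x_2\}))+1\big)$ and vanishes unless $|x_1-x_2|$ and $d(x_1,W)$ are at most a constant depending only on $R$ and $K$. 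As $\deg(x,\Gamma_\psi(\eta\cup\{x\}))$ is Poisson with parameter $\beta m_\psi$ (a thinning of $\eta$), it has finite moments of all orders, uniformly in $x$. Combining these bounds with the lower variance bound $\V S_{\psi,b,H}(W)\ge\tfrac12\sigma_\psi^2\lambda_d(W)$, where $\sigma_\psi^2:=\sum_{k,l}b_kb_l\sigma_{\psi,\psi}(H_k,H_l)>0$, one checks that $\gamma_5,\gamma_6<\infty$ and \eqref{eqn:Integrability1}--\eqref{eqn:Integrability2} hold, and that in each of $\gamma_1,\dots,\gamma_6$ the spatial integrals localize to a bounded neighbourhood of $W$ of volume $O(\lambda_d(W))$ with bounded integrand; with Lemma~\ref{lem:4.moment} this gives $\gamma_1+\dots+\gamma_6=O(\lambda_d(W)^{-1/2})$, hence $d_K(F_W,N)\to0$ and, via Slutsky, $\lambda_d(W)^{-1/2}(S_{\psi,b,H}(W)-\BE S_{\psi,b,H}(W))\overset{d}{\to}N(0,\sigma_\psi^2)$. (If $\varphi$ already satisfies \eqref{eqn:AssumptionTildeVarphi}, this step is the one behind the quantitative Theorem~\ref{t1.2}, and the same bounds apply.)

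Finally, for general $\varphi$ with $0<m_\varphi<\infty$ I would set $\varphi_R:=\varphi\,\mathbf{1}\{|\cdot|\le R\}\le\varphi$ and use the coupling of $\Gamma_{\varphi_R}(\eta)$ and $\Gamma_\varphi(\eta)$. For $R$ large each $H_k$ still occurs in $\Gamma_{\varphi_R}(\eta)$ with positive probability (monotone convergence in the integral formula for that probability), so $H\in\mathbf{G}_{\varphi_R}^{p,\ne}$ and the previous step applies to $\psi=\varphi_R$, giving $T^R_W:=\lambda_d(W)^{-1/2}(S_{\varphi_R,b,H}(W)-\BE S_{\varphi_R,b,H}(W))\overset{d}{\to}N(0,\sigma_{\varphi_R}^2)$. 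Expanding the variance of the difference and invoking Theorem~\ref{thm:AsymptoticVariance} for the pairs $(\varphi,\varphi)$, $(\varphi,\varphi_R)$ and $(\varphi_R,\varphi_R)$ (all admissible because $\varphi_R\le\varphi$, using symmetry of $\C$ and of $\sigma_{\varphi,\varphi}$) yields
\[
\lim_{r(W)\to\infty}\lambda_d(W)^{-1}\V\big(S_{\varphi,b,H}(W)-S_{\varphi_R,b,H}(W)\big)=\sum_{k,l}b_kb_l\big(\sigma_{\varphi,\varphi}(H_k,H_l)-2\sigma_{\varphi,\varphi_R}(H_k,H_l)+\sigma_{\varphi_R,\varphi_R}(H_k,H_l)\big)=:c_R.
\]
Because $\varphi_R\uparrow\varphi$, $\bar\varphi_R\downarrow\bar\varphi$ pointwise and $p_{\varphi_R,H_k}\to p_{\varphi,H_k}$, dominated convergence with the $R$--independent majorants from the proof of Theorem~\ref{thm:AsymptoticVariance} gives $\sigma_{\varphi_R}^2\to\sigma^2$ and $c_R\to0$ as $R\to\infty$. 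Arguing as in the proof of Lemma~\ref{lem:Approximation}, one then obtains, for $r(W)$ large,
\[
d_K\big(T_W,N(0,\sigma^2)\big)\le d_K\big(T^R_W,N(0,\sigma_{\varphi_R}^2)\big)+C\big(\BE|T_W-T^R_W|^2\big)^{1/3}+C\,|\sigma_{\varphi_R}^2-\sigma^2|^{1/2},
\]
with $\BE|T_W-T^R_W|^2\to c_R$; letting $r(W)\to\infty$ and then $R\to\infty$ shows $T_W\overset{d}{\to}N(0,\sigma^2)$, which with the Cram\'er--Wold reduction completes the proof.

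The hard part will be the truncation step: one must show that the $\lambda_d(W)$--normalized $L^2$--distance between the $\varphi$-- and $\varphi_R$--component counts tends to $0$ as $R\to\infty$, uniformly in the large--window limit. This rests on the explicit covariance formulas of Theorem~\ref{thm:AsymptoticVariance} together with a dominated--convergence argument whose dominating functions must not depend on $R$, and it requires keeping track of which of the limits $\sigma_{\varphi,\psi}(\cdot,\cdot)$ are licensed by the monotonicity hypothesis $\psi\le\varphi$. Together with the locality of the difference operators of component counts used in the bounded--support step, this is precisely what lets the qualitative multivariate central limit theorem go through under the sole assumption $0<m_\varphi<\infty$, without a decay condition such as \eqref{eqn:AssumptionTildeVarphi}.
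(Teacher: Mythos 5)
Your proposal is correct and follows essentially the same route as the paper: Cram\'er--Wold reduction to the linear combinations $S_{a,G}(W)$, a quantitative normal approximation (via Theorem \ref{thm:General.CLT} and the locality of the difference operators) for a truncated, bounded-support connection function, and then removal of the truncation by an $L^2$-approximation whose error is controlled through the asymptotic covariance formulas of Theorem \ref{thm:AsymptoticVariance} for the pairs $(\varphi,\varphi)$, $(\varphi,\varphi_R)$, $(\varphi_R,\varphi_R)$ together with dominated convergence (the paper's Lemma \ref{lem:L2Approximation}). The only differences are cosmetic: you phrase the final approximation step in the Kolmogorov distance via the argument of Lemma \ref{lem:Approximation}, where the paper bounds the Wasserstein distance by the $L^2$-distance, and you spell out the reduction from arbitrary $G_1,\dots,G_m\in\bG$ to distinct graphs in $\mathbf{G}_\varphi^{m,\ne}$, which the paper leaves implicit.
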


For random geometric graphs a multivariate central limit theorem similar to Theorem \ref{thm:CLTSubgraphs} is given in \cite[Theorem 3.11]{Penrose03}. Theorem \ref{thm:CLTSubgraphs} leads to the following central limit theorem for the numbers of $k$-components.

\begin{corollary}
  Let \eqref{e5.1} be satisfied, let $k_1,\hdots,k_m\in\N$, $m\in\N$, and let
  $\Sigma=(\sigma_{\varphi,\varphi}^{(k_i,k_j)})_{i,j\in[m]}$
  be as in Corollary \ref{thm:AsymptoticVariance2}. Then
$$
\frac{1}{\sqrt{\lambda_d(W)}} \big(\eta_{k_1}(W)-\BE
\eta_{k_1}(W),\hdots,\eta_{k_m}(W)-\BE
\eta_{k_m}(W)\big) \overset{d}{\longrightarrow} N_\Sigma \quad \text{as} \quad r(W) \to\infty.
$$
\end{corollary}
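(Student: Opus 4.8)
The plan is to obtain this corollary directly from Theorem \ref{thm:CLTSubgraphs} together with the elementary fact that a linear image of a sequence converging in distribution to a Gaussian vector again converges to a Gaussian vector. The starting point is that for each $k\in\N$ the set $\bG_k$ of isomorphism types of connected graphs on the vertex set $[k]$ is \emph{finite}, so that, by the definition in Section \ref{secpre}, the count $\eta_{k}(W)=\sum_{G\in\bG_k}\eta_{\varphi,G}(W)$ is a \emph{finite} sum of component counts of the kind appearing in Theorem \ref{thm:CLTSubgraphs}. Writing $H_1,\dots,H_p$ for an enumeration of the finite family $\bigcup_{i=1}^m\bG_{k_i}$ of distinct connected graphs, and $A=(A_{i,a})\in\R^{m\times p}$ for the $0$--$1$ matrix with $A_{i,a}=\1\{H_a\in\bG_{k_i}\}$, one has, for every $W$,
\begin{equation*}
\big(\eta_{k_1}(W),\dots,\eta_{k_m}(W)\big)^{\!\top}=A\,\big(\eta_{H_1}(W),\dots,\eta_{H_p}(W)\big)^{\!\top},
\end{equation*}
and, by linearity of expectation, the same identity holds after centring.

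First I would apply Theorem \ref{thm:CLTSubgraphs} to the graphs $H_1,\dots,H_p$, which gives that $\lambda_d(W)^{-1/2}$ times the centred vector $(\eta_{H_1}(W),\dots,\eta_{H_p}(W))$ converges in distribution, as $r(W)\to\infty$, to $N_{\Sigma'}$ with $\Sigma'=(\sigma_{\varphi,\varphi}(H_a,H_b))_{a,b\in[p]}$. Multiplying by the fixed matrix $A$ and using the continuous mapping theorem (together with the fact that $A N_{\Sigma'}$ is a centred Gaussian vector with covariance matrix $A\Sigma'A^{\!\top}$) then yields the asserted convergence of $\lambda_d(W)^{-1/2}$ times the centred vector $(\eta_{k_1}(W),\dots,\eta_{k_m}(W))$ to $N_{A\Sigma'A^{\!\top}}$.

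It then remains only to identify $A\Sigma'A^{\!\top}$ with $\Sigma=(\sigma_{\varphi,\varphi}^{(k_i,k_j)})_{i,j\in[m]}$, i.e.\ to verify that $\sigma_{\varphi,\varphi}^{(k,l)}=\sum_{G\in\bG_k}\sum_{H\in\bG_l}\sigma_{\varphi,\varphi}(G,H)$ for all $k,l\in\N$. This is a one-line consequence of bilinearity of the covariance: since $\eta_k(W)$ and $\eta_l(W)$ are finite sums as above, $\C(\eta_k(W),\eta_l(W))=\sum_{G\in\bG_k}\sum_{H\in\bG_l}\C(\eta_{\varphi,G}(W),\eta_{\varphi,H}(W))$; dividing by $\lambda_d(W)$, letting $r(W)\to\infty$, and using Theorem \ref{thm:AsymptoticVariance} for each of the finitely many summands on the right and Corollary \ref{thm:AsymptoticVariance2} for the limit on the left gives the identity. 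I do not expect any genuine obstacle here: the only point that truly needs to be invoked is the finiteness of $\bG_k$, which is exactly what makes the reduction to Theorem \ref{thm:CLTSubgraphs} and the use of bilinearity legitimate. (If the $k_i$ are not pairwise distinct, $\Sigma$ is merely positive semidefinite and the limiting vector has the corresponding coordinates almost surely equal, which causes no difficulty.)
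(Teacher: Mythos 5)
Your proposal is correct and is exactly the (implicit) argument the paper intends: the corollary is stated as an immediate consequence of Theorem \ref{thm:CLTSubgraphs}, using that $\eta_{k}(W)=\sum_{G\in\bG_k}\eta_{\varphi,G}(W)$ is a finite sum, so the vector of $k$-component counts is a fixed linear image of the vector of graph-specific counts, and the covariance identification $\sigma_{\varphi,\varphi}^{(k,l)}=\sum_{G\in\bG_k}\sum_{H\in\bG_l}\sigma_{\varphi,\varphi}(G,H)$ follows from bilinearity together with Theorem \ref{thm:AsymptoticVariance} and Corollary \ref{thm:AsymptoticVariance2}. Your filling-in of the details (the matrix $A$, the continuous mapping step, and the semidefinite case of repeated $k_i$) is accurate and complete.
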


In \cite[Theorem 1.1 and Theorem 4.1]{BrugMeester04} univariate central limit theorems for the number of $k$-components, $k\in\N$, were derived in case of a monotone and isotropic connection function that has also bounded support if $k\geq 2$. For the number of isolated vertices a central limit theorem with an erroneous proof was given in \cite{RoySarkar2003} (see the discussion in \cite{BrugMeester04}).

Theorem \ref{thm:CLTSubgraphs} will be deduced from the following univariate central limit theorem for the random variables $S_{a,G}(W)$ with $G=(G_1,\ldots,G_m)\in \mathbf{G}_\varphi^{m,\ne}$, $m\in\N$, $a=(a_1,\ldots,a_m)\in\R^m$ and $W\in\mathcal{K}^d$ introduced in \eqref{eqn:functional.S.a.G}. Recall the definition of $\mathbf{G}_\varphi^{m,\ne}$ given in \eqref{eqn:set.G.ne} and that $N$ denotes a standard normal random variable.

\begin{theorem}\label{thm:CLTGeneral}
Assume that \eqref{e5.1} is satisfied, let $m\in\N$, $G=(G_1,\hdots,G_m)\in\mathbf{G}_\varphi^{m,\ne}$ and $a=(a_1,\hdots,a_m)\in\R^m$ with $a\neq 0$. Then
$$
\frac{S_{a,G}(W) -\BE S_{a,G}(W)}{\sqrt{\V S_{a,G}(W)}} \longrightarrow N \quad \text{as} \quad r(W)\to\infty,
$$
where the convergence holds in the $d_1$-distance and, in particular, in distribution.
\end{theorem}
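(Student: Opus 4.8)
The plan is to deduce the statement from Theorem~\ref{thm:General.CLT} after an approximation that replaces $\varphi$ by a connection function with bounded support. For $\ell>0$ put $\varphi^{(\ell)}:=\varphi\,\mathbf 1\{|\cdot|\le\ell\}$, so that $\varphi^{(\ell)}\le\varphi$, translation invariance and symmetry are preserved, and $0<m_{\varphi^{(\ell)}}\le m_\varphi<\infty$ once $\ell$ is large. By Fatou's lemma applied to the expectation formula of Proposition~\ref{p1.2}, together with the pointwise convergence $p_{\varphi^{(\ell)},G_i}\to p_{\varphi,G_i}$ as $\ell\to\infty$, each $G_i$ occurs with positive probability in $\Gamma_{\varphi^{(\ell)}}(\eta)$ for $\ell$ large, whence $(G_1,\dots,G_m)\in\mathbf G_{\varphi^{(\ell)}}^{m,\ne}$; in particular Theorem~\ref{thm:AsymptoticVariance} and Corollary~\ref{cor:lowerBoundVar} apply with $\varphi$ replaced by $\varphi^{(\ell)}$ and also with the pair $(\varphi,\varphi^{(\ell)})$ since $\varphi^{(\ell)}\le\varphi$.

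\emph{Step 1: the bounded-support case.} Fix $\ell$ large, write $K:=\max_i|G_i|$, $F_0:=S_{\varphi^{(\ell)},a,G}(W)$ and $F:=(F_0-\BE F_0)/\sqrt{\V F_0}$. Every component counted in $F_0$ has at most $K$ vertices, and adding a vertex $x$ to $\eta$ changes $F_0$ only by merging the components incident to $x$ into one; hence $|\Delta_xF_0|\le|a|_\infty(\deg(x)+1)$ with $\deg(x)$ Poisson distributed with parameter $\beta m_{\varphi^{(\ell)}}$, so that $\BE|\Delta_xF_0|^p$ and the analogous quantities for $\Delta^2_{x_1,x_2}F_0$ are bounded uniformly in $x$ and $W$. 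Because $\varphi^{(\ell)}$ is supported in $B^d(0,\ell)$, one has $\Delta_xF_0=0$ unless $x\in W\oplus B^d(0,K\ell)$, and $\Delta^2_{x_1,x_2}F_0=0$ unless, additionally, $|x_1-x_2|\le (2K+1)\ell$. Inserting these two facts into the definitions of $\gamma_1,\dots,\gamma_6$ and of the integrals in \eqref{eqn:Integrability1}, \eqref{eqn:Integrability2} shows that all integrability hypotheses of Theorem~\ref{thm:General.CLT} hold (bounded integrands supported on sets of finite measure) and that $\gamma_1(F_0),\gamma_2(F_0)\le C\sqrt{\lambda_d(W)}$ and $\gamma_3(F_0),\gamma_5(F_0)^2,\int[\BE(\Delta_xF_0)^4]^{3/4}\,\lambda_d(dx)\le C\lambda_d(W)$ for $r(W)$ large, with $C$ depending only on $\beta,\varphi^{(\ell)},a,G$; the fourth moment $\BE F^4$ stays bounded by Lemma~\ref{lem:4.moment}. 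Corollary~\ref{cor:lowerBoundVar} gives $\V F_0\ge c\,\lambda_d(W)$ with $c>0$ for $r(W)$ large, and by linearity of the difference operators $\gamma_i(F)=\V(F_0)^{-1}\gamma_i(F_0)$ for $i\in\{1,2,5,6\}$ while $\gamma_i(F)=\V(F_0)^{-3/2}\gamma_i(F_0)$ for $i\in\{3,4\}$. Hence Theorem~\ref{thm:General.CLT} yields
$$
d_1\!\Big(\tfrac{S_{\varphi^{(\ell)},a,G}(W)-\BE S_{\varphi^{(\ell)},a,G}(W)}{\sqrt{\V S_{\varphi^{(\ell)},a,G}(W)}},\,N\Big)\le C_\ell\,\lambda_d(W)^{-1/2}\longrightarrow0\qquad\text{as }r(W)\to\infty.
$$

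\emph{Step 2: removing the truncation.} Set $\tilde S_\varphi(W):=(S_{\varphi,a,G}(W)-\BE S_{\varphi,a,G}(W))/\sqrt{\V S_{\varphi,a,G}(W)}$ and define $\tilde S_{\varphi^{(\ell)}}(W)$ analogously; both have mean $0$ and variance $1$. Using the triangle inequality and $d_1(X,Y)\le\|X-Y\|_2$,
$$
d_1\big(\tilde S_\varphi(W),N\big)\le\big\|\tilde S_\varphi(W)-\tilde S_{\varphi^{(\ell)}}(W)\big\|_2+d_1\big(\tilde S_{\varphi^{(\ell)}}(W),N\big).
$$
Since $\Gamma_\varphi(\eta)$ and $\Gamma_{\varphi^{(\ell)}}(\eta)$ are coupled through the same edge marking and $\varphi^{(\ell)}\le\varphi$, Proposition~\ref{p3.7} and Theorem~\ref{thm:AsymptoticVariance} give, as $r(W)\to\infty$, that $\V S_{\varphi,a,G}(W)/\lambda_d(W)\to v:=\sum_{i,j}a_ia_j\sigma_{\varphi,\varphi}(G_i,G_j)>0$, $\V S_{\varphi^{(\ell)},a,G}(W)/\lambda_d(W)\to v_\ell:=\sum_{i,j}a_ia_j\sigma_{\varphi^{(\ell)},\varphi^{(\ell)}}(G_i,G_j)>0$ and $\C(S_{\varphi,a,G}(W),S_{\varphi^{(\ell)},a,G}(W))/\lambda_d(W)\to v_\ell':=\sum_{i,j}a_ia_j\sigma_{\varphi,\varphi^{(\ell)}}(G_i,G_j)$, hence
$$
\big\|\tilde S_\varphi(W)-\tilde S_{\varphi^{(\ell)}}(W)\big\|_2^2=2-2\,\C\big(\tilde S_\varphi(W),\tilde S_{\varphi^{(\ell)}}(W)\big)\longrightarrow 2-\frac{2v_\ell'}{\sqrt{v\,v_\ell}}.
$$
As $\ell\to\infty$ the integrands in the formulas of Theorem~\ref{thm:AsymptoticVariance} for $\sigma_{\varphi,\varphi^{(\ell)}}(G_i,G_j)$ and $\sigma_{\varphi^{(\ell)},\varphi^{(\ell)}}(G_i,G_j)$ converge pointwise to those for $\sigma_{\varphi,\varphi}(G_i,G_j)$ and are dominated, uniformly in $\ell$ because $\varphi^{(\ell)}\le\varphi$, by the integrable majorant built in the proof of that theorem; hence $v_\ell',v_\ell\to v$ and $2-2v_\ell'/\sqrt{v\,v_\ell}\to0$. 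Combining this with Step~1, $\limsup_{r(W)\to\infty}d_1(\tilde S_\varphi(W),N)\le(2-2v_\ell'/\sqrt{v\,v_\ell})^{1/2}$ for every large $\ell$, and letting $\ell\to\infty$ shows $\limsup_{r(W)\to\infty}d_1(\tilde S_\varphi(W),N)=0$, which is the claim.

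The substantive part is Step~1 — the structural analysis showing that $\Delta_x$ and $\Delta^2_{x_1,x_2}$ applied to $S_{\varphi^{(\ell)},a,G}(W)$ are dominated by degree-type functionals with all moments finite and vanish outside bounded neighbourhoods of $W$ — together with the bookkeeping that converts this into the bounds on $\gamma_1,\dots,\gamma_6$. The passage through bounded-support connection functions in Step~2 is genuinely needed rather than cosmetic: for a connection function with heavy tails the integrability conditions \eqref{eqn:Integrability1}–\eqref{eqn:Integrability2} of Theorem~\ref{thm:General.CLT} need not hold for $S_{\varphi,a,G}(W)$ itself, since $\BP(\Delta^2_{x_1,x_2}S_{\varphi,a,G}(W)\ne0)$ decays only like a convolution power of $\varphi$, whose fractional powers are in general not integrable.
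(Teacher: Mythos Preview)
Your proof is correct and follows essentially the same two-step strategy as the paper: truncate $\varphi$ to bounded support, establish the CLT there via Theorem~\ref{thm:General.CLT}, and then remove the truncation by an $L^2$-approximation controlled by Theorem~\ref{thm:AsymptoticVariance} and the dominated-convergence argument that constitutes Lemma~\ref{lem:L2Approximation}. The only organisational difference is that the paper invokes the quantitative Theorem~\ref{thm:CLTWasserstein} as a black box for Step~1 (whose proof, via Lemmas~\ref{lem.first.diff}--\ref{lem.min}, is deferred), whereas you inline a simplified version of that argument exploiting directly that for bounded support the difference operators vanish outside a fixed dilation of $W$; your route is thus more self-contained but covers the same ground.
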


Under a slightly stronger integrability condition on $\varphi$ than \eqref{e5.1} we are even able to derive quantitative univariate central limit theorems. In the following we assume that there exists a function $\tilde{\varphi}: [0,\infty)\to[0,1]$ such that
\begin{equation}\label{eqn:TildeVarphi}
\varphi(x)\le \tilde\varphi(|x|),\quad x\in\R^d, \quad \tilde{\varphi}(s)\geq \tilde{\varphi}(t), \quad 0\leq s \leq t, \quad \text{and} \quad
\int_{\R^d} \tilde\varphi(|x|)^{1/3} dx < \infty.
\end{equation}
Note that the last condition implies the upper bound in \eqref{e5.1}.

\begin{theorem}\label{thm:QuantitativeGeneral}
  Assume $m_\varphi>0$ and \eqref{eqn:TildeVarphi}. Then for any $m\in\N$,
  $G=(G_1,\hdots,G_m)\in\mathbf{G}_\varphi^{m,\ne}$
  and $a=(a_1,\hdots,a_m)\in\R^m$ with $a\neq 0$ there exist constants
  $C, \tau>0$ only depending on $\beta$, $\varphi$, $\tilde{\varphi}$, $G$ and $a$ such that
$$
d_K\bigg(\frac{S_{a,G}(W) -\BE S_{a,G}(W)}{\sqrt{\V
    S_{a,G}(W)}},N \bigg) \leq \frac{C}{\sqrt{\lambda_d(W)}}
$$
for all $W\in\mathcal{K}^d$ with $r(W)\geq \tau$.
\end{theorem}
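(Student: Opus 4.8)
The plan is to apply the general normal approximation bound of Theorem~\ref{thm:General.CLT} to the standardised functional
$$F:=\frac{S_{a,G}(W)-\BE S_{a,G}(W)}{\sqrt{\V S_{a,G}(W)}},$$
which lies in $L_\xi$ because the edge set $\chi$, hence $\Gamma_\varphi(\eta)$ and each count $\eta_{\varphi,G_i}(W)$, is a measurable function of $\xi$. By Corollary~\ref{cor:lowerBoundVar} there are $\tau_0,c_0>0$ with $\V S_{a,G}(W)\ge c_0\lambda_d(W)$ whenever $r(W)\ge\tau_0$, and the crude bound $|S_{a,G}(W)|\le|a|_\infty\,\eta(W)$ together with the Poisson law of $\eta(W)$ gives $\BE F^4<\infty$; so it remains to verify the integrability hypotheses of Theorem~\ref{thm:General.CLT} and to show $\gamma_1+\cdots+\gamma_6\le C/\sqrt{\lambda_d(W)}$ for $r(W)$ large, with the final $\tau$ then governed by $\tau_0$.

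The heart of the proof is a collection of moment estimates for the difference operators $\Delta_xS_{a,G}(W)$ and $\Delta^2_{x_1,x_2}S_{a,G}(W)$. Inserting a vertex into the RCM only merges existing components and produces at most one new component containing the inserted point, so, writing $\deg_x$ for the degree of $x$ in $\Gamma_\varphi(\eta\cup\{x\})$, one has $|\Delta_xS_{a,G}(W)|\le|a|_\infty(\deg_x+1)$, and this vanishes unless $x\overset{\le K}{\llra}W$ in $\Gamma_\varphi(\eta\cup\{x\})$, where $K:=\max_i|G_i|$ (an affected component has at most $K$ vertices and lexicographic minimum in $W$). Likewise $|\Delta^2_{x_1,x_2}S_{a,G}(W)|\le C(\deg_{x_1}+\deg_{x_2}+1)$, vanishing unless $x_1,x_2$ lie in one component of $\Gamma_\varphi(\eta\cup\{x_1,x_2\})$ spanned by $O(K)$ edges which is moreover joined to $W$ by $O(K)$ edges. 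The essential probabilistic input is that, independently of $\eta$, the neighbours of an inserted vertex $x$ form a Poisson process of intensity $\beta\varphi(x-\cdot)$, whence $\deg_x$ is $\mathrm{Poisson}(\beta m_\varphi)$ and all its moments are bounded uniformly in $x$. Combining this with H\"older's inequality (placing an exponent close to one on the indicator, so the power of the connection probability in the resulting bound is close to one), with the multivariate Mecke equation \eqref{Mecke}, and with a path count, the moments $\BE|\Delta_xS_{a,G}(W)|^p$ and $\BE|\Delta^2_{x_1,x_2}S_{a,G}(W)|^p$ are dominated by finite sums of iterated convolution factors of the form $\sum_{j\le K}\beta^{\,j}\int_W\varphi^{*j}(x-w)\,dw$ and $\sum_{j\le K}\beta^{\,j}\varphi^{*j}(x_1-x_2)$.

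Assumption \eqref{eqn:TildeVarphi} enters precisely here. An annulus estimate shows each $\varphi^{*j}$ is dominated by a radially decreasing function whose tail is comparable with $\tilde\varphi$, so $\int_{\R^d}(\varphi^{*j})^{s}<\infty$ for every $s\ge1/3$ (here $\tilde\varphi\le1$ forces $\tilde\varphi^{\,s}\le\tilde\varphi^{\,1/3}$); and covering a convex body $W$ of large inradius by $O(\lambda_d(W))$ unit balls, together with the subadditivity of $t\mapsto t^{s}$, yields $\int_{\R^d}\big(\int_W\varphi^{*j}(x-w)\,dw\big)^{s}dx\le C\lambda_d(W)$ for every $s\ge1/3$. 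Since the bounds of Theorem~\ref{thm:General.CLT} involve the first two difference operators only through powers at most four, each of $\gamma_1,\dots,\gamma_6$---after dividing out the powers of $\sqrt{\V S_{a,G}(W)}\asymp\sqrt{\lambda_d(W)}$ and, for $\gamma_4$, bounding $\BE F^4$ via Lemma~\ref{lem:4.moment}---reduces to an integral of a product of such convolution factors whose exponents, if the H\"older and Cauchy--Schwarz steps are arranged correctly, all lie in $[1/3,1]$. Integrating the ``pendant'' variables first, using the convolution factors that couple them to the rest, and then the single remaining variable against $\big(\int_W\varphi^{*j}\big)^{s}$, bounds each such integral by $O(\lambda_d(W))$, hence each $\gamma_i$ by $O(\lambda_d(W)^{-1/2})$; the same estimates yield $\gamma_5,\gamma_6<\infty$ and \eqref{eqn:Integrability1}--\eqref{eqn:Integrability2}.

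The main obstacle is the second-order estimate for $\BE|\Delta^2_{x_1,x_2}S_{a,G}(W)|^p$: one must control, by a Mecke/$U$-statistic computation, how many $W$-relevant components two inserted vertices can simultaneously disrupt, while extracting joint spatial decay in both $d(x_1,W)$ and $|x_1-x_2|$; and one must organise the H\"older and Cauchy--Schwarz steps so that every surviving exponent is at least $1/3$ (so \eqref{eqn:TildeVarphi} applies) and the order of integration so that exactly one variable contributes a factor $\lambda_d(W)$ while all others contribute constants. This bookkeeping, rather than any single hard inequality, is the delicate part of the argument.
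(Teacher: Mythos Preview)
Your proposal is correct and follows essentially the same strategy as the paper: apply Theorem~\ref{thm:General.CLT} after establishing the variance lower bound from Corollary~\ref{cor:lowerBoundVar}, pointwise bounds on the difference operators (the paper's Lemma~\ref{lem.first.diff}), and moment bounds for these via Mecke and path-counting (Lemma~\ref{lem.diff.bound}). The paper organises the moment estimates slightly differently---extracting factors $\tilde\varphi(d(x,W)/k)^{2/3}$ via a longest-edge-in-a-path trick and integrating via the local Steiner formula (Lemma~\ref{lem.min}) rather than your convolution factors $\varphi^{*j}$ and unit-ball covering---but the two packagings are equivalent.
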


For component counts Theorem \ref{thm:QuantitativeGeneral} leads to the following bounds for the
Kolmogorov distance.

\begin{corollary}\label{cor:QuantitativeSupgraph}
  Assume $m_\varphi>0$ and \eqref{eqn:TildeVarphi}.
\begin{itemize}
\item [(a)] For any $G\in\mathbf{G}_\varphi$ there exist constants $C,\tau>0$ only depending on $\beta$, $\varphi$, $\tilde{\varphi}$ and $G$ such that
$$
d_K\bigg(\frac{\eta_{G}(W)-\BE\eta_{G}(W)}{\sqrt{\V
    \eta_{G}(W)}},N\bigg) \leq \frac{C}{\sqrt{\lambda_d(W)}}
$$
for all $W\in\mathcal{K}^d$ with $r(W)\geq \tau$.
\item [(b)] For any $k\in\N$ there exist constants $C,\tau>0$ only depending on $\beta$, $\varphi$, $\tilde{\varphi}$ and $k$ such that
$$
d_K\bigg(\frac{\eta_{k}(W)-\BE\eta_{k}(W)}{\sqrt{\V
    \eta_{k}(W)}},N\bigg) \leq \frac{C}{\sqrt{\lambda_d(W)}}
$$
for all $W\in\mathcal{K}^d$ with $r(W)\geq \tau$.
\end{itemize}
\end{corollary}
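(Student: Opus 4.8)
The plan is to read off both bounds from Theorem~\ref{thm:QuantitativeGeneral} by choosing the tuple of graphs and the weight vector appropriately, so that the functional $S_{a,G}(W)$ of \eqref{eqn:functional.S.a.G} becomes exactly $\eta_G(W)$ in part~(a) and $\eta_k(W)$ in part~(b).

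For part~(a) I would fix $G\in\mathbf{G}_\varphi$ and apply Theorem~\ref{thm:QuantitativeGeneral} with $m=1$, the one-tuple $(G)\in\mathbf{G}_\varphi^{1,\ne}$ and weight $a=(1)\neq 0$. By the definition of $S_{a,G}$ this gives $S_{a,G}(W)=\eta_{\varphi,G}(W)=\eta_G(W)$, so Theorem~\ref{thm:QuantitativeGeneral} provides constants $C,\tau>0$, a priori depending on $\beta$, $\varphi$, $\tilde\varphi$, $G$ and the weight vector; but the latter is the fixed number $1$, so the dependence is only on $\beta$, $\varphi$, $\tilde\varphi$ and $G$, and the asserted inequality holds for all $W\in\mathcal{K}^d$ with $r(W)\ge\tau$. (Enlarging $\tau$ if necessary, the left-hand side is well defined because $\V\eta_G(W)>0$ by Corollary~\ref{cor:lowerBoundVar}.)

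For part~(b) I would first rewrite $\eta_k(W)$ as a finite sum of the functionals $\eta_G(W)$. Recall from Section~\ref{secpre} that $\eta_k=\bigcup_{G\in\mathbf{G}_k}\eta_{\varphi,G}$, that any component of $\Gamma_\varphi(\eta)$ is isomorphic to at most one representative $G\in\mathbf{G}_k$, that a component isomorphic to a graph in $\mathbf{G}_k\setminus\mathbf{G}_{k,\varphi}$ occurs with probability zero, and hence $\eta_k(W)=\sum_{G\in\mathbf{G}_{k,\varphi}}\eta_G(W)$ almost surely. The index set $\mathbf{G}_{k,\varphi}$ is finite (there are at most $2^{\binom{k}{2}}$ graphs on the vertex set $[k]$) and nonempty: since $m_\varphi>0$ there is a $u\in\R^d$ with $\varphi(u)>0$, and for the collinear points $x_i:=(i-1)u$, $i\in[k]$, the graph $\Gamma_\varphi(\{x_1,\ldots,x_k\})$ contains the spanning path $x_1\cdots x_k$ and is therefore connected with probability at least $\varphi(u)^{k-1}>0$. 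Enumerating $\mathbf{G}_{k,\varphi}=\{G_1,\ldots,G_m\}$ gives $(G_1,\ldots,G_m)\in\mathbf{G}_\varphi^{m,\ne}$, and with $a=(1,\ldots,1)\neq 0$ one has $S_{a,G}(W)=\sum_{i=1}^m\eta_{G_i}(W)=\eta_k(W)$. Applying Theorem~\ref{thm:QuantitativeGeneral} to this $S_{a,G}(W)$ yields constants $C,\tau>0$ with the claimed Kolmogorov bound; since $m$, the graphs $G_1,\ldots,G_m$ and the weight vector $a$ are all determined by $k$ and $\varphi$, these constants depend only on $\beta$, $\varphi$, $\tilde\varphi$ and $k$.

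I do not expect a genuine obstacle here beyond Theorem~\ref{thm:QuantitativeGeneral} itself: the only points needing a line of verification are that $\mathbf{G}_{k,\varphi}$ is a finite, nonempty index set and that $\eta_k(W)$ agrees almost surely with $\sum_{G\in\mathbf{G}_{k,\varphi}}\eta_G(W)$, so that it is precisely an instance of the functional in \eqref{eqn:functional.S.a.G}; the constants then transfer verbatim, with the dependence on the (now fixed) weight vector absorbed into the dependence on $G$, respectively on $k$.
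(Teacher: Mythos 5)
Your proposal is correct and is exactly the argument the paper intends: the corollary is stated as an immediate specialization of Theorem~\ref{thm:QuantitativeGeneral}, with $m=1$, $a=(1)$ for part~(a) and with $G=(G_1,\ldots,G_m)$ enumerating the finite nonempty set $\mathbf{G}_{k,\varphi}$ and $a=(1,\ldots,1)$ for part~(b). Your extra verifications (that $\mathbf{G}_{k,\varphi}$ is finite and nonempty and that $\eta_k(W)=\sum_{G\in\mathbf{G}_{k,\varphi}}\eta_G(W)$ a.s.) are precisely the routine points the paper leaves implicit.
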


One can also show quantitative bounds for the Wasserstein distance.

\begin{theorem}\label{thm:CLTWasserstein}
  The assertions of Theorem \ref{thm:QuantitativeGeneral} and Corollary
  \ref{cor:QuantitativeSupgraph} also hold for the Wasserstein
  distance $d_1$.
\end{theorem}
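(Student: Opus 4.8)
The plan is to extract the $d_1$-bound from exactly the same application of Theorem~\ref{thm:General.CLT} that already underlies the proof of Theorem~\ref{thm:QuantitativeGeneral} and Corollary~\ref{cor:QuantitativeSupgraph}. First I would recall the set-up of that proof. For $m\in\N$, $G=(G_1,\ldots,G_m)\in\mathbf{G}_\varphi^{m,\ne}$ and $a\in\R^m\setminus\{0\}$ one standardizes
$$
F_W:=\frac{S_{a,G}(W)-\BE S_{a,G}(W)}{\sqrt{\V S_{a,G}(W)}},\qquad W\in\mathcal{K}^d,
$$
chooses $\tau$ via Corollary~\ref{cor:lowerBoundVar} so that $\V S_{a,G}(W)\ge\tfrac12\sum_{i,j=1}^m a_ia_j\sigma_{\varphi,\varphi}(G_i,G_j)\,\lambda_d(W)>0$ for $r(W)\ge\tau$, and then — using assumption~\eqref{eqn:TildeVarphi} — verifies all hypotheses of Theorem~\ref{thm:General.CLT}, namely $\BE F_W^4<\infty$, $\gamma_5,\gamma_6<\infty$ and the integrability conditions~\eqref{eqn:Integrability1} and~\eqref{eqn:Integrability2}, and establishes the estimates $\gamma_i\le C'\lambda_d(W)^{-1/2}$, $i\in[6]$, for $r(W)\ge\tau$ with a constant $C'$ depending only on $\beta$, $\varphi$, $\tilde{\varphi}$, $G$ and $a$.

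Given all this, the Wasserstein statement for $S_{a,G}$ is immediate, because the $d_1$-bound in Theorem~\ref{thm:General.CLT} involves only $\gamma_1$, $\gamma_2$ and $\gamma_3$: one concludes
$$
d_1\bigl(F_W,N\bigr)\le\gamma_1+\gamma_2+\gamma_3\le 3C'\lambda_d(W)^{-1/2}
$$
for all $W\in\mathcal{K}^d$ with $r(W)\ge\tau$, which is the $d_1$-analogue of Theorem~\ref{thm:QuantitativeGeneral}. For the $d_1$-version of Corollary~\ref{cor:QuantitativeSupgraph} I would then specialize exactly as in its Kolmogorov version: part~(a) is the case $m=1$, $G=(G)$, $a=(1)$, and part~(b) is the case $a=(1,\ldots,1)$ with $G$ an enumeration of the finite set $\mathbf{G}_{k,\varphi}$, using that $\eta_k(W)=\sum_{G\in\mathbf{G}_{k,\varphi}}\eta_{\varphi,G}(W)=S_{a,G}(W)$ almost surely; the constants $C$ and $\tau$ then retain the stated dependence on $\beta$, $\varphi$, $\tilde{\varphi}$ and $G$, respectively $k$.

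I do not expect a genuine obstacle; the only thing to be checked is a bookkeeping point, namely that in the proof of Theorem~\ref{thm:QuantitativeGeneral} the bounds on $\gamma_1$, $\gamma_2$, $\gamma_3$ are obtained directly from the moment estimates for the first two difference operators $\Delta_{x}F_W$ and $\Delta^2_{x_1,x_2}F_W$ together with the lower variance bound, and do not draw on $\gamma_4$, $\gamma_5$, $\gamma_6$ for their validity — these latter quantities enter only through the hypotheses of Theorem~\ref{thm:General.CLT}, which have been established regardless. Once this is confirmed, the displayed chain of inequalities goes through with the same constants, which completes the proof.
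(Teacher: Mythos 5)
Your proposal is correct and matches the paper's argument: the paper in fact proves Theorem \ref{thm:QuantitativeGeneral} and Theorem \ref{thm:CLTWasserstein} in a single combined proof, applying Theorem \ref{thm:General.CLT} once and reading off both the $d_K$-bound (via $\gamma_1+\cdots+\gamma_6$) and the $d_1$-bound (via $\gamma_1+\gamma_2+\gamma_3$) from the same estimates $\gamma_i\le C'\lambda_d(W)^{-1/2}$ together with the lower variance bound of Corollary \ref{cor:lowerBoundVar}. Your bookkeeping point is also right: $\gamma_4,\gamma_5,\gamma_6$ enter the $d_1$-statement only through the hypotheses of Theorem \ref{thm:General.CLT}, which are verified in that proof regardless.
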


The proofs of the findings presented in this section are organized as follows: After deriving Theorem \ref{thm:CLTSubgraphs} from Theorem \ref{thm:CLTGeneral}, Theorem \ref{thm:CLTGeneral} is proven by Theorem \ref{thm:CLTWasserstein}. Thereafter, the quantitative bounds in Theorems \ref{thm:QuantitativeGeneral} and \ref{thm:CLTWasserstein} are established by applying Theorem \ref{thm:General.CLT}.

\begin{proof}[Proof of Theorem \ref{thm:CLTSubgraphs}]
It follows from Theorem \ref{thm:VarianceS} and Theorem
\ref{thm:CLTGeneral} that for any $a=(a_1,\hdots,a_m)\in\R^m$ with $a\neq 0$,
$$
\frac{\sum_{i=1}^m a_i (\eta_{G_i}(W) - \BE
  \eta_{G_i}(W))}{\sqrt{\lambda_d(W)}}
\overset{d}{\longrightarrow} a^{T} N_\Sigma a
$$
as $r(W)\to\infty$. Since this is obviously true for $a=0$, the
Cramer-Wold theorem yields the assertion.
\end{proof}

We prepare the proof of Theorem \ref{thm:CLTGeneral} by the following lemma.

\begin{lemma}\label{lem:L2Approximation}
  Let \eqref{e5.1} be satisfied and let $(\psi_n)_{n\in\N}$ be a family of connection functions such
  that $\psi_n\leq\varphi$ for any $n\in\N$ and
  $\lim_{n\to\infty}\psi_n(x)=\varphi(x)$ for $x\in\R^d$. Then, for all $G,H\in\mathbf{G}$,
$$
\lim_{n\to\infty} \sigma_{\varphi,\psi_n}(G,H) =
\sigma_{\varphi,\varphi}(G,H) \quad \text{ and } \quad
\lim_{n\to\infty} \sigma_{\psi_n,\psi_n}(G,H) =
\sigma_{\varphi,\varphi}(G,H).
$$
\end{lemma}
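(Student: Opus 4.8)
The plan is to invoke the explicit formulas for $\sigma_{\varphi,\psi}(G,H)$ provided by Theorem~\ref{thm:AsymptoticVariance} (which applies to both $(\varphi,\psi_n)$ since $\psi_n\le\varphi$, and to $(\psi_n,\psi_n)$ since $\psi_n\le\psi_n$) and then pass to the limit $n\to\infty$ inside the integrals by means of the dominated convergence theorem. Since $\lim_n\psi_n(x)=\varphi(x)$ pointwise, we also have $\bar\psi_n\to\bar\varphi$ pointwise, and hence all the integrands appearing in $\sigma_{\varphi,\psi_n}(G,H)$ and $\sigma_{\psi_n,\psi_n}(G,H)$ converge pointwise to the corresponding integrands in $\sigma_{\varphi,\varphi}(G,H)$. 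In detail: the factors $p_{\psi_n,H}$ and $p_{\psi_n,G}$ converge to $p_{\varphi,H}$, $p_{\varphi,G}$ (these are polynomials in the entries $\varphi(x_i-x_j)$, $\bar\varphi(x_i-x_j)$, hence continuous in $\psi_n$ pointwise); the products $\prod_{i,j}\bar\psi_n(x_i-x_j)$ and the exponential terms $\exp[\beta\int(\cdots)\,dy]$ converge pointwise, the latter by dominated convergence in the inner $y$-integral (the integrand is bounded in absolute value by $\sum_i\varphi(x_i-y)+\sum_j\varphi(x_j-y)$, which is integrable with integral $\le (k+l)m_\varphi<\infty$, using \eqref{e5.5} exactly as in the proof of Theorem~\ref{thm:AsymptoticVariance}).

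The only real work is producing an integrable dominating function, uniform in $n$, for the outer $(x_2,\ldots,x_{k+l})$-integrals. Here I would simply reuse the bound established in the proof of Theorem~\ref{thm:AsymptoticVariance}: there it is shown that, with $h(x_2,\ldots,x_{k+l}):=p_{\varphi,G}(0,x_2,\ldots,x_k)\,p_{\psi,H}(x_{k+1},\ldots,x_{k+l})\,|q_{k,l,\varphi,\psi}(x_2,\ldots,x_{k+l})|$, one has the pointwise estimate
\begin{align*}
h(x_2,\ldots,x_{k+l})
&\le \sum_{I\in\mathcal I_k}\prod_{(i,j)\in I}\varphi(x_i-x_j)\;
\sum_{J\in\mathcal I_l}\prod_{(i,j)\in J}\varphi(x_{i+k}-x_{j+k})\\
&\quad\times\Bigg(\sum_{i=1}^k\sum_{j=k+1}^{k+l}\varphi(x_i-x_j)
+\int\Big(\sum_{i=1}^k\varphi(y-x_i)\Big)\Big(\sum_{j=k+1}^{k+l}\varphi(y-x_j)\Big)\,dy\Bigg),
\end{align*}
with $x_1:=0$, and this majorant is $\lambda^{k+l-1}$-integrable (its integral is a finite combination of the quantities $m_\varphi^{k+l-1}$ and $m_\varphi^{k+l}$ appearing in \eqref{e5.8} and \eqref{e5.10}). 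The crucial point is that, because $\psi_n\le\varphi$ for every $n$, this bound does \emph{not} depend on $n$: indeed $p_{\psi_n,H}\le\sum_{J\in\mathcal I_l}\prod\psi_n\le\sum_{J\in\mathcal I_l}\prod\varphi$ and the estimate for $|q_{k,l,\varphi,\psi_n}|$ derived in that proof only used $\psi_n\le\varphi$ and the elementary inequalities \eqref{e5.5} and $|e^{-a}-e^{-b}|\le|a-b|$. The same majorant dominates the integrand of $\sigma_{\psi_n,\psi_n}(G,H)$ as well, since replacing $\varphi$ by $\psi_n\le\varphi$ in the first slot only makes $p_{\psi_n,G}$ and $|q_{k,l,\psi_n,\psi_n}|$ smaller relative to the $\varphi$-based bounds. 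For the second ($\I\{k=l\}$) summand in the formula of Theorem~\ref{thm:AsymptoticVariance}, the integrand is dominated by $\sum_{I\in\mathcal I_k}\prod_{(i,j)\in I}\varphi(x_i-x_j)$ (the probability that $\Gamma_\varphi(\{0,x_2,\ldots,x_k\})$ is connected, times the exponential which is $\le 1$), again $n$-independent and integrable with integral $|\mathcal I_k|\,m_\varphi^{k-1}$.

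With the uniform integrable dominating function in hand, the dominated convergence theorem applies to each of the (finitely many) summands in the formula of Theorem~\ref{thm:AsymptoticVariance}, and letting $n\to\infty$ gives $\sigma_{\varphi,\psi_n}(G,H)\to\sigma_{\varphi,\varphi}(G,H)$ and $\sigma_{\psi_n,\psi_n}(G,H)\to\sigma_{\varphi,\varphi}(G,H)$, as claimed. The main obstacle is purely bookkeeping: verifying that every bound used in the proof of Theorem~\ref{thm:AsymptoticVariance} survives verbatim when $\psi$ is replaced by a sequence $\psi_n\le\varphi$, so that the dominating function can be chosen independent of $n$; once that is checked, the convergence is immediate. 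One should also note, for completeness, the trivial case $k\ne l$ where the second summand vanishes identically and only the first integral needs to be treated.
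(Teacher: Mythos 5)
Your proposal is correct and follows essentially the same route as the paper: the paper's proof likewise applies dominated convergence to the explicit formulas of Theorem \ref{thm:AsymptoticVariance}, using that the integrable majorants constructed in that proof depend only on $\varphi$ (via $\psi_n\le\varphi$) and are therefore uniform in $n$, while pointwise convergence of $\psi_n$ to $\varphi$ gives pointwise convergence of the integrands. Your write-up merely spells out the bookkeeping that the paper leaves implicit.
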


\begin{proof}
Note that the integrands in the representations of
$\sigma_{\varphi,\psi_n}(G,H)$ and $\sigma_{\psi_n,\psi_n}(G,H)$ given in Theorem \ref{thm:AsymptoticVariance}
are dominated by the integrable functions only depending on $\varphi$ that are derived in the proof of Theorem \ref{thm:AsymptoticVariance}. Due to the pointwise convergence of
$(\psi_n)_{n\in\N}$ to $\varphi$, the integrands also converge
pointwise to the integrands of $\sigma_{\varphi,\varphi}(G,H)$. Thus,
the dominated convergence theorem completes the proof.
\end{proof}

\begin{proof}[Proof of Theorem \ref{thm:CLTGeneral}]
For $n\in\N$
let the connection function $\psi_n:\R^d\to[0,1]$ be given by $\psi_n(x):={\bf
  1}\{|x|\leq n\}\varphi(x)$. Let $n_0\in \N$ be such that $m_{\psi_{n_0}}>0$ and, thus, $m_{\psi_n}>0$ for $n\geq n_0$.
Throughout this proof we add the connection function as a further index to $S_{a,G}(W)$ and use the abbreviation
$$
\widehat{S}_{\chi,a,G}(W) := \frac{S_{\chi,a,G}(W)-\BE S_{\chi,a,G}(W)}{\sqrt{\V S_{\chi,a,G}(W)}}
$$
for $\chi=\varphi$ or $\chi=\psi_n$, $n\geq n_0$.

By the triangle inequality and the
fact that the Wasserstein distance can be bounded by the $L^1$-distance and the $L^2$-distance we obtain that, for $n\geq n_0$,
\begin{align*}
  & d_1\big(\widehat{S}_{\varphi,a,G}(W) ,N\big)\\
  & \leq d_1\bigg( \widehat{S}_{\varphi,a,G}(W),\frac{S_{\psi_n,a,G}(W)-\BE S_{\psi_n,a,G}(W)}{\sqrt{\V S_{\varphi,a,G}(W)}}\bigg)\\
  & \quad + d_1\bigg( \frac{S_{\psi_n,a,G}(W)-\BE S_{\psi_n,a,G}(W)}{\sqrt{\V S_{\varphi,a,G}(W)}},\widehat{S}_{\psi_n,a,G}(W)\bigg)+ d_1\big( \widehat{S}_{\psi_n,a,G}(W),N\big)\\
  & \leq \frac{\sqrt{\V \big(S_{\varphi,a,G}(W)-S_{\psi_n,a,G}(W)\big)}}{\sqrt{\V S_{\varphi,a,G}(W)}} + \bigg|\frac{\sqrt{\V S_{\psi_n,a,G}(W)}}{\sqrt{\V S_{\varphi,a,G}(W)}}-1\bigg|
  + d_1\big( \widehat{S}_{\psi_n,a,G}(W),N\big).
\end{align*}
Since the assumptions of Theorem \ref{thm:CLTWasserstein} are satisfied for $n\geq n_0$, the last term on the right-hand side vanishes as
$r(W)\to\infty$. Consequently, Theorem \ref{thm:AsymptoticVariance}
implies that
\begin{align*}
\limsup_{r(W)\to\infty} d_1\big(\widehat{S}_{\varphi,a,G}(W),N\big)
  & \leq \sqrt{\frac{\sum_{i,j=1}^m a_i a_j (\sigma_{\varphi,\varphi}(G_i,G_j)+\sigma_{\psi_n,\psi_n}(G_i,G_j)-2\sigma_{\varphi,\psi_n}(G_i,G_j))}{\sum_{i,j=1}^m a_i a_j\sigma_{\varphi,\varphi}(G_j,G_j)}}\\
  & \quad +\bigg| \sqrt{\frac{\sum_{i,j=1}^m
      a_i a_j\sigma_{\psi_n,\psi_n}(G_i,G_j)}{\sum_{i,j=1}^m a_i a_j
      \sigma_{\varphi,\varphi}(G_i,G_j)}}-1 \bigg|
\end{align*}
for $n\geq n_0$. Now taking the limit $n\to\infty$ and Lemma \ref{lem:L2Approximation} yield
$$
\lim_{r(W)\to\infty} d_1\big( \widehat{S}_{\varphi,a,G}(W),N\big)=0,
$$
which concludes the proof.
\end{proof}

The rest of this section is devoted to the proofs of Theorem \ref{thm:QuantitativeGeneral} and Theorem \ref{thm:CLTWasserstein}, which are based on the following three lemmas. For $n \in \N$ and $x_1,\ldots, x_n\in\R^d$ we define
$\eta_{x_1,\ldots,x_n} := \eta\cup\{x_1,\ldots,x_n\}$. We refer to Section \ref{secpre} for further notation.

\begin{lemma}\label{lem.first.diff}
Let the assumptions of Theorem \ref{thm:QuantitativeGeneral} prevail and let $k := \max \{|G_1|,\ldots,|G_m|\}$.
Then, for any measurable set $W\subset\R^d$ and $x, y \in \R^d$,
\begin{equation}\label{eqn:bound.first.diff.op}
|\Delta_x S_{a,G}(W)|\le |a |_\infty (\deg(x,\Gamma(\eta_x))+1)\1\big\{x\overset{\le k}{\llra}W \text{ in }\Gamma(\eta_x)\big\}
\end{equation}
and
\begin{align}\label{eqn:bound.sec.diff.op}
|\Delta^2_{x,y} S_{a,G}(W)| &\le | a |_\infty (2\deg(y,\Gamma(\eta_y))+3)\1\big\{x\overset{\le k+1}{\llra}y \text{ in } \Gamma(\eta_{x,y})\big\}\nonumber\\
&\qquad\times\1\big\{x\overset{\le k}{\llra}W \text{ in } \Gamma(\eta_x) \text{ or } y\overset{\le k}{\llra}W \text{ in } \Gamma(\eta_y)\big\}.
\end{align}
\end{lemma}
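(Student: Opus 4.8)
The plan is to reduce everything to bounds for the differences of the individual component counts: since $S_{a,G}(W)=\sum_{i=1}^m a_i\,\eta_{G_i}(W)$ with $|a_i|\le|a|_\infty$, we have $|\Delta_x S_{a,G}(W)|\le|a|_\infty\sum_{i=1}^m|\Delta_x\eta_{G_i}(W)|$ and $|\Delta^2_{x,y}S_{a,G}(W)|\le|a|_\infty\sum_{i=1}^m|\Delta^2_{x,y}\eta_{G_i}(W)|$. The one combinatorial fact used throughout is the \emph{component‑merging principle}: inserting a vertex $v$ into the RCM on a configuration $\zeta$ changes only the component $C$ of $v$ in $\Gamma(\zeta\cup\{v\})$, namely the at most $\deg(v,\Gamma(\zeta\cup\{v\}))$ components of $\Gamma(\zeta)$ meeting a neighbour of $v$ are fused with $v$ into $C$ while every other component is left intact. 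Hence $\sum_i\big|\eta_{G_i}(W)(\zeta\cup\{v\})-\eta_{G_i}(W)(\zeta)\big|\le\deg(v,\Gamma(\zeta\cup\{v\}))+1$, the ``$+1$'' accounting for the possibly newly created component $C$ and the remaining summands for the destroyed ones.

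Applying this with $\zeta=\eta$ and $v=x$ gives $\sum_i|\Delta_x\eta_{G_i}(W)|\le\deg(x,\Gamma(\eta_x))+1$. If $\Delta_x\eta_{G_i}(W)\neq0$ for some $i$, then either $C$ is a component of $\Gamma(\eta_x)$ isomorphic to $G_i$ with lexicographic minimum in $W$, or one of the fused components $D\subset C$ is isomorphic to some $G_i$ with lexicographic minimum in $W$; in both cases the graph involved has at most $k$ vertices, so (being connected) its lexicographic minimum is joined to $x$ in $\Gamma(\eta_x)$ by a path of at most $k$ edges (a path of length at most $|D|-1\le k-1$ inside $D$ from the minimum to a neighbour of $x$, plus one more edge when $D\neq C$). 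Thus $\Delta_x S_{a,G}(W)\neq0$ forces $x\overset{\le k}{\llra}W$ in $\Gamma(\eta_x)$, and \eqref{eqn:bound.first.diff.op} follows.

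For \eqref{eqn:bound.sec.diff.op} the prefactor and the two indicator events are handled separately. Writing $\Delta^2_{x,y}S_{a,G}(W)=\big(f(\xi_{x,y})-f(\xi_x)\big)-\big(f(\xi_y)-f(\xi)\big)$, the first bracket is the change of $S_{a,G}(W)$ under inserting $y$ into $\Gamma(\eta_x)$ and the second under inserting $y$ into $\Gamma(\eta)$, so the component‑merging principle bounds them by $|a|_\infty\big(\deg(y,\Gamma(\eta_{x,y}))+1\big)$ and $|a|_\infty\big(\deg(y,\Gamma(\eta_y))+1\big)$; since $\deg(y,\Gamma(\eta_{x,y}))\le\deg(y,\Gamma(\eta_y))+1$, this produces the factor $|a|_\infty(2\deg(y,\Gamma(\eta_y))+3)$. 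For the indicators I expand each count as $\eta_{G_i}(W)=\sum_D g_D^{(i)}$ over finite vertex sets $D$, where $g_D^{(i)}(I):=\1\{D \text{ is a component of } \Gamma(\eta\cup\{x_j:j\in I\}) \text{ isomorphic to } G_i \text{ with lexicographic minimum in } W\}$ for $I\subset\{x,y\}$, so that $\Delta^2_{x,y}\eta_{G_i}(W)=\sum_D\Delta^2_{x,y}g_D^{(i)}$. Because $g_D^{(i)}$ can depend on whether $x\in I$ (respectively $y\in I$) only when $x\in D$ or $x$ is adjacent to $D$ (respectively for $y$), a nonzero term $\Delta^2_{x,y}g_D^{(i)}$ forces both conditions, and a short case analysis on $D\cap\{x,y\}$ shows: if $x,y\notin D$ then $D$ is a $G_i$‑component of $\Gamma(\eta)$ with lexicographic minimum in $W$ that is adjacent in $\Gamma(\eta_{x,y})$ to both $x$ and $y$; if exactly one of $x,y$ lies in $D$ then $D$ is a $G_i$‑component of $\Gamma(\eta_x)$ respectively $\Gamma(\eta_y)$ with lexicographic minimum in $W$ that is adjacent to the other point; and if $x,y\in D$ then $D$ is a $G_i$‑component of $\Gamma(\eta_{x,y})$ with lexicographic minimum in $W$. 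In each case $|D|\le k$ and $x,y$ are joined in $\Gamma(\eta_{x,y})$ by a path routed through $D$ of length at most $1+(|D|-1)+1\le k+1$, which yields the first indicator. For the second indicator, in the first three cases the lexicographic minimum of $D$ is reachable from $x$ within $k$ edges in $\Gamma(\eta_x)$ or from $y$ within $k$ edges in $\Gamma(\eta_y)$, using that the edges inside $D$ are present in these subgraphs; in the remaining case $x,y\in D$ with $|D|\le k$, and writing $z$ for the lexicographic minimum of $D$ (which we may assume differs from $x$ and $y$, the contrary cases being trivial), a cut‑vertex argument shows that $y$ fails to separate $x$ from $z$ in $D$ or $x$ fails to separate $y$ from $z$ in $D$, so $z$ is joined to $x$ inside $D\setminus\{y\}\subseteq\eta_x$ or to $y$ inside $D\setminus\{x\}\subseteq\eta_y$ by at most $k-1$ edges. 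Hence $\Delta^2_{x,y}S_{a,G}(W)\neq0$ implies both indicator events, and \eqref{eqn:bound.sec.diff.op} follows.

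The main obstacle is the case analysis for the second difference operator, especially the case $x,y\notin D$ of a small component touched by both new points, which is precisely what forces the path length $k+1$ rather than $k$; the case $x,y\in D$ additionally requires the small separation argument, so that removing whichever of $x,y$ is not needed does not disconnect the lexicographic minimum of $D$ from the other point.
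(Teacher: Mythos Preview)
Your proof is correct and follows the same overall strategy as the paper: the component--merging principle for the first difference operator, and for the second the splitting $\Delta^2_{x,y}F=\Delta_yF_x-\Delta_yF$ together with the sum--over--components representation to force $x\overset{\le k+1}{\llra}y$ in $\Gamma(\eta_{x,y})$. The one place where you work harder than necessary is the second indicator. The paper obtains it by applying \eqref{eqn:bound.first.diff.op} directly to each of $\Delta_yF_x$ and $\Delta_yF$, which already carry the indicators $\{y\overset{\le k}{\llra}W$ in $\Gamma(\eta_{x,y})\}$ and $\{y\overset{\le k}{\llra}W$ in $\Gamma(\eta_y)\}$; the former is then weakened to the disjunction in \eqref{eqn:bound.sec.diff.op} by the observation that a witnessing path from $y$ to $W$ of length at most $k$ in $\Gamma(\eta_{x,y})$ either avoids $x$ (yielding the $\Gamma(\eta_y)$ indicator) or passes through $x$, in which case its tail gives a path from $x$ to $W$ in $\Gamma(\eta_x)$ of length less than $k$. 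This replaces your case analysis on $D\cap\{x,y\}$ and in particular the cut--vertex argument in the case $x,y\in D$, which is correct but avoidable.
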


\begin{proof}
We use the abbreviation $F:=S_{a,G}(W)$. If $\Delta_x F\ne 0$, the component of $\Gamma(\eta_x)$ containing $x$ forms a counted copy of one of the graphs $G_1,\ldots,G_m$, or $x$ is connected with at least one component of $\Gamma(\eta)$ that contributes to $F$.
In both cases we have $x\overset{\le k}{\llra}W$ in $\Gamma(\eta_x)$.
The number of counted components in $\Gamma(\eta)$ that are connected with $x$ by an edge in $\Gamma(\eta_x)$ is bounded by $\deg(x,\Gamma(\eta_x))$.
Since the addition of $x$ can also create one new counted component, we obtain \eqref{eqn:bound.first.diff.op}.

For the second difference operator we have the representation
\begin{align}\label{eqn:second.diff.op.alternative}
\Delta^2_{x,y} F=\sum_{\emptyset\neq \mu\subset\eta_{x,y}}h_{\{x,y\}}(\mu)-h_{\{x\}}(\mu)-h_{\{y\}}(\mu)+h(\mu),
\end{align}
where for $A\subset \{x,y\}$, $h_A(\mu):=a_i$ if $\mu\subset \eta\cup A$, the lexicographic minimum
of $\mu$ is in $W$, and $\mu$ is isomorphic to $G_i$ in $\Gamma(\eta\cup A)$ for some $i\in[m]$ and $h_A(\mu):=0$, otherwise, and $h(\mu):=h_{\emptyset}(\mu)$.
Let $\mu\subset \eta_{x,y}$ be finite and non-empty and assume that neither $x$ nor $y$
is connected with $\mu$
in $\Gamma(\eta_{x,y})$. Then $h_{\{x,y\}}(\mu)=h_{\{x\}}(\mu)=h_{\{y\}}(\mu)=h(\mu)$ and
$\mu$ does not contribute to \eqref{eqn:second.diff.op.alternative}. Assume next that $x$ is connected with $\mu$
in $\Gamma(\eta_{x,y})$ but $y$ is not. Then $h_{\{x,y\}}(\mu)=h_{\{x\}}(\mu)$ and
$h_{\{y\}}(\mu)=h(\mu)$ so that $\mu$ does again not contribute to \eqref{eqn:second.diff.op.alternative}.
This shows that $\Delta^2_{x,y} F=0$ unless there is a component of $\Gamma(\eta_{x,y})$
that
contains both $x$ and $y$ and where $x$ and $y$ are connected via at most $k+1$ edges (otherwise the component would be too large). Set $\Delta_{y}F_{x} := f(\xi_{x,y})-f(\xi_{x})$,
where $f$ is a representative of $F$. Noting that
$\Delta^2_{x,y} F=\Delta_{y}F_{x}-\Delta_{y}F$ it now follows that
\begin{align*}
\big|\Delta^2_{x,y} F\big|
\le \1\big\{x \overset{\le k+1}{\llra} y \text{ in } \Gamma(\eta_{x,y})\big\}
(|\Delta_{y}F_{x}| + |\Delta_{y}F|).
\end{align*}
We use \eqref{eqn:bound.first.diff.op} to bound $|\Delta_y F|$ and analogously $|\Delta_y F_x|$, which leads to
\begin{align*}
|\Delta^2_{x,y} F| &\le |a|_\infty \1\big\{x \overset{\le k+1}{\llra} y \text{ in } \Gamma(\eta_{x,y})\big\}\\
&\quad\times\big((\deg(y,\Gamma(\eta_{x,y}) +1)\1\big\{y \overset{\le k}{\llra} W \text{ in } \Gamma(\eta_{x,y})\big\}\\
&\qquad + (\deg(y,\Gamma(\eta_y)+1)\1\big\{y \overset{\le k}{\llra} W \text{ in } \Gamma(\eta_y)\big\}\big)\\
&\le |a|_\infty \1\big\{x \overset{\le k+1}{\llra} y \text{ in } \Gamma(\eta_{x,y})\big\}\1\big\{y \overset{\le k}{\llra} W \text{ in } \Gamma(\eta_{x,y})\big\} (2\deg(y,\Gamma(\eta_y))+3)\\
&\le |a|_\infty(2\deg(y,\Gamma(\eta_y))+3) \1\big\{x \overset{\le k+1}{\llra} y \text{ in } \Gamma(\eta_{x,y})\big\}\\
&\quad\times \1\big\{x\overset{\le k}{\llra}W \text{ in } \Gamma(\eta_x) \text{ or } y\overset{\le k}{\llra}W \text{ in } \Gamma(\eta_y)\big\}.
\end{align*}
This finishes the proof of \eqref{eqn:bound.sec.diff.op}.
\end{proof}

\begin{lemma}\label{lem.diff.bound}
Let the assumptions of Theorem \ref{thm:QuantitativeGeneral} prevail, let $W\subset\R^d$ be a compact set and $k:=\max\{|G_1|,\ldots,|G_m|\}$.
Then there are constants $C_1,C_2,C_3,C_4,C_5>0$ only depending on $\beta$, $\tilde\varphi$, $k$ and $|a|_\infty$ such that, for $x,y,z\in\R^d$,
\begin{align}
\BE(\Delta_x S_{a,G}(W))^4 &\leq C_1 \tilde{\varphi}(d(x,W)/k)^{2/3},\label{eqn:exp.diff.1} \allowdisplaybreaks\\
\BE(\Delta_xS_{a,G}(W))^2(\Delta_y S_{a,G}(W))^2 &\leq C_2\big( \tilde{\varphi}(d(x,W)/k)^{2/3} + \tilde{\varphi}(d(y,W)/k)^{2/3} \big), \label{eqn:exp.diff.2} \allowdisplaybreaks\\
\BE \big(\Delta^2_{x,y}S_{a,G}(W)\big)^4 &\le C_3 \big( \tilde{\varphi}(d(x,W)/(2k))^{2/3} + \tilde{\varphi}(d(y,W)/(2k))^{2/3} \big) \nonumber\\
&\quad\times\tilde{\varphi}(|x-y|/(k+1))^{2/3},\label{eqn:exp.diff.3} \allowdisplaybreaks\\
\BE\big(\Delta^2_{x,z}S_{a,G}(W)\big)^2(\Delta^2_{y,z}S_{a,G}(W))^2 &\leq C_4 \big( \tilde{\varphi}(d(x,W)/(2k))^{2/3} + \tilde{\varphi}(d(y,W)/(2k))^{2/3}\nonumber\\
&\quad \quad\quad + \tilde{\varphi}(d(z,W)/(2k))^{2/3} \big),\label{eqn:exp.diff.4}
\end{align}
and
\begin{align}\label{eqn:exp.diff.5}
\BE &\big(\Delta^2_{x,z}S_{a,G}(W)\big)^2 \big(\Delta^2_{y,z}S_{a,G}(W)\big)^2\nonumber\\
&\leq C_5 \tilde{\varphi}(|x-z|/(2k+1))^{2/3} \big( \tilde{\varphi}(|x-y|/(2k+1))^{2/3} + \tilde{\varphi}(|y-z|/(2k+1))^{2/3} \big).
\end{align}
\end{lemma}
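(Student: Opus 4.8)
The plan is to derive all five estimates from the pointwise bounds of Lemma \ref{lem.first.diff} by raising them to the relevant power, expanding the connectivity indicators into unions over paths, and applying the multivariate Mecke equation \eqref{Mecke}. Two elementary facts will be used throughout. First, conditionally on $\eta$ the variable $\deg(x,\Gamma(\eta_x))$ is a sum of independent Bernoulli variables with parameters $\varphi(x-z)$, $z\in\eta$, so by the thinning property of Poisson processes it has a Poisson distribution with mean $\beta m_\varphi$; hence $\BE(\deg(x,\Gamma(\eta_x))+c)^p$ is finite and does not depend on $x$ for every $c\ge0$ and $p\ge1$, and the corresponding joint moments for two distinct points follow by the Cauchy--Schwarz inequality. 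Second, since $\tilde\varphi$ is non-increasing with $\tilde\varphi\le1$, the factorisation $\tilde\varphi=\tilde\varphi^{2/3}\tilde\varphi^{1/3}$ gives, for every $s\ge0$,
\begin{equation*}
\int_{\{w\in\R^d:\,|w|\ge s\}}\tilde\varphi(|w|)\,dw\le\tilde\varphi(s)^{2/3}\int_{\R^d}\tilde\varphi(|w|)^{1/3}\,dw,
\end{equation*}
and the last integral is finite by \eqref{eqn:TildeVarphi}; this is where the exponent $2/3$ originates, and it also shows $m_\varphi<\infty$.

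The core step is a bound on connectivity probabilities. For a point $x$, a measurable set $A$ and $j\in\N$ I would bound $\1\{x\overset{\le j}{\llra}A\text{ in }\Gamma(\eta_x)\}$ by the sum over $i\le j$ and over ordered $i$-tuples $(z_1,\dots,z_i)$ of distinct points of $\eta$ of the indicator that $x\sim z_1\sim\cdots\sim z_i$ and $z_i\in A$; on this event the increments $z_1-x,z_2-z_1,\dots,z_i-z_{i-1}$ sum to a vector of norm at least $d(x,A)$, so at least one of them has norm at least $d(x,A)/j$. Taking expectations, applying the Mecke equation in $z_1,\dots,z_i$, bounding the indicator $z_i\in A$ by one, passing to increment coordinates, using $\varphi(w)\le\tilde\varphi(|w|)$ on the long increment and $\int\varphi=m_\varphi$ on the remaining ones together with the displayed inequality, yields a bound $C\,\tilde\varphi(d(x,A)/j)^{2/3}$ with $C$ depending only on $\beta,\tilde\varphi,j$. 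The same argument applied to $\1\{x\overset{\le j}{\llra}y\text{ in }\Gamma(\eta_{x,y})\}$ produces $C\,\tilde\varphi(|x-y|/j)^{2/3}$.

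To obtain \eqref{eqn:exp.diff.1}--\eqref{eqn:exp.diff.5} I would not separate the degree factor from the connectivity indicators by Hölder's inequality, but instead carry out the path expansion just described \emph{inside} the expectation and then apply Mecke over all path vertices at once. The crucial observation is that after this Mecke shift the edges of the paths involve pairs of points disjoint from the pairs contributing to $\deg(x,\Gamma(\eta_x))$ or $\deg(y,\Gamma(\eta_y))$, and each added path vertex raises such a degree by at most one; hence the (shifted) degree factor is independent of the product of path-edge indicators and contributes only a finite constant, while the product of $\varphi$-factors along the paths, each path carrying a forced long edge, produces the stated product of $\tilde\varphi$-powers. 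For \eqref{eqn:exp.diff.1} there is a single path from $x$ to $W$; for \eqref{eqn:exp.diff.2} one uses $\1\{x\overset{\le k}{\llra}W\}\1\{y\overset{\le k}{\llra}W\}\le\1\{x\overset{\le k}{\llra}W\}+\1\{y\overset{\le k}{\llra}W\}$ and treats the two summands symmetrically; for \eqref{eqn:exp.diff.3}--\eqref{eqn:exp.diff.5} there are two paths, one realising the $x$--$y$ (or $x$--$z$/$y$--$z$) connection and one realising the connection to $W$, and concatenating them shows that a displacement toward $W$ is realised over at most $2k$ (respectively $2k+1$) edges, which is why the arguments of $\tilde\varphi$ in the statement are halved. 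The main obstacle is exactly this bookkeeping: handling overlaps between path vertices and between path vertices and degree-contributing points, preserving the independence of the shifted degree variables from the path-edge indicators, and bounding the number of admissible paths so the constants remain finite; none of this is deep, but it is where essentially all of the work lies.
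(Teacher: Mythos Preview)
Your proposal is correct and matches the paper's proof closely: both start from Lemma \ref{lem.first.diff}, expand the connectivity indicators into sums over paths of length at most $k$ (resp.\ $k+1$), apply the multivariate Mecke equation \eqref{Mecke}, observe that after the Mecke shift the degree variable is independent of the path-edge indicators and contributes only a finite constant, and then extract the factor $\tilde\varphi(\cdot)^{2/3}$ from the forced long edge of each path via the monotonicity of $\tilde\varphi$ and the integrability of $\tilde\varphi^{1/3}$. The only tactical differences are that for \eqref{eqn:exp.diff.2} and \eqref{eqn:exp.diff.4} the paper simply uses $a^2b^2\le a^4+b^4$ to reduce to \eqref{eqn:exp.diff.1} and \eqref{eqn:exp.diff.3} (slightly shorter than your indicator splitting), and for \eqref{eqn:exp.diff.3} it spells out the case analysis on whether the long increment of the $x$--$y$ path lies before or after the junction with the path to $W$, which is precisely the overlap bookkeeping you identify as the main obstacle.
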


\begin{proof}
For $n \in \N$ and $v_1,\ldots,v_n \in \R^d$ we define
$$
\Phi_n(v_1,\ldots,v_n) :=
\prod_{i=1}^{n-1} \tilde\varphi(|v_i-v_{i+1}|),
$$
where the empty product equals one. For $A \subset \R^d$ let
$$
\widetilde{\Phi}_n(v_1,\ldots,v_n;A) := \Phi_n(v_1,\ldots,v_n) \1\{v_n \in A\}.
$$
If $v_1,\ldots,v_n$, $n\in\N$, are vertices of a given graph $H$, we define
$$
\Theta_n(v_1,\ldots,v_n,H) := \prod_{i=1}^{n-1} \1\{v_i \lra v_{i+1} \text{ in } H\}
$$
and
$$
\widetilde{\Theta}_n(v_1,\ldots,v_n,H;A) := \Theta_n(v_1,\ldots,v_n,H)\1\{v_n \in A\}
$$
for $A\subset\R^d$. For $j=0$ we use the conventions
$$\int 1 \, d(w_1,\ldots,w_j) := 1 \qquad \text{and} \qquad \sideset{}{^{\ne}}\sum_{(w_1,\ldots,w_j)\in\eta^j} 1 := 1.$$

In the following let $x,y,z\in\R^d$ and $F:=S_{a,G}(W)$. Without loss of generality we can assume that $|a|_\infty =1$.\\
Lemma $\ref{lem.first.diff}$ implies that
\begin{align*}
\BE (\Delta_xF)^4 &\le \BE \big[(\deg(x,\Gamma(\eta_x))+1)^4 \1\big\{x\overset{\le k}{\llra}W \text{ in } \Gamma(\eta_x)\big\}\big].
\end{align*}
The multivariate Mecke equation \eqref{Mecke} yields that the right-hand side of the previous inequality can be bounded by
\begin{align*}
&\sum_{i=0}^k \beta^i \int \BE (\deg(x,\Gamma(\eta_{x,w_1,\ldots,w_i}))+1)^4 \, \widetilde{\Theta}_{i+1}(x,w_1,\ldots,w_i,\Gamma(\eta_{x,w_1,\ldots,w_i});W) \, d(w_1,\ldots,w_i)\\
&\leq \sum_{i=0}^k \beta^i \int \BE (\deg(x,\Gamma(\eta_x))+i+1)^4 \, \widetilde{\Theta}_{i+1}(x,w_1,\ldots,w_i,\Gamma(\eta_{x,w_1,\ldots,w_i});W) \, d(w_1,\ldots,w_i).
\end{align*}
Since for given $i\in[k]_0$ and $w_1,\ldots,w_i\in\R^d$ the variables $\deg(x,\Gamma(\eta_x))$
and $\widetilde{\Theta}_{i+1}(x,w_1,\ldots,w_i,\Gamma(\eta_{x,w_1,\ldots,w_i});W)$ are independent and $\varphi\leq \tilde{\varphi}$,
we obtain that the right-hand side of the previous inequality can be bounded by
\begin{equation}\label{eq.bound.diff.op.1}
\widetilde C_1 \sum_{i=0}^k \beta^i \int \widetilde{\Phi}_{i+1}(x,w_1,\ldots,w_i;W)\, d(w_1,\ldots,w_i)
\end{equation}
with $\widetilde C_1 := \BE (\deg(x,\Gamma(\eta_x))+k+1)^4$.

For $i \in [k]$ and $w_1,\ldots,w_i\in\R^d$ with $w_i\in W$ we have
$$
\max\{|x-w_1|,|w_1-w_2|,\ldots,|w_{i-1}-w_i|\} \geq d(x,W)/i \ge d(x,W)/k.
$$
Since $\tilde{\varphi}$ is decreasing, this shows
$$
\Phi_{i+1}(x,w_1,\ldots,w_i) = \tilde\varphi(|x - w_1|) \tilde{\varphi}(|w_1-w_2|) \cdots \tilde\varphi(|w_{i-1} - w_i|) \le \tilde\varphi(d(x,W)/k).
$$
Hence, (\ref{eq.bound.diff.op.1}) can be bounded by
$$
\widetilde{C}_1 \tilde\varphi(d(x,W)/k)^{2/3} \sum_{i=0}^k \beta^i \int \Phi_{i+1}(x,w_1,\ldots,w_i)^{1/3}\, d(w_1,\ldots,w_i)
\le C_1 \tilde\varphi(d(x,W)/k)^{2/3}
$$
with a constant $C_1>0$, where we have used the final part of \eqref{eqn:TildeVarphi}.
This proves (\ref{eqn:exp.diff.1}).

Inequality \eqref{eqn:exp.diff.2} can be shown by using the fact that
$$
\BE (\Delta_x F)^2 (\Delta_y F)^2\leq \BE(\Delta_x F)^4 + \BE(\Delta_y F)^4
$$
and by applying inequality \eqref{eqn:exp.diff.1}.

We now turn to the proof of \eqref{eqn:exp.diff.3}. Lemma \ref{lem.first.diff} yields
\begin{align}\label{eq:lem.diff.op.third.1}
\BE \big(\Delta^2_{x,y}F\big)^4 &\le \BE \big[ \left(2\deg(y,\Gamma(\eta_y))+3\right)^4 \1\big\{x\overset{\le k+1}{\llra}y \text{ in } \Gamma(\eta_{x,y})\big\}\nonumber\\
&\quad \times \big( \1\big\{x\overset{\le k}{\llra}W \text{ in } \Gamma(\eta_x)\big\} + \1\big\{y\overset{\le k}{\llra}W \text{ in } \Gamma(\eta_y)\big\} \big) \big].
\end{align}
Considering the event
$$
\big\{x\overset{\le k+1}{\llra}y \text{ in } \Gamma(\eta_{x,y}), \, x\overset{\le k}{\llra}W \text{ in } \Gamma(\eta_x)\big\}
$$
we have to distinguish two cases. Either the path connecting $x$ and $y$ in $\Gamma(\eta_{x,y})$ is disjoint from the path connecting $x$ with $W$ in $\Gamma(\eta_x)$, or the two paths share at least one common vertex except $x$.
This leads to
\begin{align*}
&\1 \big\{x\overset{\le k+1}{\llra}y \text{ in } \Gamma(\eta_{x,y})\big\}\1\big\{x\overset{\le k}{\llra}W \text{ in } \Gamma(\eta_x)\big\}\\
&\le \sum_{i=0}^k \sum_{j=0}^k \; \sideset{}{^{\ne}}\sum_{(v_1,\ldots,v_i,w_1,\ldots,w_j)\in\eta^{i+j}} \Theta_{i+2}(x,v_1,\ldots,v_i,y,\Gamma(\eta_{x,y}))\widetilde{\Theta}_{j+1}(x,w_1,\ldots,w_j,\Gamma(\eta_x);W) \\
&+ \sum_{i=1}^k \sum_{l=1}^i \sum_{j=0}^k \; \sideset{}{^{\ne}}\sum_{(v_1,\ldots,v_i,w_1,\ldots,w_j)\in\eta^{i+j}} \Theta_{i+2}(x,v_1,\ldots,v_i,y,\Gamma(\eta_{x,y}))\widetilde{\Theta}_{j+1}(v_l,w_1,\ldots,w_j,\Gamma(\eta);W).
\end{align*}
Similarly to the proof of the first inequality we have for $i,j\in[k]_0$,
\begin{align}\label{eq:lem.diff.op.third.1.5}
\BE &\sideset{}{^{\ne}}\sum_{(v_1,\ldots,v_i,w_1,\ldots,w_j)\in\eta^{i+j}} (2\deg(y,\Gamma(\eta_y))+3)^4\,
 \Theta_{i+2}(x,v_1,\ldots,v_i,y,\Gamma(\eta_{x,y})\nonumber\\
 &\hspace{8cm}\times \widetilde{\Theta}_{j+1}(x,w_1,\ldots,w_j,\Gamma(\eta_x);W) \nonumber\\
&\leq \beta^{i+j} \int \BE (2\deg(y,\Gamma(\eta_y))+i+j+3)^4\, \Phi_{i+2}(x,v_1,\ldots,v_i,y)\nonumber\\
&\hspace{6cm}\times\widetilde{\Phi}_{j+1}(x,w_1,\ldots,w_j;W)\, d(v_1,\ldots,v_i,w_1,\ldots,w_j)\nonumber\\
&\le \widetilde{C}_3 \tilde{\varphi}(|x-y|/(k+1))^{2/3} \tilde{\varphi}(d(x,W)/k)^{2/3}
\end{align}
with a constant $\widetilde{C}_3>0$. Analogously we have for $i \in[k]$, $l\in[i]$ and $j\in[k]_0$,
\begin{align}\label{eq:lem.diff.op.third.2}
\BE & \sideset{}{^{\ne}}\sum_{(v_1,\ldots,v_i,w_1,\ldots,w_j)\in\eta^{i+j}} (2\deg(y,\Gamma(\eta_y))+3)^4\, \Theta_{i+2}(x,v_1,\ldots,v_i,y,\Gamma(\eta_{x,y}))\nonumber\\
&\hspace{8cm}\times\widetilde{\Theta}_{j+1}(v_l,w_1,\ldots,w_j,\Gamma(\eta);W)\nonumber\\
&\leq \beta^{i+j} \int \BE (2\deg(y,\Gamma(\eta_y))+i+j+3)^4\, \Phi_{i+2}(x,v_1,\ldots,v_i,y)\nonumber\\
&\hspace{5cm}\times\widetilde{\Phi}_{j+1}(v_l,w_1,\ldots,w_j;W) \, d(v_1,\ldots,v_i,w_1,\ldots,w_j).
\end{align}
In case
$$
\max\{|x-v_1|, |v_1-v_2|, \ldots, |v_{l-1}-v_l|\} \ge \max\{|v_l-v_{l+1}|, \ldots, |v_{i-1}-v_i|, |v_i-y|\}
$$
we use the inequalities
\begin{align*}
\Phi_{l+1}(x,v_1,\ldots,v_l) &\leq \tilde\varphi(|x-y|/(k+1)),\\
\Phi_{i-l+2}(v_l,\ldots,v_i,y) \widetilde{\Phi}_{j+1}(v_l,w_1,\ldots,w_j) &\leq \tilde\varphi(d(y,W)/(2k))
\end{align*}
to bound the integrand of (\ref{eq:lem.diff.op.third.2}). Analogously, if
$$
\max\{|x-v_1|, |v_1-v_2|, \ldots, |v_{l-1}-v_l|\} < \max\{|v_l-v_{l+1}|, \ldots, |v_{i-1}-v_i|, |v_i-y|\}
$$
the inequalities
\begin{align*}
\Phi_{i-l+2}(v_l,\ldots,v_i,y) &\leq \tilde\varphi(|x-y|/(k+1)),\\
\Phi_{l+1}(x,v_1,\ldots,v_l) \widetilde{\Phi}_{j+1}(v_l,w_1,\ldots,w_j;W) &\leq \tilde\varphi(d(x,W)/(2k))
\end{align*}
are used.
In summary, similarly to \eqref{eq:lem.diff.op.third.1.5} we obtain that \eqref{eq:lem.diff.op.third.2} can be bounded by
$$
\widehat{C}_3 \tilde\varphi(|x-y|/(k+1))^{2/3}\big( \tilde\varphi(d(x,W)/(2k))^{2/3} + \tilde\varphi(d(y,W)/(2k))^{2/3} \big)
$$
with a constant $\widehat{C}_3>0$. The same arguments hold for
$$
\BE (2\deg(y,\Gamma(\eta_y))+3)^4 \1 \big\{x\overset{\le k+1}{\llra}y \text{ in } \Gamma(\eta_{x,y})\big\}\1\big\{y\overset{\le k}{\llra}W \text{ in } \Gamma(\eta_y)\big\},
$$
so that \eqref{eq:lem.diff.op.third.1} implies \eqref{eqn:exp.diff.3}.

Analogously to \eqref{eqn:exp.diff.2}, we obtain \eqref{eqn:exp.diff.4} by using
$$
\BE \big(\Delta^2_{x,z}F\big)^2\big(\Delta^2_{y,z}F\big)^2 \leq \BE \big(\Delta^2_{x,z}F\big)^4 + \BE \big(\Delta^2_{y,z}F\big)^4
$$
and applying inequality \eqref{eqn:exp.diff.3}. The inequality \eqref{eqn:exp.diff.5} can be proven with similar arguments as \eqref{eqn:exp.diff.3}. This is left to the reader.
\end{proof}

\begin{lemma}\label{lem.min}
Let $\alpha>0$ and let ${\chi}: [0,\infty)\to[0,1]$ be a monotonously decreasing function such that $\int_{\R^d} {\chi}(|x|)^{\alpha}\, dx<\infty$.
Then there exists a monotonously decreasing function $h:(0,\infty)\to[0,\infty)$
with $h(t)\to 0$ as $t\to\infty$, such that
\begin{equation*}
\frac{1}{\lambda_d(W)} \int{\chi}(d(x,W))^\alpha \,dx \le 1 + h(r(W)), \quad W\in\mathcal{K}^d.
\end{equation*}
\end{lemma}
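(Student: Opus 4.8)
The plan is to split the integral $\int \chi(d(x,W))^\alpha\,dx$ into the contribution from $W$ itself and the contribution from the complement of $W$. For $x\in W$ we have $d(x,W)=0$ and $\chi(0)^\alpha\le 1$, so this part contributes at most $\lambda_d(W)$, which after dividing by $\lambda_d(W)$ gives the leading $1$. The remaining task is to control
$$
\frac{1}{\lambda_d(W)} \int_{W^c} \chi(d(x,W))^\alpha\,dx
$$
and show it is bounded by some $h(r(W))$ with $h$ decreasing to $0$. The natural idea is to use a layer-cake/coarea decomposition of $W^c$ according to the value $d(x,W)=t$: the outer parallel set $W\oplus B^d(0,t)$ has volume controlled by the Steiner formula, and $\lambda_d(\{x: d(x,W)\le t\}) - \lambda_d(W) = \sum_{i=0}^{d-1}\kappa_{d-i}t^{d-i}V_i(W)$ in terms of intrinsic volumes. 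Dividing by $\lambda_d(W)$ and using the standard estimate $V_i(W)/\lambda_d(W) \le c_d/r(W)$ (as in the commented-out passage of the proof of Theorem \ref{thm:AsymptoticVariance}), the volume of each annulus $\{t<d(x,W)\le t+dt\}$, normalized by $\lambda_d(W)$, is at most a polynomial in $t$ times $1/r(W)$.

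Concretely, I would write, with $g(t):=\lambda_d(\{x\in W^c : d(x,W)\le t\})$,
$$
\int_{W^c}\chi(d(x,W))^\alpha\,dx = \int_0^\infty \chi(t)^\alpha\,dg(t) \le \chi\text{-dependent bound},
$$
and then bound $g(t)\le \sum_{i=0}^{d-1}\kappa_{d-i}t^{d-i}V_i(W)$. Using $V_i(W)\le c_d\,\lambda_d(W)/r(W)$ for $i\le d-1$ (valid for all $W\in\mathcal{K}^d$, since $r(W)B^d(0,1)$ fits inside a translate of $W$ and monotonicity of intrinsic volumes gives $V_i(W)\ge V_i(r(W)B^d) = \kappa_i r(W)^i$, combined with the isoperimetric-type inequalities $V_i(W)^{d}\le c\, V_d(W)^i V_0(W)^{d-i}$ — more simply one invokes \cite[Lemma 3.7]{HugLastSchulte2016}), one gets
$$
\frac{1}{\lambda_d(W)}\int_{W^c}\chi(d(x,W))^\alpha\,dx \le \frac{c_d}{r(W)}\sum_{i=0}^{d-1}\kappa_{d-i}\int_0^\infty (d-i)\,t^{d-i-1}\chi(t)^\alpha\,dt.
$$
The finiteness of $\int_{\R^d}\chi(|x|)^\alpha\,dx = d\kappa_d\int_0^\infty t^{d-1}\chi(t)^\alpha\,dt$ does not immediately give finiteness of $\int_0^\infty t^{d-i-1}\chi(t)^\alpha\,dt$ for small exponents $d-i-1$, but near $t=0$ the integrand $t^{d-i-1}\chi(t)^\alpha\le t^{d-i-1}$ is integrable, and for large $t$ we have $t^{d-i-1}\le t^{d-1}$, so each of these integrals is finite. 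Hence the whole expression is bounded by $\tilde c/r(W)$ for a constant $\tilde c$ depending only on $d,\alpha,\chi$, and we may take $h(s):=\tilde c/s$ (or, to get the stated $h(t)\to 0$ with $h$ decreasing, $h(s):=\min\{\tilde c/s, \text{something}\}$, but $\tilde c/s$ already works).

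The main subtlety — and the step I'd be most careful about — is the uniform bound $V_i(W)\le c_d\,\lambda_d(W)/r(W)$ for $i=0,\dots,d-1$ across \emph{all} convex bodies, including very flat or very elongated ones; one must make sure this is genuinely uniform (it follows from $\lambda_d(W)\ge \kappa_d r(W)^d$ together with the monotonicity and polynomial-degree structure of intrinsic volumes, cf.\ \cite[(3.19)]{HugLastSchulte2016} and \cite[Lemma 3.7]{HugLastSchulte2016}, exactly as in the commented-out argument for the covariogram estimate). A minor alternative that avoids intrinsic volumes altogether: bound $g(t)\le \lambda_d(\partial W \oplus B^d(0,t))$ and, since $\partial W\subset W\oplus B^d(0,t)\setminus \mathrm{int}(W\ominus B^d(0,t))$, estimate the outer layer by $\lambda_{d-1}$-surface-area times $t$ plus lower order — but the intrinsic-volume route is cleanest. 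Once the $1/r(W)$ bound is in hand, the rest is routine and the lemma follows.
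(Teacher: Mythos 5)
Your proposal is correct and follows essentially the same route as the paper's proof: the Steiner formula for the outer parallel volume, the intrinsic-volume bound $V_j(W)/\lambda_d(W)\le c_d/r(W)^{d-j}$ from \cite[Lemma 3.7]{HugLastSchulte2016}, and the same splitting of $\int_0^\infty t^{d-j-1}\chi(t)^\alpha\,dt$ near $0$ and at infinity to get finiteness from $\int_{\R^d}\chi(|x|)^\alpha\,dx<\infty$. The only (cosmetic) caveat is that your simplified uniform bound $V_j(W)\le c_d\,\lambda_d(W)/r(W)$ is not valid for $r(W)<1$ and $j<d-1$; keeping the full powers $r(W)^{-(d-j)}$ as the paper does and setting $h(s):=c\sum_{j=0}^{d-1}s^{-(d-j)}$ yields a decreasing $h$ with $h(t)\to0$ that works for all $W\in\mathcal{K}^d$.
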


\begin{proof}
For $W\in\mathcal{K}^d$ the local Steiner formula in \cite[Theorem 4.2.8]{Schneider13} yields
\begin{align*}
\int \chi (d(x,W))^\alpha  d x & \leq \lambda_d (W) + \int_{\R^d \setminus W} \chi (d(x,W))^\alpha  d x \\
&= \lambda_d (W) +  \sum_{j=0}^{d-1} (d-j) \kappa_{d-j} V_j (W) \int_0^\infty t^{d-j-1} \chi (t)^\alpha  d t,
\end{align*}
where $V_0, \dotsc, V_{d-1}$ denote the intrinsic volumes and $\kappa_j$ stands for the volume of the $j$-dimensional unit ball, $j\in[d]_0$.
We have that
\begin{align*}
S(W)&:=\sum_{j=0}^{d-1} (d-j) \kappa_{d-j} V_j (W) \int_0^\infty t^{d-j-1} \chi (t)^\alpha  d t \\
&\le \sum_{j=0}^{d-1} (d-j) \kappa_{d-j} V_j (W) \bigg( \chi (0)^\alpha + \int_1^\infty t^{d-1} \chi(t)^\alpha  d t \bigg) \\
&\le \sum_{j=0}^{d-1} (d-j) \kappa_{d-j} V_j (W) \bigg( 1 + \frac{1}{d \kappa_d} \int_{\R^d} \chi (|x|)^\alpha  d x \bigg).
\end{align*}
Now \cite[Lemma 3.7]{HugLastSchulte2016} yields for $j\in[d-1]_0$,
$$
\frac{V_j(W)}{\lambda_d(W)} \le \frac{2^d-1}{\kappa_{d-j} r(W)^{d-j}}.
$$
Hence, we obtain
\begin{equation*}
\frac{S(W)}{\lambda_d(W)} \le \sum_{j=0}^{d-1} \frac{(d-j)(2^d-1)}{r(W)^{d-j}} \bigg( 1 + \frac{1}{d \kappa_d} \int_{\R^d} \chi (| x|)^\alpha  d x \bigg) \longrightarrow 0 \quad \text{as} \quad r(W)\to\infty.
\end{equation*}
This finishes the proof of the lemma.
\end{proof}

\begin{proof}[Proof of Theorem \ref{thm:QuantitativeGeneral} and Theorem \ref{thm:CLTWasserstein}]
Our aim is to apply Theorem \ref{thm:General.CLT}.
Let $k:=\max\{|G_1|,\ldots,|G_m|\}$.

By Corollary \ref{cor:lowerBoundVar} there are constants $\tau, c >0$ such that
\begin{equation}\label{eqn:var.low.bound}
\V S_{a,G}(W) \geq c \lambda_d(W)
\end{equation}
for all $W\in\mathcal{K}^d$ with $r(W)\geq\tau$.

It follows from \eqref{eqn:TildeVarphi} and Lemma \ref{lem.min} that there exists a constant $\tilde{c}>0$ such that
\begin{equation}\label{eqn:bound.int.con.3root}
\int \tilde\varphi(d(x,W)/l)^{1/3} \,dx \le \tilde{c} \lambda_d(W)
\end{equation}
for all $W\in\mathcal{K}^d$ with $r(W)\ge\tau$ and $l\in\{k,\ldots,2k+1\}$.

From now on let $W\in\mathcal{K}^d$ with $r(W) \geq \tau$ and define $\widetilde{F} := S_{a,G}(W)$ and
$$
F:= \frac{S_{a,G}(W) - \BE S_{a,G}(W)}{\sqrt{\V S_{a,G}(W)}}.
$$
Let the quantities $\gamma_1,\ldots,\gamma_6$ be defined as in Section \ref{sec:normal.approx} with respect to $F$. It follows from the obvious inequality $|\widetilde{F}| \le |a|_\infty \eta(W)$ that all moments of $\widetilde{F}$ and $F$ exist and, in particular, $\BE F^4<\infty$.

Let $C_1,\ldots,C_5$ be the constants from Lemma \ref{lem.diff.bound}. The Cauchy-Schwarz inequality, \eqref{eqn:exp.diff.1} and \eqref{eqn:exp.diff.3} yield
\begin{align}\label{eqn:proof.quantitative.clt.1}
&\int \Big[\BE \big(\Delta_{x_1}\widetilde{F}\big)^4\Big]^{1/4} \Big[\BE \big(\Delta_{x_2}\widetilde{F}\big)^4\Big]^{1/4}
\Big[\BE \big(\Delta_{x_1,x_3}^2\widetilde{F}\big)^4\Big]^{1/4}\Big[\BE  \big(\Delta_{x_2,x_3}^2\widetilde{F}\big)^4\Big]^{1/4}\, d(x_1,x_2,x_3)\nonumber\\
&= \int \biggl[ \int \Big[\BE\big(\Delta_y \widetilde{F}\big)^4\Big]^{1/4} \Big[\BE\big(\Delta_{y,x}^2\widetilde{F}\big)^4\Big]^{1/4} dy\biggr]^2dx\nonumber\\
&\le \iint \Big[\BE \big(\Delta_{z}\widetilde{F}\big)^4\Big]^{1/2} dz \int \Big[\BE\big(\Delta_{y,x}^2 \widetilde{F}\big)^4\Big]^{1/2} dy \, dx\nonumber\\
&\le \sqrt{C_1C_3} \int \tilde{\varphi}(d(z,W)/k)^{1/3} dz\nonumber\\
&\quad \times \int \tilde{\varphi}(|x-y|/(k+1))^{1/3}\big[ \tilde{\varphi}(d(x,W)/(2k))^{2/3} + \tilde{\varphi}(d(y,W)/(2k))^{2/3} \big]^{1/2} d(x,y).
\end{align}
We apply the inequality $\sqrt{b_1+b_2}\leq \sqrt{b_1}+\sqrt{b_2}$, $b_1,b_2\geq 0$, and Lemma \ref{lem.min} together with \eqref{eqn:TildeVarphi} and obtain that the right-hand side of \eqref{eqn:proof.quantitative.clt.1} is finite.

Inequality (\ref{eqn:exp.diff.3}) yields
\begin{align*}
&\int \Big[\BE \big(\Delta_{x_1,x_3}^2\widetilde{F}\big)^4\Big]^{1/2} \Big[\BE\big(\Delta_{x_2,x_3}^2\widetilde{F}\big)^4\Big]^{1/2} \,d(x_1,x_2,x_3)\\
&\le C_3 \int \tilde{\varphi}(|x_1-x_3|/(k+1))^{1/3}\big[\tilde{\varphi}(d(x_1,W)/(2k))^{2/3} + \tilde{\varphi}(d(x_3,W)/(2k))^{2/3} \big]^{1/2}\\
&\qquad\times \tilde{\varphi}(|x_2-x_3|/(k+1))^{1/3}\big[\tilde{\varphi}(d(x_2,W)/(2k))^{2/3} + \tilde{\varphi}(d(x_3,W)/(2k))^{2/3} \big]^{1/2} \,d(x_1,x_2, x_3).
\end{align*}
Analogously to \eqref{eqn:proof.quantitative.clt.1}, the above right-hand side is finite and, hence, conditions \eqref{eqn:Integrability1} and \eqref{eqn:Integrability2} are fulfilled for $F$.

By (\ref{eqn:exp.diff.2}), (\ref{eqn:exp.diff.5}), assumption (\ref{eqn:TildeVarphi}) and (\ref{eqn:bound.int.con.3root}) we have
\begin{align*}
&\int \Big[\BE \big(\Delta_{x_1}\widetilde{F}\big)^2 \big(\Delta_{x_2}\widetilde{F}\big)^2\Big]^{1/2}
\Big[\BE \big(\Delta_{x_1,x_3}^2\widetilde{F}\big)^2\big(\Delta_{x_2,x_3}^2\widetilde{F}\big)^2\Big]^{1/2}\,d(x_1,x_2,x_3)\\
&\le \sqrt{C_2C_5} \int \big[ \tilde{\varphi}(d(x_1,W)/k)^{2/3} + \tilde{\varphi}(d(x_2,W)/k)^{2/3} \big]^{1/2}\\
&\quad \times \big[ \tilde{\varphi}(|x_1-x_2|/(2k+1))^{2/3} \tilde{\varphi}(|x_1-x_3|/(2k+1))^{2/3}\\
&\qquad + \tilde{\varphi}(|x_1-x_3|/(2k+1))^{2/3} \tilde{\varphi}(|x_2-x_3|/(2k+1))^{2/3} \big]^{1/2} \, d(x_1,x_2,x_3)\\
&\le c_1 \sqrt{C_2C_5} \int \tilde{\varphi}(d(x,W)/k)^{1/3} \,dx\\
&\le \tilde{c} c_1 \sqrt{C_2C_5} \lambda_d(W)
\end{align*}
with a constant $c_1>0$.
Combining this bound with (\ref{eqn:var.low.bound}), we see that $\gamma_1$ is bounded by some multiple of $\lambda_d(W)^{-1/2}$ for all $W\in\mathcal{K}^d$ with $r(W)\ge\tau$. Analogously, $\gamma_2,\ldots,\gamma_6$ can be treated, where Lemma \ref{lem:4.moment} can be used to bound $\BE F^4$.
This is left to the reader. Finally, the application of Theorem \ref{thm:General.CLT} concludes the proof.
\end{proof}

\section{Total number of components}

As in Section \ref{sasymptoticcov} we let $\eta$ be a stationary Poisson process of intensity $\beta>0$ in $\R^d$ and $\varphi:\R^d\to[0,1]$ be a connection function satisfying \eqref{e5.1}.
So far we have investigated the numbers of components isomorphic to given finite connected graphs. In this section we study the total number of finite components.
For technical reasons we do not count all components with lexicographic minimum in an observation window $W\in\mathcal{K}^d$, but only those whose vertices are all in $W$.
So we define $\bar{\eta}(W)$ as the  number of finite components of $\Gamma(\eta)$ such that all vertices belong to $W$. A strong law of large numbers for $\bar{\eta}(W)$ and related statistics is derived in \cite[Theorem 2]{Penrose91} in case of rectangular observation windows.
The following result is a slightly more general version of Theorem \ref{t1.3}. Recall that $N$ denotes a standard Gaussian random variable.

\begin{theorem}\label{thm:CLTTotal}
Assume that \eqref{e5.1} is satisfied. Then the limit
\begin{equation}\label{eqn:DefinitionLimitVarianceTotalNumber}
\sigma_{\varphi,\varphi} := \lim_{r(W)\to\infty} \frac{\V \bar{\eta}(W)}{\lambda_d(W)}
\end{equation}
exists, is in $(0,\infty)$ and is given by $\sigma_{\varphi,\varphi}=\lim_{m\to\infty} \sum_{i,j=1}^m \sigma_{\varphi,\varphi}^{(i,j)}$.
For $r(W)\to\infty$,
$$
\frac{\bar{\eta}(W)-\BE \bar{\eta}(W)}{\sqrt{\V \bar{\eta}(W)}}\overset{d}{\longrightarrow} N.
$$
\end{theorem}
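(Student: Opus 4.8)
The plan is to combine a truncation in the component order with the general normal‑approximation machinery of Sections \ref{secdiffop}--\ref{sec:normal.approx} and the results of Sections \ref{sasymptoticcov}--\ref{seccounts}. For $k\in\N$ let $\bar\eta_k(W)$ denote the number of components of $\Gamma(\eta)$ with exactly $k$ vertices, all lying in $W$, so $\bar\eta(W)=\sum_{k\ge1}\bar\eta_k(W)$ almost surely (the sum is finite since $\bar\eta(W)\le\eta(W)$). For $m\in\N$ set $\bar\eta^{\le m}(W):=\sum_{k=1}^m\bar\eta_k(W)$, $T_m(W):=\bar\eta(W)-\bar\eta^{\le m}(W)$, and $\widetilde S_m(W):=\sum_{k=1}^m\eta_k(W)$. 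Because $\eta_k(W)=\sum_{G\in\bG_{k,\varphi}}\eta_G(W)$, the functional $\widetilde S_m(W)$ equals $S_{a,G^{(m)}}(W)$ with $G^{(m)}\in\mathbf G_\varphi^{\ell_m,\ne}$ the tuple of all graphs in $\bG_\varphi$ of order at most $m$ and $a=(1,\dots,1)$; hence, by Theorem \ref{thm:VarianceS} and Corollary \ref{thm:AsymptoticVariance2}, $v_m:=\lim_{r(W)\to\infty}\V\widetilde S_m(W)/\lambda_d(W)=\sum_{i,j\le m}\sigma^{(i,j)}_{\varphi,\varphi}>0$, and by Theorem \ref{thm:CLTGeneral} one has $(\widetilde S_m(W)-\BE\widetilde S_m(W))/\sqrt{\lambda_d(W)}\overset{d}{\longrightarrow}\sqrt{v_m}\,N$, even in $d_1$. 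Writing $R_m(W):=\widetilde S_m(W)-\bar\eta^{\le m}(W)$, the number of components of order at most $m$ with lexicographic minimum in $W$ but with a vertex outside $W$, we get the decomposition $\bar\eta(W)=\widetilde S_m(W)-R_m(W)+T_m(W)$. I would then establish: (ii) $\sup_{W\in\mathcal K^d}\V T_m(W)/\lambda_d(W)=:\epsilon_m\to0$ as $m\to\infty$; and (iii) $\V R_m(W)=o(\lambda_d(W))$ as $r(W)\to\infty$ for each fixed $m$. Together with the facts just recalled this yields the theorem by a limit exchange.

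For (ii) I would use the Poincar\'e inequality (Theorem \ref{tpoincare}). Adding $x$ to $\eta$ changes $T_m(W)$ only through the component of $x$ or through components of $\Gamma(\eta)$ touched by an edge from $x$, which gives $|\Delta_x T_m(W)|\le\1\{x\in W\}\1\{|C_x|>m,\ C_x\text{ finite}\}+D_x$, where $C_x$ is the component of $x$ in $\Gamma(\eta_x)$ and $D_x:=\#\{v\in\eta:v\text{ connected to }x\text{ in }\Gamma(\eta_x),\ C_v\subset W,\ |C_v|>m,\ C_v\text{ finite}\}$, $C_v$ being the component of $v$ in $\Gamma(\eta)$. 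By Slivnyak's theorem $\BE\,\1\{|C_x|>m,\text{ finite}\}=q_m:=\BP(|C_0^{\Gamma(\eta\cup\{0\})}|>m,\text{ finite})$, and $q_m\to0$ since the probabilities of the finite component sizes sum to at most one. As $D_x\le\deg(x,\Gamma(\eta_x))$ we have $D_x^2\le D_x\deg(x,\Gamma(\eta_x))$; expanding $D_x$ by the Mecke equation \eqref{Mecke}, using that the mark of the edge $\{x,v\}$ is independent of everything else given the points, that $\deg(x,\Gamma(\eta_x))$ is Poisson with mean $\beta m_\varphi$, and applying the Cauchy--Schwarz inequality to the conditional (given the points) factorisation, one obtains $\BE[D_x\deg(x,\Gamma(\eta_x))]\le C\sqrt{q_m}\,(\varphi*\1_W)(x)$ with $C$ depending only on $\beta$ and $m_\varphi$. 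Since $\int_{\R^d}(\varphi*\1_W)=m_\varphi\lambda_d(W)$, the Poincar\'e inequality gives $\V T_m(W)\le C'(q_m+\sqrt{q_m})\lambda_d(W)$ for all $W$, proving (ii). The point of splitting off a degree factor is that the estimate then stays \emph{linear} in $\varphi*\1_W$, so it integrates over all of $\R^d$ even when $\varphi$ has unbounded support.

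For (iii) the Poincar\'e inequality is too lossy — for heavy‑tailed $\varphi$ its right‑hand side for $R_m(W)$ can even be infinite, because $\Delta_x R_m(W)$ need not vanish for $x$ far from $W$ — so I would instead compute $\V R_m(W)=\BE R_m(W)+\bigl(\BE[R_m(W)(R_m(W)-1)]-(\BE R_m(W))^2\bigr)$ directly. A first‑moment bound in the spirit of Proposition \ref{p1.2}, together with the elementary fact that the fraction of $W$ lying within a fixed distance of $\partial W$ tends to $0$ as $r(W)\to\infty$ (cf.\ the Steiner‑formula argument in Lemma \ref{lem.min}), shows $\BE R_m(W)=o(\lambda_d(W))$. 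For the bracket I would use the two‑component covariance formula of Proposition \ref{p3.7}, now with the extra indicators that both components straddle $\partial W$: the resulting integrand is a $q$‑function of the type treated in the proof of Theorem \ref{thm:AsymptoticVariance}, which by the estimates there is non‑negligible only when the two (small, straddling) components are spatially close; since small straddling components concentrate near $\partial W$, the double integral is of surface order and hence $o(\lambda_d(W))$.

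Finally I would combine everything. From $\bar\eta(W)=\widetilde S_m(W)-R_m(W)+T_m(W)$, (ii), (iii) and the Cauchy--Schwarz inequality for covariances, $\limsup_{r(W)\to\infty}\bigl|\V\bar\eta(W)/\lambda_d(W)-v_m\bigr|\le\epsilon_m+2\sqrt{v_m\epsilon_m}=:\rho_m$; as $\rho_m\to0$, the sequence $(v_m)$ is Cauchy, so $v_m\to\sigma_{\varphi,\varphi}$ for some $\sigma_{\varphi,\varphi}\in[0,\infty)$ which equals $\lim_{m\to\infty}\sum_{i,j\le m}\sigma^{(i,j)}_{\varphi,\varphi}$, and $\V\bar\eta(W)/\lambda_d(W)\to\sigma_{\varphi,\varphi}$. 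Positivity of $\sigma_{\varphi,\varphi}$ follows from a lower variance bound proved exactly as Theorem \ref{thm:VarianceS} via the variance representation of Theorem \ref{thm:VarianceRepresentation}: adding a point $x\in W$ at a birth time $t$ close to $1$ contributes, on the event that $x$ is isolated in the graph built from $\hat\eta_t\cup\{(x,t,M)\}$, a conditional expected change of $\bar\eta(W)$ of at least $e^{-\beta(1-t)m_\varphi}-\beta(1-t)m_\varphi$, whence $\V\bar\eta(W)\ge c\,\lambda_d(W)$ for a constant $c>0$ and all $W\in\mathcal K^d$. For the central limit theorem, the same decomposition and (ii), (iii) give, for every $1$‑Lipschitz bounded $h$, $\limsup_{r(W)\to\infty}\bigl|\BE h\bigl((\bar\eta(W)-\BE\bar\eta(W))/\sqrt{\lambda_d(W)}\bigr)-\BE h(\sqrt{v_m}\,N)\bigr|\le\sqrt{\epsilon_m}$, and $\BE h(\sqrt{v_m}\,N)\to\BE h(\sqrt{\sigma_{\varphi,\varphi}}\,N)$; letting $m\to\infty$ shows $(\bar\eta(W)-\BE\bar\eta(W))/\sqrt{\lambda_d(W)}\overset{d}{\longrightarrow}\sqrt{\sigma_{\varphi,\varphi}}\,N$, and dividing by $\sqrt{\V\bar\eta(W)/\lambda_d(W)}\to\sqrt{\sigma_{\varphi,\varphi}}>0$ (Slutsky) gives the claim. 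The hard parts will be (ii) and (iii): controlling the contribution of large components \emph{uniformly} in $W$ and showing that the boundary remainder $R_m(W)$ is of lower order, both delicate precisely because $\varphi$ may have unbounded support, so the difference operators are not localised.
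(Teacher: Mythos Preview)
Your plan matches the paper's almost exactly: truncate in the component order, use Theorems~\ref{thm:VarianceS} and~\ref{thm:CLTGeneral} for the truncated sum, bound the tail $T_m(W)$ via the Poincar\'e inequality (the paper's Lemma~\ref{lem:TempBoundsVar}), kill the boundary remainder $R_m(W)$, obtain $\sigma_{\varphi,\varphi}$ as the limit of the Cauchy sequence $(v_m)_m$, prove positivity via Theorem~\ref{thm:VarianceRepresentation}, and finish with a three-term splitting using $1$-Lipschitz test functions.

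Two points of comparison. First, your claim that the Poincar\'e inequality is ``too lossy'' for $R_m(W)$ and can even give an infinite right-hand side is incorrect: the paper treats exactly this boundary term via Poincar\'e (Lemma~\ref{lem:BoundaryEffects}). One splits $\int\BE(\Delta_x R_m(W))^2\,dx$ into $x\in W$ and $x\in W^c$; on each piece $|\Delta_x R_m(W)|$ is dominated by a degree factor times an indicator that $x$ is within graph distance $k$ of $\partial W$, and after the Mecke equation both pieces are bounded by a constant times $\int_W\int_{W^c}\varphi(x-y)\,dy\,dx$, which is finite and $o(\lambda_d(W))$ under~\eqref{e5.1} alone, even for heavy-tailed $\varphi$. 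Your proposed direct second-moment route via Proposition~\ref{p3.7} would also succeed but is longer and unnecessary. Second, for positivity the paper (Lemma~\ref{lem:LowerBoundVarianceBall}) conditions on the event that the added point $x$ has exactly two neighbours at time $t$, each of degree one; removing $x$ then strictly increases the component count with high conditional probability, giving an unambiguous sign. Your isolated-vertex event with the balance $e^{-\beta(1-t)m_\varphi}-\beta(1-t)m_\varphi$ is also valid (the later degree $D$ of $x$ is independent of $\hat\eta_t$, and the difference is pointwise at least $\mathbf 1\{D=0\}-D$), and is arguably the simpler choice.
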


For the special case of random geometric graphs a similar result as Theorem \ref{thm:CLTTotal} is shown in \cite[Theorem 13.27]{Penrose03}.

For the following lemmas preparing the proof of Theorem \ref{thm:CLTTotal} and the proof itself we can assume without loss of generality that the intensity $\beta$ equals $1$. For $G\in\mathbf{G}$ and $W\in\mathcal{K}^d$ let $\tilde{\eta}_{G}(W)$ be the number of components of $\Gamma(\eta)$ that are isomorphic to $G$ and have only vertices in $W$. Similarly, let $\tilde{\eta}_{k}(W)$, $k\in\N$, be the number of $k$-components of $\Gamma(\eta)$ such that all vertices are in $W$. Note that in \cite{BrugMeester04} $k$-components are counted this way and not as in the previous sections via their lexicographic minima. However, the next lemma and the following corollary show that both ways of counting components that have a given number of vertices or are isomorphic to a given graph are asymptotically equivalent.

\begin{lemma}\label{lem:BoundaryEffects}
Let \eqref{e5.1} be satisfied and let $G\in\mathbf{G}$. Then
$$
\lim_{r(W)\to\infty} \frac{\V (\tilde{\eta}_{G}(W)-\eta_{G}(W))}{\lambda_d(W)}=0.
$$
\end{lemma}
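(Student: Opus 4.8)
The key observation is that every component all of whose vertices lie in $W$ in particular has its lexicographic minimum in $W$, so $\tilde\eta_G(W)\le\eta_G(W)$ and $D_W:=\eta_G(W)-\tilde\eta_G(W)\ge0$ is the number of components of $\Gamma(\eta)$ isomorphic to $G$ whose lexicographic minimum lies in $W$ but which have at least one vertex outside $W$; call such a component \emph{boundary-crossing}. Since $\tilde\eta_G(W)-\eta_G(W)=-D_W$, the assertion is equivalent to $\V D_W=o(\lambda_d(W))$ as $r(W)\to\infty$. Because $0\le D_W\le\eta(W)$, we have $D_W\in L_\xi$ with $\BE D_W^2<\infty$, so the Poincar\'e inequality (Theorem \ref{tpoincare}) gives, with $\beta=1$ as throughout this section and $k:=|G|$,
$$
\V D_W\le\int_{\R^d}\BE\big[(\Delta_xD_W)^2\big]\,dx .
$$
The plan is to show the right-hand side is $o(\lambda_d(W))$. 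Adding a vertex $x$ only alters the component of $\Gamma(\eta_x)$ through $x$, so $\Delta_xD_W=\1_{B(x)}-\#\{\text{boundary-crossing copies of }G\text{ in }\Gamma(\eta)\text{ met by an edge at }x\}$, where $B(x)$ is the event that the component of $x$ in $\Gamma(\eta_x)$ is a boundary-crossing copy of $G$. In particular $\Delta_xD_W=0$ off $A(x)\cup B(x)$, where $A(x)$ is the event that $x$ is joined by an edge to some boundary-crossing copy of $G$ in $\Gamma(\eta)$; on $B(x)$ the component of $x$ has only $k$ vertices, so $|\Delta_xD_W|\le k$, while on $A(x)$, $|\Delta_xD_W|\le\deg(x,\Gamma(\eta_x))$. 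Hence $(\Delta_xD_W)^2\le2k^2\1_{B(x)}+2\deg(x,\Gamma(\eta_x))^2\1_{A(x)}$.

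For the $A(x)$-contribution I would bound $\1_{A(x)}\le\sum_{C}\1\{x\text{ joined to }C\}$, the sum over boundary-crossing copies $C$ of $G$, and apply the multivariate Mecke equation \eqref{Mecke} to the vertices $u_1\prec\cdots\prec u_k$ of $C$ (so $u_1\in W$ and $u_j\notin W$ for some $j\ge2$). Splitting $\deg(x,\Gamma(\eta_x))$ into its part within $C$ (at most $k$) and its part towards the remaining Poisson points — which, after the Mecke step, is an independent $\mathrm{Poisson}(m_\varphi)$ variable — and bounding, conditionally on the configuration, the edges joining $x$ to $C$ by the factor $\le\sum_{i=1}^k\varphi(x-u_i)$, the event that $\{u_1,\dots,u_k\}$ spans a copy of $G$ by $\le\sum_{I\in\mathcal I_k}\prod_{(i,j)\in I}\varphi(u_i-u_j)$ (the spanning-tree bound from the proof of Theorem \ref{thm:AsymptoticVariance}, with $\mathcal I_k$ as there), and all other factors by $1$, one obtains a constant $\widehat C=\widehat C(k,m_\varphi)$ with
$$
\BE\big[(\Delta_xD_W)^2\1_{A(x)}\big]\le\widehat C\int\1\{u_1\in W,\ u_1\prec\cdots\prec u_k,\ \exists j\ge2:u_j\notin W\}\Big(\sum_{i=1}^k\varphi(x-u_i)\Big)\Big(\sum_{I\in\mathcal I_k}\prod_{(i,j)\in I}\varphi(u_i-u_j)\Big)\,d(u_1,\dots,u_k).
$$
The crucial move is then to integrate $x$ out \emph{first}, using $\int_{\R^d}\sum_{i=1}^k\varphi(x-u_i)\,dx=k\,m_\varphi<\infty$, which needs nothing beyond $m_\varphi<\infty$. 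In what remains, fix a spanning tree $I$ and an index $j$, root $I$ at $1$, and let $\ell\le k-1$ be the length of the unique $1$–$j$ path in $I$; integrating out the off-path vertices (a factor $m_\varphi$ each) leaves $\int_W G_\ell\big(d(u_1,\partial W)\big)\,du_1$, where $G_\ell(s):=\int\1\{|z_1|+\cdots+|z_\ell|\ge s\}\,\varphi(z_1)\cdots\varphi(z_\ell)\,d(z_1,\dots,z_\ell)$, because on the event above $u_1\in W$ is joined in the tree to $u_j\notin W$ along the $1$–$j$ path, so the sum of those edge lengths is at least $d(u_1,\partial W)$.

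It remains to see $\int_W G_\ell\big(d(u_1,\partial W)\big)\,du_1=o(\lambda_d(W))$. The function $G_\ell$ is bounded by $m_\varphi^\ell$, non-increasing, and $G_\ell(s)\to0$ as $s\to\infty$ by dominated convergence (the indicator vanishes pointwise while $\varphi(z_1)\cdots\varphi(z_\ell)$ is integrable with integral $m_\varphi^\ell$). Splitting $W$ into $\{u\in W:d(u,\partial W)>R\}$, on which $G_\ell\le G_\ell(R)$, and the inner collar $\{u\in W:d(u,\partial W)\le R\}$, whose volume is $O\big(R\,V_{d-1}(W)\big)=o(\lambda_d(W))$ by \cite[Lemma 3.7]{HugLastSchulte2016}, and letting first $r(W)\to\infty$ and then $R\to\infty$ yields $\limsup_{r(W)\to\infty}\lambda_d(W)^{-1}\int_W G_\ell(d(u_1,\partial W))\,du_1=0$. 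Summing over the finitely many trees $I$ and indices $j$ gives $\int_{\R^d}\BE[(\Delta_xD_W)^2\1_{A(x)}]\,dx=o(\lambda_d(W))$. The $B(x)$-term is handled the same way and more easily, as $|\Delta_xD_W|\1_{B(x)}\le k$: applying \eqref{Mecke} to the $k-1$ $\eta$-vertices of $x$'s component and integrating $x$ (against plain Lebesgue measure, $x$ being itself a vertex) leads to the same integral $\int\1\{v_1\in W,\ v_1\prec\cdots\prec v_k,\ \exists q\ge2:v_q\notin W\}\sum_{I\in\mathcal I_k}\prod_{(i,j)\in I}\varphi(v_i-v_j)\,d(v_1,\dots,v_k)$, which is $o(\lambda_d(W))$ by the estimate just given. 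Combining the two contributions with the Poincar\'e inequality finishes the proof.

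The step I expect to be the genuine obstacle is controlling the contribution of $x$ far from $W$. The crude implication ``$\Delta_xD_W\ne0\Rightarrow x$ is within $k+1$ connection steps of $W$'' would only yield a bound of the form $\int_{W^c}\widetilde G\big(d(x,W)\big)\,dx$ with $\widetilde G$ bounded and vanishing at infinity but, under the sole hypothesis $m_\varphi<\infty$, \emph{not} necessarily integrable against $t^{d-1}\,dt$ (that would need a moment condition like $\int|z|^d\varphi(z)\,dz<\infty$, which is not assumed). The remedy above is never to integrate $x$ against a ``distance-to-$W$'' kernel but only against $\varphi(x-\cdot)$ itself, whose total mass is the finite number $m_\varphi$; all of the geometry is then carried by the boundary-crossing constraint on the copy of $G$, which is what supplies the gain $o(\lambda_d(W))$.
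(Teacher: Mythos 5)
Your proof is correct and follows essentially the same route as the paper's: the Poincar\'e inequality, a bound on $\Delta_x$ in terms of $\deg(x,\Gamma(\eta_x))$ and a boundary-crossing event, the Mecke equation combined with spanning-tree bounds, and the collar/$R$-splitting argument that uses only $m_\varphi<\infty$. The only cosmetic difference is that the paper condenses the chain of $\varphi$-factors to a single crossing edge, reducing everything to $\int_W\int_{W^c}\varphi(x-y)\,dy\,dx$, whereas you retain the full $\ell$-fold convolution $G_\ell(d(u_1,\partial W))$; both are disposed of by the same limiting argument.
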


\begin{proof}
Let $G$ have $k\in\N$ vertices and assume that $k\geq 2$ since $\tilde{\eta}_G(W)=\eta_G(W)$ for $k=1$. It follows from the Poincar\'e inequality \eqref{epoincare} and similar arguments as in the proof of Lemma \ref{lem.first.diff} that
\begin{align*}
\V (\tilde{\eta}_{G}(W)-\eta_{G}(W)) &\leq \int \BE (\Delta_x(\tilde{\eta}_{G}(W)-\eta_{G}(W)))^2  \, dx \\
& \leq \int_W \BE \deg(x,\Gamma(\eta_x))^2  \1\big\{x \overset{\leq k}{\llra} W^c \text{ in } \Gamma(\eta_x)\big\} \, dx \\
& \quad +\int_{W^c} \BE \deg(x,\Gamma(\eta_x))^2 \1\big\{x \overset{\leq k}{\llra} W \text{ in } \Gamma(\eta_x)\big\} \, dx.
\end{align*}
Using similar arguments as in the proof of Lemma \ref{lem.diff.bound}, a longer computation yields that the right-hand side can be bounded by
$$
c_{k,\varphi}\int_W \int_{W^c} \varphi(x-y) \, dy \, dx,
$$
where $c_{k,\varphi}>0$ is a constant depending on $k$ and $\varphi$. For any fixed $R>0$ we have that
\begin{align*}
\frac{1}{\lambda_d(W)} \int_W \int_{W^c} \varphi(x-y) \, dy \, dx & \leq \frac{\lambda_d(\{x\in W: d(x, \partial W)\leq R\})}{\lambda_d(W)} m_\varphi\\
& \quad  + \frac{\lambda_d(\{x\in W: d(x, \partial W)\geq R\})}{\lambda_d(W)} \int_{B^d(0,R)^c} \varphi(y) \, dy.
\end{align*}
From \cite[Lemma 3.6 and Lemma 3.7]{HugLastSchulte2016} it follows that the first term on the right-hand side vanishes as $r(W)\to\infty$.
Since the second term tends to zero as $R\to\infty$, we obtain that
$$
\lim_{r(W)\to\infty} \frac{1}{\lambda_d(W)} \int_W \int_{W^c} \varphi(x-y) \, dy \, dx =0,
$$
which completes the proof.
\end{proof}

Combining the $L^2$-convergence from the previous lemma with Theorem \ref{thm:VarianceS} and Theorem \ref{thm:CLTGeneral} leads to the following corollary.

\begin{corollary}\label{cor:BoundaryEffects}
The statements of (a) Theorem \ref{thm:VarianceS} and (b) Theorem \ref{thm:CLTGeneral} are still valid with $\eta_{G_i}(W)$ replaced by $\tilde{\eta}_{G_i}(W)$ for $i\in[m]$.
\end{corollary}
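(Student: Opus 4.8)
The plan is to transfer both statements from the functional $S_{a,G}(W)=\sum_{i=1}^m a_i\,\eta_{G_i}(W)$ to its modification $\widetilde S_{a,G}(W):=\sum_{i=1}^m a_i\,\widetilde\eta_{G_i}(W)$ by an $L^2$-perturbation argument, the only new input being Lemma \ref{lem:BoundaryEffects}.

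First I would bound the two functionals against each other in $L^2(\P)$. Writing $D_i(W):=\widetilde\eta_{G_i}(W)-\eta_{G_i}(W)$, the triangle inequality in $L^2(\P)$ applied to the centred random variables gives
$$
\sqrt{\V\big(\widetilde S_{a,G}(W)-S_{a,G}(W)\big)}\le\sum_{i=1}^m|a_i|\,\sqrt{\V D_i(W)},
$$
so that Lemma \ref{lem:BoundaryEffects} yields $\lambda_d(W)^{-1}\,\V\big(\widetilde S_{a,G}(W)-S_{a,G}(W)\big)\to0$ as $r(W)\to\infty$. For the analogue of Theorem \ref{thm:VarianceS} I would combine this with the reverse triangle inequality $\big|\sqrt{\V\widetilde S_{a,G}(W)}-\sqrt{\V S_{a,G}(W)}\big|\le\sqrt{\V(\widetilde S_{a,G}(W)-S_{a,G}(W))}$ (again for the centred variables); dividing by $\sqrt{\lambda_d(W)}$ and invoking Theorem \ref{thm:VarianceS}, which provides $\lambda_d(W)^{-1}\,\V S_{a,G}(W)\to\sigma^2:=\sum_{i,j=1}^m a_ia_j\,\sigma_{\varphi,\varphi}(G_i,G_j)>0$, one obtains $\lambda_d(W)^{-1}\,\V\widetilde S_{a,G}(W)\to\sigma^2>0$, which is part (a).

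For part (b) I would mimic the proof of Theorem \ref{thm:CLTGeneral}. Writing $v:=\V S_{a,G}(W)$ and $\widetilde v:=\V\widetilde S_{a,G}(W)$ and using that $d_1$ is dominated by the $L^1$-distance, the triangle inequality — first replacing $\widetilde S_{a,G}(W)$ by $S_{a,G}(W)$ in the numerator, then $\sqrt{\widetilde v}$ by $\sqrt v$ in the denominator — gives
$$
d_1\!\left(\frac{\widetilde S_{a,G}(W)-\BE\widetilde S_{a,G}(W)}{\sqrt{\widetilde v}},\,N\right)\le\frac{\sqrt{\V(\widetilde S_{a,G}(W)-S_{a,G}(W))}}{\sqrt{\widetilde v}}+\left|\sqrt{\tfrac{v}{\widetilde v}}-1\right|+d_1\!\left(\frac{S_{a,G}(W)-\BE S_{a,G}(W)}{\sqrt v},\,N\right).
$$
By part (a) we have $\lambda_d(W)^{-1}\widetilde v\to\sigma^2>0$ and $v/\widetilde v\to1$; together with the $L^2$-bound from the previous paragraph the first two terms on the right-hand side vanish as $r(W)\to\infty$, while the last term vanishes by Theorem \ref{thm:CLTGeneral}. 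This gives part (b) in the $d_1$-distance (and hence in distribution).

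I do not expect a genuine obstacle here: essentially all of the difficulty has been absorbed into Lemma \ref{lem:BoundaryEffects}. The two points that require a little care are the order of the argument — part (a) must be established before one divides by $\sqrt{\widetilde v}$ in part (b) — and the elementary observation that $|\widetilde S_{a,G}(W)|\le|a|_\infty\,\eta(W)$ has moments of all orders, so that every variance and $L^1$-norm written above is finite.
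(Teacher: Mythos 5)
Your proposal is correct and is exactly the argument the paper intends: the paper's "proof" consists of the single remark that combining the $L^2$-convergence of Lemma \ref{lem:BoundaryEffects} with Theorems \ref{thm:VarianceS} and \ref{thm:CLTGeneral} yields the corollary, and your triangle-inequality decompositions (for the variances in part (a) and for $d_1$ in part (b), mirroring the proof of Theorem \ref{thm:CLTGeneral}) are precisely the details being suppressed. No gaps.
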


For $W\in\mathcal{K}^d$ and $m\in\N$ we define
$$
\tilde{\eta}_{\leq m}(W):= \sum_{k=1}^m \tilde{\eta}_{k}(W) \quad \text{and} \quad \tilde{\eta}_{>m}(W):= \sum_{k=m+1}^\infty \tilde{\eta}_{k}(W).
$$
Moreover, let
\begin{align*}
q_{\varphi,m} &:= \BP(\text{the sum of the orders of the finite components in } \Gamma(\eta)\\
&\hspace{1.5cm}\text{that are connected with } 0 \text{ in } \Gamma(\eta\cup\{0\}) \text{ is at least } m)
\end{align*}
for $m\in\N$. Note that $q_{\varphi,m}\to0$ as $m\to\infty$.

\begin{lemma}\label{lem:TempBoundsVar}
Assume that \eqref{e5.1} is satisfied and let $\widetilde{C}_\varphi := \BE(\deg(0,\Gamma(\eta\cup\{0\}))+1)^2$ and $C_\varphi:=\BE[\deg(0,\Gamma(\eta\cup\{0\}))^4]^{1/2}$. Then, for all $m,n\in\N$ with $m\leq n$,
$$
\limsup_{r(W)\to\infty} \frac{\V \tilde{\eta}_{\leq m}(W)}{\lambda_d(W)}\leq \widetilde{C}_\varphi, \quad
\limsup_{r(W)\to\infty} \frac{\V (\tilde{\eta}_{\leq m}(W)-\tilde{\eta}_{\leq n}(W))}{\lambda_d(W)}\leq C_\varphi \sqrt{q_{\varphi,m}},
$$
and
$$
\limsup_{r(W)\to\infty} \frac{\V \tilde{\eta}_{>m}(W)}{\lambda_d(W)}\leq C_\varphi \sqrt{q_{\varphi,m}}.
$$
\end{lemma}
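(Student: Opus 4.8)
The plan is to apply the Poincar\'e inequality \eqref{epoincare} to each of the functionals $\tilde\eta_{\le m}(W)$, $\tilde\eta_{\le m}(W)-\tilde\eta_{\le n}(W)$ and $\tilde\eta_{>m}(W)$, all of which lie in $L_\xi$ and are bounded in absolute value by $\eta(W)$ (hence square integrable), and to estimate $\int\BE(\Delta_xF)^2\,dx$ by splitting it into the contribution of $x\in W$ and of $x\in W^c$. The interior part will produce the asserted constant times $\lambda_d(W)$, and the exterior part will be shown to be $o(\lambda_d(W))$ using the boundary estimate already established inside the proof of Lemma \ref{lem:BoundaryEffects}, namely $\int_{W^c}\int_W\varphi(x-v)\,dv\,dx=o(\lambda_d(W))$ as $r(W)\to\infty$. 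Throughout I would write $\eta_x:=\eta\cup\{x\}$, let $d_W(x)$ be the number of neighbours of $x$ in $W$ within $\Gamma(\eta_x)$, and record that $\BE[d_W(x)^2]\le(1+m_\varphi)\int_W\varphi(x-v)\,dv$ by the Mecke equation, so that $\int_{W^c}\BE[d_W(x)^2]\,dx=o(\lambda_d(W))$.

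The structural input is that adding the point $x$ to $\eta$ merges exactly those components $P_1,\dots,P_j$ of $\Gamma(\eta)$ that are joined to $x$ by an edge (so $j\le\deg(x,\Gamma(\eta_x))$) into one new component $C_x\ni x$ of $\Gamma(\eta_x)$, leaving all other components untouched. For the first bound this gives at once $|\Delta_x\tilde\eta_{\le m}(W)|\le\deg(x,\Gamma(\eta_x))+1$ for $x\in W$; for $x\in W^c$ one has $C_x\not\subseteq W$, so only the counted $P_i$'s can contribute, each supplying a distinct neighbour of $x$ in $W$, whence $|\Delta_x\tilde\eta_{\le m}(W)|\le d_W(x)$. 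Hence $\V\tilde\eta_{\le m}(W)\le\int_W\BE(\deg(x,\Gamma(\eta_x))+1)^2\,dx+\int_{W^c}\BE[d_W(x)^2]\,dx=\widetilde C_\varphi\,\lambda_d(W)+o(\lambda_d(W))$ by stationarity, which is the first assertion.

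For the remaining two it is convenient to treat, for $m\le n\le\infty$, the single functional $G_{m,n}(W)$ counting the finite components of $\Gamma(\eta)$ with at least $m+1$ and at most $n$ vertices (no upper constraint if $n=\infty$) all of whose vertices lie in $W$, so that $G_{m,\infty}(W)=\tilde\eta_{>m}(W)$ and $\tilde\eta_{\le m}(W)-\tilde\eta_{\le n}(W)=-G_{m,n}(W)$ for finite $n$; thus it suffices to show $\limsup_{r(W)\to\infty}\V G_{m,n}(W)/\lambda_d(W)\le C_\varphi\sqrt{q_{\varphi,m}}$. Writing $s_x$ for the sum of the orders of the \emph{finite} components among $P_1,\dots,P_j$, the key step---and the one I expect to require the most care, since it is what couples the growing constant to $q_{\varphi,m}$---is the implication $\Delta_xG_{m,n}(W)\ne0\Rightarrow s_x\ge m$: if $s_x<m$ then every finite $P_i$ has at most $s_x\le m$ vertices and, when $C_x$ is finite, $|C_x|=s_x+1\le m$, so none of these components is counted by $G_{m,n}$ either before or after adding $x$ (infinite components, and $C_x$ when infinite, are never counted), and therefore $\Delta_xG_{m,n}(W)=0$; moreover on $\{\Delta_xG_{m,n}(W)\ne0\}$ at least one finite $P_i$ is merged, so $|\Delta_xG_{m,n}(W)|\le\deg(x,\Gamma(\eta_x))\,\1\{s_x\ge m\}$ for $x\in W$ and $|\Delta_xG_{m,n}(W)|\le d_W(x)$ for $x\in W^c$. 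By the Cauchy--Schwarz inequality and stationarity of $\eta$ (which makes $(\deg(x,\Gamma(\eta_x)),s_x)$ distributed as $(\deg(0,\Gamma(\eta\cup\{0\})),s_0)$, with $\P(s_0\ge m)=q_{\varphi,m}$),
\[
\int_W\BE\big[\deg(x,\Gamma(\eta_x))^2\,\1\{s_x\ge m\}\big]\,dx\le\big(\BE[\deg(0,\Gamma(\eta\cup\{0\}))^4]\big)^{1/2}q_{\varphi,m}^{1/2}\,\lambda_d(W)=C_\varphi\sqrt{q_{\varphi,m}}\,\lambda_d(W),
\]
while the exterior integral is again $o(\lambda_d(W))$ by the argument above, which completes the proof. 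The only other point needing attention is to verify the bound $|\Delta_xF|\le d_W(x)$ uniformly for all $x\in W^c$, so that the boundary estimate from Lemma \ref{lem:BoundaryEffects} can be reused verbatim.
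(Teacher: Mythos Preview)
Your proof is correct and follows essentially the same route as the paper's: apply the Poincar\'e inequality \eqref{epoincare}, split the integral into $x\in W$ and $x\in W^c$, bound $|\Delta_xF|$ for $x\in W$ by $\deg(x,\Gamma(\eta_x))+1$ (respectively by $\deg(x,\Gamma(\eta_x))\1\{s_x\ge m\}$, which is exactly the paper's $\1_{B_m(x)}$), bound all three difference operators for $x\in W^c$ by $d_W(x)=\sum_{y\in\eta\cap W}\1\{x\leftrightarrow y\}$, and show the boundary integral is $o(\lambda_d(W))$ via the computation from the proof of Lemma \ref{lem:BoundaryEffects}; the conclusion then follows from stationarity and Cauchy--Schwarz. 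Your unification of the last two inequalities through the single functional $G_{m,n}$ (with $n\le\infty$) is a minor notational convenience but otherwise the argument matches the paper's line by line.
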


\begin{proof}
For $k\in\N$ and $x\in\R^d$ let $B_k(x)$ denote the event that the sum of the orders of the finite components in $\Gamma(\eta)$ that are connected with $x$ in $\Gamma(\eta_x)$ is at least $k$.
The stationarity of $\eta$ implies $\BP(B_k(x))=q_{\varphi,k}$ for $k\in\N$ and $x\in\R^d$.
For $W\in\mathcal{K}^d$ and $x\in W$ we have
\begin{align*}
|\Delta_x \tilde{\eta}_{\leq m}(W)| & \leq \deg(x,\Gamma(\eta_x)) +1, \allowdisplaybreaks \\
|\Delta_x (\tilde{\eta}_{\leq m}(W)-\tilde{\eta}_{\leq n}(W))| & \leq  \deg(x,\Gamma(\eta_x)) \1_{B_m(x)}, \allowdisplaybreaks \\
|\Delta_x\tilde{\eta}_{>m}(W)| & \leq  \deg(x,\Gamma(\eta_x)) \1_{B_m(x)}
\end{align*}
similarly as in Lemma \ref{lem.first.diff}. For $x\in W^c$ all left-hand sides can be bounded by
$$
\sum_{y\in\eta\cap W}\1\{x\lra y \text{ in } \Gamma(\eta_x)\}.
$$
Using the same arguments as in the last step of the proof of Lemma \ref{lem:BoundaryEffects} one can show that
\begin{align*}
\lim_{r(W)\to\infty} &\frac{1}{\lambda_d(W)} \int_{W^c} \BE \bigg(\sum_{y\in\eta\cap W}\1\{x\lra y \text{ in } \Gamma(\eta_x)\}\bigg)^2 \, dx \\
& = \lim_{r(W)\to\infty} \frac{1}{\lambda_d(W)} \int_{W^c} \Bigg( \bigg(\int_W \varphi(y-x) \, dy \bigg)^2 + \int_W \varphi(y-x) \, dy \Bigg)  \, dx =0.
\end{align*}
Now the Poincar\'e inequality \eqref{epoincare} in combination with the Cauchy-Schwarz inequality proves the desired inequalities.
\end{proof}

\begin{lemma}\label{lem:LowerBoundVarianceBall}
If \eqref{e5.1} is satisfied, then
$$
\liminf_{r\to\infty} \frac{\V \bar{\eta}(B^d(0,r))}{r^d}>0.
$$
\end{lemma}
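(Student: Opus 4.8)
The plan is to follow the lower-bound argument from the proof of Theorem~\ref{thm:VarianceS}, specialised to $F=\bar\eta(B^d(0,r))$ and exploiting that adding an \emph{isolated} vertex inside $B^d(0,r)$ always creates exactly one new counted component. Recall that we may assume $\beta=1$. Fix $r>0$, let $f_r$ be a representative of $\bar\eta(B^d(0,r))$, and let $\hat\eta$ be the Poisson process on $\R^d\times[0,1]\times[0,1]^{\N\times\N}$ with intensity $\lambda_d\otimes\lambda_1|_{[0,1]}\otimes\Lambda$, where $\Lambda:=(\lambda_1|_{[0,1]})^{\N\times\N}$, and let $T$ be the edge marking of Section~\ref{secdiffop}; then $\bar\eta(B^d(0,r))\overset{d}{=}f_r(T(\hat\eta))$, and $\BE f_r(T(\hat\eta))^2<\infty$ since $\bar\eta(B^d(0,r))\le\eta(B^d(0,r))$, a Poisson variable. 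For $(x,t,M)\in\R^d\times[0,1]\times[0,1]^{\N\times\N}$ set
$$
D_{x,t,M}:=f_r\big(T(\hat\eta\cup\{(x,t,M)\})\big)-f_r\big(T(\hat\eta\cup\{(x,t,M)\})\setminus\{x\}\big),
$$
and let $A(x,t,M)$ be the event that $x\in B^d(0,r)$ and that $x$ is connected to no point of $\hat\eta_t$ in $\widetilde\Gamma(\hat\eta_t\cup\{(x,t,M)\})$. Arguing as in the proof of Theorem~\ref{thm:VarianceS}, $A(x,t,M)$ is $\sigma(\hat\eta_t)$-measurable, so $\BE[D_{x,t,M}\mid\hat\eta_t]^2\ge\BE[D_{x,t,M}\1_{A(x,t,M)}\mid\hat\eta_t]^2$ and Theorem~\ref{thm:VarianceRepresentation} gives
\begin{equation}\label{eqn:plan.lower.ball}
\V\bar\eta(B^d(0,r))\ \ge\ \int_{\R^d}\int_0^1\int\BE\big[\BE[D_{x,t,M}\1_{A(x,t,M)}\mid\hat\eta_t]^2\big]\,\Lambda(dM)\,dt\,dx.
\end{equation}

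Next I would estimate the conditional expectation on $A(x,t,M)$. Let $\tilde A(x,t,M)\subset A(x,t,M)$ be the event that $x$ is isolated in $\widetilde\Gamma(\hat\eta\cup\{(x,t,M)\})$; on $A(x,t,M)$ this coincides with the event that $x$ has no neighbour among the points of $\hat\eta_{[t,1)}$, and conditionally on $\hat\eta_t$ the number of such neighbours is Poisson distributed with mean $(1-t)m_\varphi$, independently of $\hat\eta_t$. On $\tilde A(x,t,M)$ removing $x$ deletes only the singleton component $\{x\}\subset B^d(0,r)$, so $D_{x,t,M}=1$; on $A(x,t,M)\setminus\tilde A(x,t,M)$ one has $|D_{x,t,M}|\le\deg(x,\widetilde\Gamma(\hat\eta\cup\{(x,t,M)\}))$, because deleting a vertex of degree $\ell\ge1$ destroys at most one finite component and splits it into at most $\ell$ pieces, and on $A(x,t,M)$ this degree equals the number of neighbours of $x$ in $\hat\eta_{[t,1)}$ (as in Lemma~\ref{lem.first.diff}). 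Combining these observations,
$$
\BE[D_{x,t,M}\1_{A(x,t,M)}\mid\hat\eta_t]\ \ge\ \1_{A(x,t,M)}\big(e^{-(1-t)m_\varphi}-(1-t)m_\varphi\big),
$$
and since $e^{-s}-s\to1$ as $s\to0$ I would fix $t_0\in[0,1)$ with $e^{-(1-t)m_\varphi}-(1-t)m_\varphi\ge\tfrac12$ for all $t\in[t_0,1]$. Then for $t\in[t_0,1]$ the right-hand side is at least $\tfrac12\1_{A(x,t,M)}$, whence $\BE[\BE[D_{x,t,M}\1_{A(x,t,M)}\mid\hat\eta_t]^2]\ge\tfrac14\BP(A(x,t,M))$.

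It remains to evaluate the $M$-integral: by the construction of the edge marking, integrating over $M$ with respect to $\Lambda$ makes $x$'s connections to the points of $\hat\eta_t$ into an independent RCM marking, so $\int\BP(A(x,t,M))\,\Lambda(dM)$ equals $\1\{x\in B^d(0,r)\}$ times the probability that $x$ is isolated in the RCM on $\{x\}\cup\eta'$, where $\eta'$ is a Poisson process of intensity $t\lambda_d$; by the formula for the generating functional this probability is $\exp[\,t\!\int(\bar\varphi(x-y)-1)\,dy\,]=e^{-tm_\varphi}$, which is positive because $m_\varphi<\infty$. Substituting this and the estimate above into \eqref{eqn:plan.lower.ball} and discarding the range $t<t_0$ yields
$$
\V\bar\eta(B^d(0,r))\ \ge\ \frac14\int_{t_0}^1\int_{B^d(0,r)}e^{-tm_\varphi}\,dx\,dt\ =\ \frac{\lambda_d(B^d(0,r))}{4m_\varphi}\big(e^{-t_0m_\varphi}-e^{-m_\varphi}\big).
$$
As $0<m_\varphi<\infty$ by \eqref{e5.1} and $t_0<1$, the right-hand side is a fixed positive multiple of $\lambda_d(B^d(0,r))$, hence of $r^d$; dividing by $r^d$ proves the claim (indeed $\V\bar\eta(B^d(0,r))/r^d$ is bounded below by a positive constant, uniformly in $r$). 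The only step that genuinely requires care, and it is carried out exactly as in the proofs of Theorem~\ref{thm:VarianceS} and Lemma~\ref{lem.first.diff}, is the bookkeeping of $D_{x,t,M}$ through the birth-time edge-marking construction $T$: verifying that $A(x,t,M)$ is $\sigma(\hat\eta_t)$-measurable, that an isolated added vertex lying in $B^d(0,r)$ contributes exactly $+1$ to $f_r$, and that a connected added vertex of degree $\ell$ changes $\bar\eta(B^d(0,r))$ by at most $\ell$ in absolute value.
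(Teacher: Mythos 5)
Your proof is correct, and its skeleton is the same as the paper's: both start from the variance representation of Theorem \ref{thm:VarianceRepresentation}, restrict to a good event that is $\sigma(\hat\eta_t)$-measurable for fixed $(x,t,M)$, and push $t$ towards $1$ so that the later-born points are unlikely to spoil the configuration. The difference lies in the choice of event and the resulting sign analysis. The paper conditions on $x$ having two pendant neighbours $y_1,y_2\in B^d(0,r)$ in $\widetilde\Gamma(\hat\eta_t\cup\{(x,t,M)\})$; since removing a non-isolated vertex cannot decrease $\bar\eta$, the increment is then non-positive on the whole event and one only needs a probability lower bound for the sub-event that $y_1,y_2$ remain pendant, so no moment bound on the degree of $x$ is required. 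The price is that $\int\P(B(x,t,M))\,\Lambda(dM)$ must be computed via the Mecke equation and its positivity reduces to showing $\int\varphi(y_1)\varphi(y_2)\bar\varphi(y_1-y_2)\,d(y_1,y_2)>0$, which needs a separate geometric argument handling the case where $\varphi=1$ on a set of positive measure. Your event (isolation of $x$ from $\hat\eta_t$) gives an increment of exactly $+1$ on the sub-event of total isolation, but the increment can have either sign on the remainder, so you must compensate with the first-moment bound $\BE[\deg(x,\cdot)\,|\,\hat\eta_t]=(1-t)m_\varphi$ on $A(x,t,M)$ --- the same computation the paper uses in the proof of Theorem \ref{thm:VarianceS}, and your bound $|D_{x,t,M}|\le\deg(x,\cdot)$ for a non-isolated added vertex is justified exactly as you say. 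In return, $\int\P(A(x,t,M))\,\Lambda(dM)=\1\{x\in B^d(0,r)\}e^{-tm_\varphi}$ comes directly from the generating functional and is trivially positive, so you avoid the positivity argument \eqref{eqn:IntegralVarphis} entirely. Both routes are sound; yours is marginally shorter at the final stage.
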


\begin{proof}
For $r>0$ let $g_r$ be a representative of $\bar{\eta}(B^d(0,r))$. In the following we use the same notation and a similar approach as in the proof of Theorem \ref{thm:VarianceS}.

For $(x,t,M)\in\R^d\times[0,1]\times[0,1]^{\mathbb{N}\times\mathbb{N}}$ we denote by $B(x,t,M)$ the event that there are two distinct vertices $y_1,y_2\in B^d(0,r)$ in $\widetilde{\Gamma}(\hat{\eta}_t\cup\{(x,t,M)\})$ which both are only connected to $x$ in $\widetilde{\Gamma}(\hat{\eta}_t\cup\{(x,t,M)\})$ (i.e., each of them has degree one and a single edge to $x$).

It follows from Theorem \ref{thm:VarianceRepresentation} that
\begin{align*}
\V \bar{\eta}(B^d(0,r)) & \geq \int_{B^d(0,r)} \int_0^1 \int \BE\big[ \BE\big[ g_r(T(\hat{\eta}\cup\{(x,t,M)\})) - g_r(T(\hat{\eta}\cup\{(x,t,M)\})\setminus\{x\}) | \hat{\eta}_t \big]^2\\
 & \hspace{3.75cm} \times \mathbf{1}_{B(x,t,M)} \big]   \, \Lambda(dM)\, dt \, dx.
\end{align*}
If a non-isolated vertex is removed, the number of components can not decrease. Hence, we have that
\begin{align*}
& \BE\big[ g_r(T(\hat{\eta}\cup\{(x,t,M)\})) - g_r(T(\hat{\eta}\cup\{(x,t,M)\})\setminus\{x\}) | \hat{\eta}_t \big] \mathbf{1}_{B(x,t,M))} \\
& \leq - \BP\big(\text{$y_1,y_2$ only connected to $x$ in $\widetilde{\Gamma}(\hat{\eta}\cup\{(x,t,M)\})$} | \hat{\eta}_t \big) \mathbf{1}_{B(x,t,M)}.
\end{align*}
Now a short computation proves that
\begin{align*}
& \BP\big(\text{$y_1,y_2$ only connected to $x$ in $\widetilde{\Gamma}(\hat{\eta}\cup\{(x,t,M)\})$}\} | \hat{\eta}_t \big) \mathbf{1}_{B(x,t,M)}\\
& \geq \big(1 - \BP\big(\operatorname{deg}(y_1,\widetilde{\Gamma}(\hat{\eta}\cup\{(x,t,M)\}))\geq 2| \hat{\eta}_t \big) -\BP\big(\operatorname{deg}(y_2,\widetilde{\Gamma}(\hat{\eta}\cup\{(x,t,M)\}))\geq 2 | \hat{\eta}_t \big)\big) \\
& \quad \quad \times \mathbf{1}_{B(x,t,M)}\\
& =\big(1 - (1-\exp(-(1-t)m_\varphi)) - (1-\exp(-(1-t)m_\varphi)) \big) \mathbf{1}_{B(x,t,M)} \\
& =\big(2\exp(-(1-t) m_\varphi) - 1\big) \mathbf{1}_{B(x,t,M)}.
\end{align*}
Choosing $t_0\in [0,1)$ such that $2\exp(-(1-t)m_\varphi) - 1 \geq \frac{1}{2}$ for $t\in[t_0,1]$, we obtain that
$$
\V \bar{\eta}(B^d(0,r)) \geq \frac{1}{4} \int_{B^d(0,r)} \int_{t_0}^1 \int  \mathbb{P}(B(x,t,M)) \, \Lambda(dM)  \, dt \, dx.
$$
For $x\in B^d(0,r)$ and $t\in [t_0,1]$ we have that
\begin{align*}
& \int  \mathbb{P}(B(x,t,M)) \, \Lambda(dM) \\
& = \frac{1}{2}\int \BE \sideset{}{^{\ne}}\sum_{((y_1,t_1,M_1),(y_2,t_2,M_2))\in\hat{\eta}_t^2} \mathbf{1}\{y_1,y_2\in B^d(0,r), \{x,y_1,y_2\} \text{ is a component}\\
& \hskip 6.5cm\text{and $y_1\not\leftrightarrow y_2$}
 \text{ in }\widetilde{\Gamma}(\hat{\eta}_t\cup\{(x,t,M)\})\} \, \Lambda(dM)\\
& = \frac{t^2}{2} \int_{B^d(0,r)^2} \varphi(x-y_1) \varphi(x-y_2) \bar{\varphi}(y_1-y_2) \\
& \hspace{3cm} \times \exp\bigg[t \int \big(\bar{\varphi}(x-y)\bar{\varphi}(y_1-y)\bar{\varphi}(y_2-y) -1\big) \, dy \bigg] \, d(y_1,y_2).
\end{align*}
This implies that
\begin{align*}
\V \bar{\eta}(B^d(0,r)) & \geq \frac{(1-t_0)t_0^2}{8} \int_{B^d(0,r)^3} \varphi(x-y_1) \varphi(x-y_2) \bar{\varphi}(y_1-y_2)\\ & \hspace{3cm} \times \exp\bigg[\int \big(\bar{\varphi}(x-y)\bar{\varphi}(y_1-y)\bar{\varphi}(y_2-y) -1\big) \, dy \bigg] \, d(x,y_1,y_2).
\end{align*}
Consequently we have that
\begin{align*}
& \liminf_{r\to\infty} \frac{\V \bar{\eta}(B^d(0,r))}{\lambda_d(B^d(0,r))}\\
 & \geq \frac{(1-t_0)t_0^2}{8} \int \varphi(y_1) \varphi(y_2) \bar{\varphi}(y_1-y_2)  \exp\bigg[\int \big(\bar{\varphi}(y)\bar{\varphi}(y_1-y)\bar{\varphi}(y_2-y) -1\big) \, dy \bigg] \, d(y_1,y_2).
\end{align*}
Next we show
\begin{equation} \label{eqn:IntegralVarphis}
\int \varphi(y_1) \varphi(y_2) \bar{\varphi}(y_1-y_2) \, d(y_1,y_2) >0,
\end{equation}
which completes the proof. If $\lambda_d(\{z\in \R^d: \varphi(z)=1\})=0$, this is obviously true. Otherwise, one can choose a $r_0\in (0,\infty)$ such that
$$
0<\lambda_d(\{z\in B^d(0,r_0)^c: \varphi(z)=1\}) < \frac{1}{2} \lambda_d(\{z\in \R^d: \varphi(z)=1\}).
$$
Together with
\begin{align*}
\lambda_d(\{y\in \R^d: \varphi(y)=1\}\setminus B^d(x,r_0)) & \geq \lambda_d(\{ y\in \R^d: \varphi(y)=1, \langle x,y \rangle\leq 0 \}) \\
& = \frac{1}{2} \lambda_d(\{ y\in \R^d: \varphi(y)=1 \})
\end{align*}
for $x\in B^d(0,r_0)^c$, this yields that
$$
\lambda_d^2(\{(y_1,y_2)\in B^d(0,r_0)^c\times \R^d : \varphi(y_1)=1, \varphi(y_2)=1, \varphi(y_1-y_2)\neq 1\})>0
$$
and proves \eqref{eqn:IntegralVarphis}.
\end{proof}

\begin{proof}[Proof of Theorem \ref{thm:CLTTotal}]
For $m\in\N$ Corollary \ref{cor:BoundaryEffects} (a) yields
$$
\sigma_{\varphi,\leq m}:=\lim_{r(W)\to\infty} \frac{\V \tilde{\eta}_{\leq m}(W)}{\lambda_d(W)} = \sum_{i,j=1}^m \sigma_{\varphi,\varphi}^{(i,j)}.
$$
We have that, for $m,n\in\N$ with $m\leq n$,
\begin{align*}
& \big| \sigma_{\varphi,\leq m} - \sigma_{\varphi,\leq n}\big| \\
& = \lim_{r(W)\to\infty} \frac{1}{\lambda_d(W)}| \V \tilde{\eta}_{\leq m}(W) - \V \tilde{\eta}_{\leq n}(W)| \allowdisplaybreaks \\
& = \lim_{r(W)\to\infty} \frac{1}{\lambda_d(W)} \Big|\sqrt{\V \tilde{\eta}_{\leq m}(W)} + \sqrt{\V \tilde{\eta}_{\leq n}(W)}\Big| \, \Big|\sqrt{\V \tilde{\eta}_{\leq m}(W)} - \sqrt{\V \tilde{\eta}_{\leq n}(W)}\Big| \allowdisplaybreaks \\
& \leq \lim_{r(W)\to\infty} \frac{1}{\lambda_d(W)} \Big|\sqrt{\V \tilde{\eta}_{\leq m}(W)} + \sqrt{\V \tilde{\eta}_{\leq n}(W)}\Big| \, \sqrt{\V (\tilde{\eta}_{\leq m}(W) - \tilde{\eta}_{\leq n}(W))} \\
& \leq 2 \widetilde{C}_\varphi^{1/2} C_\varphi^{1/2} q_{\varphi,m}^{1/4},
\end{align*}
where we used the triangle inequality in $L^2(\BP)$ and Lemma \ref{lem:TempBoundsVar}. Since $q_{\varphi,m}\to0$ as $m\to\infty$, $(\sigma_{\varphi,\leq m})_{m\in\N}$ is a Cauchy sequence. Thus the limit
$$
\tilde{\sigma}_{\varphi}:= \lim_{m\to\infty} \sigma_{\varphi,\leq m} = \lim_{m\to\infty} \sum_{i,j=1}^m \sigma_{\varphi,\varphi}^{(i,j)}
$$
exists and is finite.

It follows from the triangle inequality in $L^2(\BP)$ and Lemma \ref{lem:TempBoundsVar} that, for $m\in\N$,
\begin{align*}
& \limsup_{r(W)\to\infty} \frac{\big| \sqrt{\V \bar{\eta}(W)} -  \sqrt{\V \tilde{\eta}_{\leq m}(W)} \big|}{\sqrt{\lambda_d(W)}}\\
& \leq \limsup_{r(W)\to\infty} \frac{\sqrt{\V (\bar{\eta}(W) - \tilde{\eta}_{\leq m}(W))} }{\sqrt{\lambda_d(W)}}  = \limsup_{r(W)\to\infty} \frac{\sqrt{\V \tilde{\eta}_{> m}(W)}}{\sqrt{\lambda_d(W)}}  \leq C_\varphi^{1/2} q_{\varphi,m}^{1/4}.
\end{align*}
Since $\lim_{r(W)\to\infty} \lambda_d(W)^{-1} \V \tilde{\eta}_{\leq m}(W)=\sigma_{\varphi,\leq m}$ for any $m\in\N$ and $q_{\varphi,m} \to 0$ as $m\to\infty$, we obtain that the limit $\sigma_{\varphi,\varphi}$ in \eqref{eqn:DefinitionLimitVarianceTotalNumber} exists and equals $\tilde{\sigma}_\varphi$. Moreover, Lemma \ref{lem:LowerBoundVarianceBall} yields $\sigma_{\varphi,\varphi}>0$.

Let $h:\R\to\R$ be a function with Lipschitz constant at most one. For any $m\in\N$ and $W\in\mathcal{K}^d$, the triangle inequality implies
$$
\bigg| \BE h\bigg( \frac{\bar{\eta}(W)- \BE \bar{\eta}(W)}{\sqrt{\V \bar{\eta}(W)}} \bigg) - \BE h(N) \bigg| \leq U_1+U_2+U_3
$$
with
\begin{align*}
U_1 & := \bigg| \BE h\bigg( \frac{\bar{\eta}(W)- \BE \bar{\eta}(W)}{\sqrt{\V \bar{\eta}(W)}} \bigg) - \BE h\bigg( \frac{\tilde{\eta}_{\leq m}(W)- \BE \tilde{\eta}_{\leq m}(W)}{\sqrt{\V \bar{\eta}(W)}} \bigg)  \bigg|,  \allowdisplaybreaks \\
U_2 & :=  \bigg| \BE h\bigg( \frac{\tilde{\eta}_{\leq m}(W)- \BE \tilde{\eta}_{\leq m}(W)}{\sqrt{\V \bar{\eta}(W)}} \bigg) - \BE h\bigg( \frac{\tilde{\eta}_{\leq m}(W)- \BE \tilde{\eta}_{\leq m}(W)}{\sqrt{\V \tilde{\eta}_{\leq m}(W)}} \bigg)  \bigg|, \allowdisplaybreaks \\
U_3 & :=  \bigg| \BE h\bigg( \frac{\tilde{\eta}_{\leq m}(W)- \BE \tilde{\eta}_{\leq m}(W)}{\sqrt{\V \tilde{\eta}_{\leq m}(W)}} \bigg) - \BE h(N) \bigg|.
\end{align*}
From Corollary \ref{cor:BoundaryEffects} (b) we know that, for $r(W)\to\infty$, $\tilde{\eta}_{\leq m}(W)$ satisfies a central limit theorem and, thus, $U_3\to 0$. Using the Lipschitz property of $h$, the Jensen inequality and Lemma \ref{lem:TempBoundsVar}, we see that
\begin{align*}
\limsup_{r(W)\to\infty} U_1 & \leq \limsup_{r(W)\to\infty} \frac{\BE |\bar{\eta}(W) - \BE \bar{\eta}(W) -(\tilde\eta_{\leq m}(W)-\BE \tilde\eta_{\leq m}(W))|}{\sqrt{\V \bar{\eta}(W)}}\\
& \leq \limsup_{r(W)\to\infty} \frac{\sqrt{\V (\bar{\eta}(W)-\tilde\eta_{\leq m}(W))}}{\sqrt{\V \bar{\eta}(W)}} = \limsup_{r(W)\to\infty} \frac{\sqrt{\V \tilde\eta_{>m}(W)}}{\sqrt{\V \bar{\eta}(W)}} \leq \frac{C_\varphi^{1/2}}{\sqrt{\sigma_{\varphi,\varphi}}} q_{\varphi,m}^{1/4}.
\end{align*}
Again, by the Lipschitz property of $h$ and the Jensen inequality, we have
$$
\limsup_{r(W)\to\infty} U_2 \leq \limsup_{r(W)\to\infty} \bigg|1- \frac{\sqrt{\V \tilde{\eta}_{\leq m}(W)}} {\sqrt{\V \bar{\eta}(W)}} \bigg| \, \BE\bigg|  \frac{\tilde{\eta}_{\leq m}(W)- \BE \tilde{\eta}_{\leq m}(W)}{\sqrt{\V \tilde{\eta}_{\leq m}(W)}}\bigg| \leq  \bigg|1-\frac{\sqrt{\sigma_{\varphi,\leq m}}}{\sqrt{\sigma_{\varphi,\varphi}}} \bigg|.
$$
Since $q_{\varphi,m}\to0$ and $\sigma_{\varphi,\leq m}/\sigma_{\varphi,\varphi}\to1$ as $m\to\infty$, letting first $r(W)\to\infty$ and then $m\to\infty$ yields
$$
\lim_{r(W)\to\infty} \BE h\bigg( \frac{\bar{\eta}(W)- \BE \bar{\eta}(W)}{\sqrt{\V \bar{\eta}(W)}} \bigg) = \BE h(N),
$$
which completes the proof.
\end{proof}

\appendix

\section{A variance representation for Poisson functionals}

In this appendix we derive a variance representation for functionals of Poisson processes with birth times in terms of difference operators and conditional expectations. The proof heavily relies on some results from \cite{LastPenrose2011}.

Let $(\mathbf{Y},\mathcal{Y})$ be a measurable space with a $\sigma$-finite measure $\mu$ and let $\eta'$ be a Poisson process on $\mathbf{Y}\times [0,1]$ with intensity measure $\mu\otimes \lambda_1|_{[0,1]}$, where $\lambda_1|_{[0,1]}$ denotes the restriction of the Lebesgue measure to the unit interval. For $t\in[0,1]$ let $\eta_t'$ be the restriction of $\eta'$ to $\mathbf{Y}\times [0,t)$. Recall that $\mathbf{N}(\mathbf{Y}\times[0,1])$ is the set of $\sigma$-finite counting measures on $\mathbf{Y}\times [0,1]$, which is equipped with the smallest $\sigma$-field such that the maps $\mathbf{N}(\mathbf{Y}\times [0,1])\ni\nu\mapsto \nu(A)$ are measurable for all measurable $A\subset \mathbf{Y}\times [0,1]$.

\begin{theorem}\label{thm:appendix.variance.repre}
Let $F=f(\eta')$ with a measurable $f: \mathbf{N}(\mathbf{Y}\times [0,1]) \to \R$ be such that $\BE F^2<\infty$. Then
$$
\V F = \int_0^1 \int \BE[ \BE[ D_{(x,t)}F | \eta_t']^2]  \, \mu(dx) \, dt.
$$
\end{theorem}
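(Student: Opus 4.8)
The plan is to use the Wiener--Itô chaos expansion of square integrable Poisson functionals from \cite{LastPenrose2011}. Write $\ell := \mu\otimes\lambda_1|_{[0,1]}$ for the intensity measure of $\eta'$ on $\mathbf{Y}\times[0,1]$, let $D^n$ denote the $n$-th order difference operator of a functional of $\eta'$, let $I_n$ denote the $n$-th multiple Wiener--Itô integral with respect to the compensated measure $\eta'-\ell$, and let $\|\cdot\|_n$ be the norm of $L^2(\ell^n)$. Since the difference operator annihilates constants and the variance is shift invariant, we may assume $\BE F=0$. By \cite{LastPenrose2011}, the kernels $f_n(z_1,\dots,z_n):=\tfrac1{n!}\BE[D^n_{z_1,\dots,z_n}F]$ are symmetric and lie in $L^2(\ell^n)$, one has $F=\sum_{n\ge1}I_n(f_n)$ in $L^2(\P)$, the chaoses are pairwise orthogonal with $\BE I_n(g)^2=n!\|g\|_n^2$, and consequently $\V F=\sum_{n\ge1}n!\,\|f_n\|_n^2$.

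The computation then proceeds chaoswise. Using $D_{(x,t)}I_n(f_n)=n\,I_{n-1}\big(f_n((x,t),\cdot)\big)$ one obtains $D_{(x,t)}F=\sum_{n\ge1}n\,I_{n-1}\big(f_n((x,t),\cdot)\big)$; and conditioning on $\eta_t'=\eta'\!\restriction_{\mathbf{Y}\times[0,t)}$ simply truncates each kernel to the ``past'', i.e.\ replaces it by its restriction to $(\mathbf{Y}\times[0,t))^{n-1}$ (this conditioning rule for multiple integrals is checked directly for $I_1$ via the Mecke equation and extended in general, cf.\ \cite{LastPenrose2011,LastPenrose}). Squaring, taking expectations, and using orthogonality gives
\[
\BE\big[\BE[D_{(x,t)}F\mid\eta_t']^2\big]=\sum_{n\ge1}n^2(n-1)!\int_{(\mathbf{Y}\times[0,t))^{n-1}} f_n\big((x,t),z_2,\dots,z_n\big)^2\,\ell^{n-1}(d(z_2,\dots,z_n)).
\]

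Finally I would integrate over $(x,t)\in\mathbf{Y}\times[0,1]$ and interchange the (nonnegative) sum and integral by Tonelli. For fixed $n$ the integral equals $\int_{(\mathbf{Y}\times[0,1])^n}f_n(z_1,\dots,z_n)^2\,\1\{\text{the time coordinate of }z_1\text{ strictly exceeds those of }z_2,\dots,z_n\}\,\ell^n(d(z_1,\dots,z_n))$. Since $\lambda_1$ is diffuse, $\ell^n$-a.e.\ the $n$ time coordinates are distinct, so the $n$ indicators obtained by declaring each of the variables in turn to carry the largest time coordinate sum to $1$ a.e.; as $f_n^2$ is symmetric they all contribute the same amount, and this integral equals $\tfrac1n\|f_n\|_n^2$. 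Hence the right-hand side of the asserted identity equals $\sum_{n\ge1}n^2(n-1)!\cdot\tfrac1n\|f_n\|_n^2=\sum_{n\ge1}n!\,\|f_n\|_n^2=\V F$.

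I expect the main obstacle to be the $L^2$-bookkeeping needed to make the chaoswise manipulations rigorous: the termwise identity for $D_{(x,t)}F$ and the passage inside $\BE[\cdot\mid\eta_t']$ are immediate when $F$ lies in the domain of $D$ (where the relevant chaos series converge in $L^2$), so the cleanest route is to establish the identity first on that dense class and then extend to all $F\in L^2(\P)$ by approximation, noting that --- thanks precisely to the truncation to the past --- the total integral $\int\BE[\BE[D_{(x,t)}F\mid\eta_t']^2]\,\ell(d(x,t))$ is finite and depends $L^2$-continuously on $F$. The only other delicate point is the conditioning rule for multiple Wiener--Itô integrals. An alternative derivation avoiding the chaos machinery altogether is to run the martingale $M_t:=\BE[F\mid\eta'\!\restriction_{\mathbf{Y}\times[0,t]}]$, identify its jump at a Poisson point $(x,t)$ with $\BE[D_{(x,t)}F\mid\eta_t']$ by disintegrating over the independent future, and convert $\BE\sum_{(x,t)\in\eta'}(\Delta M_t)^2$ into the claimed integral via the Mecke equation; there the obstacle shifts to the predictable representation property of the Poisson filtration.
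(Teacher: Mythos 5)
Your argument is correct in substance, but it takes a different (and much longer) route than the paper. The paper's proof is essentially a two-step citation: the order $(x_1,t_1)\prec(x_2,t_2)\Leftrightarrow t_1<t_2$ puts the setting into the framework of \cite{LastPenrose2011}, whose Theorem 2.1 gives the Clark--Ocone-type representation $F-\BE F=\delta(h)$ with $h(x,t)=\BE[D_{(x,t)}F\mid\eta_t']$ together with the finiteness of $\int\BE h^2\,d(\mu\otimes\lambda_1)$, and whose Corollary 2.7 gives the isometry $\BE\,\delta(h)^2=\int\BE h^2\,d(\mu\otimes\lambda_1)$. Your chaoswise computation effectively re-proves these two cited results from the Fock-space expansion: the truncation rule $\BE[I_{n-1}(g)\mid\eta_t']=I_{n-1}(g\1_{(\mathbf{Y}\times[0,t))^{n-1}})$, the orthogonality of the chaoses, and the time-ordering argument producing the factor $1/n$ are exactly the ingredients behind the martingale representation in \cite{LastPenrose2011}; indeed your closing ``alternative'' via the martingale $M_t=\BE[F\mid\eta_t']$ is closest in spirit to what the citation actually delivers. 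What your version buys is transparency --- it shows explicitly why no domain condition on $D$ is needed (the conditioning to the past tames the derivative chaos by chaos) and where the cancellation $n^2(n-1)!\cdot\tfrac1n=n!$ comes from; what it costs is precisely the $L^2$-bookkeeping you flag: the termwise identity for $D_{(x,t)}F$ and the passage of the conditional expectation through the series hold a priori only on the domain of $D$, and the identification of $\BE[D_{(x,t)}F\mid\eta_t']$ for general $F\in L^2$ with the limit obtained by approximation requires an extra (standard, Mecke-equation-based) argument that you assert rather than carry out. None of this is a gap in the mathematics, but be aware that you are reconstructing the proof of the quoted results rather than shortening the paper's argument.
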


\begin{proof}
Let us use the short-hand notation $h(x,t):= \BE[ D_{(x,t)}F | \eta_t']$ for $(x,t)\in \mathbf{Y}\times[0,1]$.
We equip $\mathbf{Y}\times [0,1]$ with the order $(x_1,t_1)\prec (x_2,t_2)$ if and only if $t_1<t_2$ so that we are in the framework of Section 2 in \cite{LastPenrose2011}. Now  \cite[Theorem 2.1]{LastPenrose2011} implies that
$$
\int_0^1 \int \BE h(x,t)^2 \, \mu(dx) \, dt = \int_0^1 \int \BE[ \BE[ D_{(x,t)}F | \eta_t']^2]  \, \mu(dx) \, dt<\infty \quad \text{and} \quad F- \BE F = \delta(h),
$$
where $\delta$ is the so-called Kabanov-Skorohod integral (see the first display on p.\ 1591 in \cite{LastPenrose2011}). Then it follows from \cite[Corollary 2.7]{LastPenrose2011} that
$$
\V F = \BE \delta(h)^2 = \int_0^1 \int \BE h(x,t)^2 \, \mu(dx) \, dt,
$$
which completes the proof.
\end{proof}

\bigskip
\noindent
{\bf Acknowledgement:} We acknowledge the support of the German Science Foundation (DFG) through the research group ``Geometry and Physics of Spatial Random Systems'' (GPSRS, FOR 1548).

\end{document}